\DeclareMathOperator*{\argmin}{\arg\min}
\crefname{assumption}{Assumption}{Assumptions}
\title{Effective Front-Descent Algorithms with Convergence Guarantees\thanks{Submitted to the editors DATE.
\funding{The authors declare that no funds, grants, or other support were received during the preparation of this manuscript.}}}
\author{Matteo Lapucci\thanks{Department of Information Engineering, University of Florence, Florence, Italy (\email{matteo.lapucci@unifi.it}, \email{pierluigi.mansueto@unifi.it}, \email{davide.pucci@unifi.it}).}
\and Pierluigi Mansueto\footnotemark[2] \and Davide Pucci \footnotemark[2]}
\newcommand{\rev}[1]{{#1}}
\begin{document}

\maketitle

\begin{abstract}
In this manuscript, we address continuous unconstrained multi-objective optimization problems and we discuss descent type methods for the reconstruction of the Pareto set. Specifically, we analyze the class of Front Descent methods, which generalizes the Front Steepest Descent algorithm allowing the employment of suitable, effective search directions (e.g., Newton, Quasi-Newton, Barzilai-Borwein).
We provide a deep characterization of the behavior and the mechanisms of the algorithmic framework, and we prove that, under reasonable assumptions, standard convergence results and some complexity bounds hold for the generalized approach. Moreover, we prove that popular search directions can indeed be soundly used within the framework. Then, we provide a completely novel type of convergence results, concerning the sequence of sets produced by the procedure. In particular, iterate sets are shown to asymptotically approach stationarity for all of their points; the convergence result is accompanied by a worst-case iteration complexity bound; additionally, in finite precision settings, the sets are shown to only be enriched through exploration steps in later iterations, and suitable stopping conditions can be devised.
Finally, the results from a large experimental benchmark show that the proposed class of approaches far outperforms state-of-the-art methodologies.
\end{abstract}

\begin{keywords}
Multi-objective optimization; Nonconvex optimization; 
Pareto front; Descent algorithms; Global convergence 
\end{keywords}

\begin{MSCcodes}
90C29, 90C26, 90C30
\end{MSCcodes}

\section{Introduction}
In this paper, we are interested in \textit{multi-objective optimization} problems of the form
\begin{equation}
	\label{eq:mo_prob}
	\min_{x\in\mathbb{R}^n}\;F(x) = (f_1(x),\ldots,f_m(x))^\top,
\end{equation}
where $F:\mathbb{R}^n\to\mathbb{R}^m$ is a continuously differentiable function.
Plenty of real-world problems can be formalized as instances of this mathematical framework, see, e.g., \cite{handl2007multiobjective,jin2008pareto,liu2020review,rosso2020multi}; basically, this is the case of all tasks where the quality of a solution is measured based on more than a single criterion - i.e., many objective functions. Clearly, a solution which is simultaneously optimal for each individual objective function is unlikely to exist. For this reason, Pareto's theory is commonly brought into play to characterize the concept of optimality in this class of problems. For a thorough discussion on continuous multi-objective optimization, we refer the reader to \cite{eichfelder2021twenty}.

Pareto optimal solutions provide, in rough terms, unimprovable trade-offs among the objectives. The set of all Pareto optimal solutions is called the \textit{Pareto set} of the problem, and the image of this set through $F$ is referred to as \textit{Pareto front}. In applications, it is often preferable to leave to a knowledgeable decision maker the choice of the most suitable among the optimal trade-offs. In this perspective, we can understand the increasing interest towards algorithms capable of generating (a uniform and spread approximation of) the full Pareto front of multi-objective problems.

Standard approaches to tackle problems of the form \cref{eq:mo_prob} include scalarization and evolutionary algorithms \cite{deb2002fast}. However, both classes of methods have shortcomings: evolutionary algorithms, although flexible, have no theoretical guarantees and poor scalability \cite[Sec.\ 4.2]{lapucci2023memetic}; on the other hand, scalarization requires a careful choice of the weights \cite[Sec.\ 7]{fliege2009newton}, and without suitable regularity assumptions large parts of the front are often even unobtainable \cite[Sec.\ 4.1]{eichfelder2021twenty}.

A more recent family of approaches thus started drawing interest: \textit{descent methods}. Inspired by the seminal work of Fliege and Svaiter \cite{fliege2000steepest}, extensions of classical descent algorithms for single-objective optimization started being proposed \cite{fliege2009newton,gonccalves2022globally,lapucci2023limited,lucambio2018nonlinear,prudente2022quasi}. Initially, these algorithms were designed to produce a single efficient solution of the problem; steepest-descent type methodologies could therefore be employed to refine given solutions \cite{lapucci2023memetic}, or used in a multi-start fashion to produce Pareto front approximations. However, multi-starting single point algorithms does not allow in general to generate spread and uniform fronts. To overcome this limitation, in recent years descent methods have been proposed that explicitly handle a list of candidate solutions: the set of points is improved in a structured way, as a whole, at each iteration \cite{cocchi2020convergence,custodio2011direct,lapucci2023improved,liuzzi2016derivative,mohammadi2024trust}.

In this work, we introduce and discuss a broad class of such algorithms, which we call \textit{Front Descent (FD) methods}. Building upon and generalizing the simpler Front Steepest Descent (FSD) method \cite{cocchi2020convergence,lapucci2023improved}, the proposed algorithmic framework carries out, from each point in the current set of solutions, line searches along directions that are of descent for all or for a subset of objectives. This strategy ensures not only to improve the solutions, driving them towards stationarity, but also to move towards unexplored regions of the Pareto front.

The present work has multiple layers of contributions: first, we provide an improved theoretical characterization of the mechanisms of the base FSD method and, in particular, of the produced sequences of solutions. The new theoretical perspective opens up the definition of the much more general Front Descent framework, enabling the exploitation of various types of descent directions that have widely been shown to be effective in the single-point setting. For instance, the class of \textit{Newton-type directions} is particularly appealing \cite{fliege2009newton,gonccalves2022globally,lapucci2023limited,prudente2022quasi}. 

The general family of algorithms is deeply analyzed from the theoretical point of view: on the one hand, a simple condition on search directions allows to recover the same global convergence guarantees known in the literature for base FSD; moreover, some complexity bound is provided and, for specific choices of directions, we can also prove faster local convergence rates for some sequences of solutions.

On the other hand, under an additional, reasonable assumption on algorithm's implementation, we are able to obtain convergence properties related to any sequence of solutions and, most importantly, to the entire set of points iteratively updated by the procedure. This type of results is, to the best of our knowledge, completely novel in the literature, addressing a major open challenge in the field. The convergence result for the sequence of set of solutions is accompanied by a worst-case iteration complexity bound. All these theoretical findings further allow to provide a rigorous characterization of empirically observed behaviors of FD algorithms and also to properly define suitable stopping conditions for the method.

Finally, thorough numerical experiments show that the proposed methods can exhibit significantly superior performance than base FSD and other state-of-the-art methods for Pareto front reconstruction in continuous multi-objective optimization.

The rest of the paper is organized as follows. In \cref{sec:prelims}, we review the preliminary material needed to read the work; in particular, we recall the basic concepts of multi-objective optimization in \cref{sec:basics}, the front steepest descent algorithm in \cref{sec:fsd} and popular search directions for multi-objective optimization in \cref{sec:search_directions}. Then, we provide a discussion on linked sequences produced by FSD in \cref{sec:linkseq}. In \cref{sec:fda}, we describe the main algorithmic framework proposed in this work. Then, we provide a thorough theoretical analysis of the methodology in \cref{sec:convergence}: in \cref{sec:conv_linked} we give results concerning linked sequences produced by the algorithm, whereas in \cref{sec:conv_front} we carry out the novel analysis concerning the sequence of sets of solutions. Next, in \cref{sec::computational_experiments}, we show the results of the computational experiments assessing the effectiveness of the proposed class of methods. We finally give some concluding remarks in \cref{sec:conclusions}.
\section{Preliminaries}
\label{sec:prelims}
In the next subsections, we provide brief overviews on basic concepts in multi-objective optimization, the front steepest descent method and descent directions.

\subsection{Basic concepts in multi-objective optimization}
\label{sec:basics}
Descent algorithms in multi-objective optimization (MOO) are designed to solve instances of problem \cref{eq:mo_prob} according to the concepts of Pareto optimality. It is therefore useful to begin the discussion recalling some key definitions; first of all, we employ the standard \textit{partial ordering} on $\mathbb{R}^m$: $u\le v$ means $u_i\le v_i$ for all $i=1,\ldots,m$; similarly, $u< v$ if $u_i< v_i$ for all $i$; moreover we denote by $u \lneqq v$ when $u\le v$ and $u\neq v$.
This third relation allows to introduce the notion of \textit{dominance} between solutions of \cref{eq:mo_prob}: given $x, y\in\mathbb{R}^n$, we say that $y$ is \textit{dominated} by $x$ if $F(x)\lneqq F(y)$.

It is now possible to introduce the formal definitions of Pareto optimality.
\begin{definition}
	A point $\bar{x}\in\mathbb{R}^n$ is called 
	\begin{enumerate}[(a)]
		\item \textit{Pareto optimal} (or \textit{efficient}) for problem \cref{eq:mo_prob} if  $\nexists\;y\in\mathbb{R}^n$ s.t.\ $F(y)\lneqq F(\bar{x})$, i.e., $y$ dominates $\bar{x}$;
		\item \textit{weakly Pareto optimal} for problem \cref{eq:mo_prob} if  $\nexists\;y\in\mathbb{R}^n$ s.t.\ $F(y)< F(\bar{x})$.
	\end{enumerate}
\end{definition}
Clearly, Pareto optimality, which is a quite strong condition, implies weak Pareto optimality. Note that local versions of the above optimality concepts can also be given; in that case, the property has to hold in a neighborhood of $\bar{x}$. Under differentiability assumptions, a necessary condition for weak efficiency is \textit{Pareto-stationarity}.
\begin{lemma}
	Let $\bar{x}$ be a (local) weakly Pareto optimal point for problem \cref{eq:mo_prob}. Then, $\bar{x}$ is Pareto-stationary for problem \cref{eq:mo_prob}, i.e.,
	$\min\limits_{d\in\mathbb{R}^n} \max\limits_{j=1,\ldots,m}\nabla f_j(\bar{x})^\top d = 0.$
\end{lemma}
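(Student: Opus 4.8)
The plan is to prove the contrapositive-style statement directly: assume $\bar x$ is locally weakly Pareto optimal and show the optimal value of the min-max problem must be zero. First I would observe that $d=0$ is always feasible and gives $\max_j \nabla f_j(\bar x)^\top 0 = 0$, so the optimal value $\theta(\bar x) := \min_{d}\max_j \nabla f_j(\bar x)^\top d$ is always $\le 0$. Hence it suffices to rule out $\theta(\bar x) < 0$.

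Suppose, for contradiction, that there exists $\hat d\in\Re^n$ with $\max_{j=1,\ldots,m}\nabla f_j(\bar x)^\top \hat d < 0$, i.e.\ $\nabla f_j(\bar x)^\top \hat d < 0$ for every $j=1,\ldots,m$. Then I would invoke a first-order Taylor expansion of each $f_j$ along the ray $\bar x + t\hat d$: by differentiability,
\begin{equation*}
	f_j(\bar x + t\hat d) = f_j(\bar x) + t\,\nabla f_j(\bar x)^\top \hat d + o(t),
\end{equation*}
so that $f_j(\bar x + t\hat d) - f_j(\bar x) = t\bigl(\nabla f_j(\bar x)^\top\hat d + o(1)\bigr)$ as $t\downarrow 0$. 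Since $\nabla f_j(\bar x)^\top\hat d < 0$, there is $t_j>0$ such that $f_j(\bar x+t\hat d) < f_j(\bar x)$ for all $t\in(0,t_j]$.

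Taking $\bar t := \min_{j=1,\ldots,m} t_j > 0$ (a minimum over finitely many positive numbers, hence positive) yields a point $y := \bar x + \bar t\,\hat d$ with $f_j(y) < f_j(\bar x)$ for all $j$, i.e.\ $F(y) < F(\bar x)$. Moreover, by choosing $\bar t$ small enough we can also guarantee $y$ lies in any prescribed neighborhood of $\bar x$, so this contradicts (local) weak Pareto optimality of $\bar x$. Therefore no such $\hat d$ exists, meaning $\theta(\bar x)\ge 0$; combined with $\theta(\bar x)\le 0$ we conclude $\theta(\bar x)=0$. The only mildly delicate point is the uniform handling of the $o(t)$ terms across the $m$ objectives, but since $m$ is finite this is immediate by taking the minimum of the individual thresholds; there is no real obstacle here, as this is a classical first-order argument.
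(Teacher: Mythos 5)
Your proof is correct: it is the classical first-order argument (nonexistence of a common descent direction at a locally weakly efficient point via Taylor expansion, plus the observation that $d=0$ makes the min--max value nonpositive), and the finite-$m$ handling of the $o(t)$ remainders by taking the minimum of the thresholds $t_j$ is exactly the right way to close the gap. The paper states this lemma as a known preliminary without proof (it is the standard result from the steepest-descent literature, e.g.\ Fliege--Svaiter), so there is nothing to compare against; the only cosmetic remark is that your ad hoc $\theta(\bar x)$ omits the $\tfrac{1}{2}\|d\|^2$ term present in the paper's later definition of $\theta$, so you may want a different symbol to avoid a clash.
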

The three conditions - Pareto optimality, weak Pareto optimality and Pareto stationarity - become equivalent under strict convexity assumptions on $F$ \cite{fliege2009newton}.

Taking inspiration from traditional nonlinear programming, a key tool exploited to characterize continuous MOO problems and to define algorithms is that of (multi-objective) descent direction. \rev{Before stating the next definition, we further introduce the set relation $\subsetneq$: given two set $A$ and $B$, $A \subsetneq B$ if and only if $A \subset B$ and $A \ne B$.}

\begin{definition}
	Let $x\in\mathbb{R}^n$. A direction $d\in \mathbb{R}^n$ is referred to as a 
	\begin{itemize}
		\item \textit{common descent direction} at $x$ if, for all $j=1,\ldots,m$, $d$ is a descent direction for $f_j$ at $x$.
		\item \textit{partial descent direction} at $x$ if there exists $I\subsetneq\{1,\ldots,m\}$ such that, for all $j\in I$, $d$ is a descent direction for $f_j$ at $x$.
	\end{itemize}
\end{definition}
\begin{lemma}
	Let $x\in\mathbb{R}^n$ and $d\in\mathbb{R}^n$.
	\begin{enumerate}[(a)]
		\item If $\max\limits_{j=1,\ldots,m} \nabla f_j(x)^\top d < 0,$ then $d$ is a \textit{common descent direction} for $F$ at $x$. 
		\item If there exists $I\subsetneq \{1,\ldots,m\}$ such that $\max\limits_{j\in I} \nabla f_j(x)^\top d < 0$, then  $d$ is a \textit{partial descent direction} for $F$ at $x$.
	\end{enumerate}
\end{lemma}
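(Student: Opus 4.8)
The plan is to reduce both claims to the elementary single-objective fact that if $\nabla f_j(x)^\top d < 0$, then $d$ is a descent direction for $f_j$ at $x$, meaning there is some $\bar t_j>0$ with $f_j(x+td)<f_j(x)$ for all $t\in(0,\bar t_j]$. This in turn follows from the continuous differentiability of $F$: writing $f_j(x+td)=f_j(x)+t\,\nabla f_j(x)^\top d + o(t)$ as $t\to 0^+$, the difference quotient $\big(f_j(x+td)-f_j(x)\big)/t$ converges to $\nabla f_j(x)^\top d<0$, hence is strictly negative for all sufficiently small $t>0$, which yields the required threshold $\bar t_j$.

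For part (a), the hypothesis $\max_{j=1,\ldots,m}\nabla f_j(x)^\top d<0$ forces $\nabla f_j(x)^\top d<0$ for every $j\in\{1,\ldots,m\}$. Applying the observation above to each objective produces thresholds $\bar t_1,\ldots,\bar t_m>0$; thus $d$ is a descent direction for every $f_j$ at $x$, which is exactly the definition of a common descent direction. If a single uniform step threshold is desired, one takes $\bar t=\min_{j}\bar t_j>0$, which is positive since the minimum is over finitely many positive numbers.

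For part (b), let $I\subsetneq\{1,\ldots,m\}$ be the index set furnished by the hypothesis, so that $\max_{j\in I}\nabla f_j(x)^\top d<0$ and hence $\nabla f_j(x)^\top d<0$ for all $j\in I$. The same argument shows that $d$ is a descent direction for $f_j$ at $x$ for every $j\in I$; since $I$ is a proper subset of $\{1,\ldots,m\}$, this matches precisely the definition of a partial descent direction. There is no genuine obstacle in this proof; the only point requiring mild care is the quantifier structure in the definition of a descent direction (the step-size threshold may depend on the objective), which is dealt with by intersecting the finitely many intervals $(0,\bar t_j]$.
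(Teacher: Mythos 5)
Your proof is correct: the paper states this lemma without proof, as a standard preliminary fact, and your argument via the first-order expansion $f_j(x+td)=f_j(x)+t\,\nabla f_j(x)^\top d+o(t)$ applied objective-by-objective is exactly the canonical justification one would give. The remark about intersecting the finitely many thresholds $(0,\bar t_j]$ correctly handles the only subtlety in the quantifier structure.
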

If we denote by $J_F(x)$ the Jacobian of $F$, the above conditions can be rewritten in a compact way as $J_F(x)d<0$ and $(J_F(x)d)_I<0$, respectively. We can also denote by $\mathcal{D}(x,d)$ the quantity $\max_{j=1,\ldots,m}\nabla f_j(x)^\top d$, so that, letting $\boldsymbol{1}$ be the vector of all ones in $\mathbb{R}^m$, $J_F(x)d\le\boldsymbol{1}\mathcal{D}(x,d)$ and $\mathcal{D}(x,d)<0$ for a common descent direction.

It is easy to see that, at a Pareto-stationary point, there is no common descent direction; however, partial descent directions still might exist in that case. 
We shall now recall the definition of both common and partial \textit{steepest descent directions}.
\begin{definition}
	Let $x\in\mathbb{R}^n$. We have that
\end{definition}
\begin{itemize}
	\item the \textit{steepest common descent direction} \cite{fliege2000steepest} for $F$ at $x$ is defined by: 
	\begin{equation*}
		\label{eq::steepest_common}
		v(x) = \argmin\limits_{d\in\mathbb{R}^n}\max\limits_{j=1,\ldots,m} \nabla f_j(x)^\top d + \frac{1}{2}\|d\|^2;
	\end{equation*}
	\item the \textit{steepest partial descent direction} \cite{cocchi2021pareto,cocchi2020convergence} for $F$ at $x$ w.r.t.\ the subset of objectives with indices in $I\subsetneq \{1,\ldots,m\}$ is defined by:
	\begin{equation*}
		\label{eq::steepest_partial}
		v^I(x) = \argmin\limits_{d\in\mathbb{R}^n}\;\max\limits_{j\in I}\;\nabla f_j(x)^\top d + \frac{1}{2}\|d\|^2.
	\end{equation*}
\end{itemize}
Note that we are allowed to use the equality sign in the above definitions thanks to the uniqueness of the solution for the problems (whose objective function is strongly convex and continuous). 

The optimal value of the steepest descent direction problems can be seen as a  measure for (approximate) stationarity of solutions. We are thus interested in also defining the functions $\theta:\mathbb{R}^n\to\mathbb{R}$ and $\theta^I:\mathbb{R}^n\to\mathbb{R}$ as
$$\theta(x) = \min\limits_{d\in\mathbb{R}^n}\max\limits_{j=1,\ldots,m} \nabla f_j(x)^\top d + \frac{1}{2}\|d\|^2,\qquad \theta^I(x)=\min\limits_{d\in\mathbb{R}^n}\;\max\limits_{j\in I}\;\nabla f_j(x)^\top d + \frac{1}{2}\|d\|^2.$$
Of course, $v^I$ and $\theta^I$ coincide with $v$ and $\theta$ if $I=\{1,\ldots,m\}$. A fundamental property is that mappings $v^I(x)$ and $\theta^I(x)$ are continuous \cite{fliege2000steepest}. Moreover, a point $\bar{x}$ is Pareto-stationary if and only if $\theta(\bar{x})=0$ and $v(\bar{x})= 0$. Also, note that the value of $\theta(x)$ is always nonpositive and that $\mathcal{D}(x,v(x))= -\|v(x)\|^2$ (see, e.g., \cite[Eq.\ (17)]{gonccalves2022globally}).

\subsection{Front Steepest Descent}
\label{sec:fsd}
In this section we describe the prototypical \textit{Front Descent} type algorithm: the \textit{Front Steepest Descent (FSD)} \cite{cocchi2020convergence}. The FSD has been designed to construct an approximation of the Pareto front, which is the set of all Pareto efficient points; obviously, in the nonconvex case, this goal is not reasonably obtainable, similarly as it is not reasonable to ask for certified global optima in the single-objective case. Thus, the actual aim will be that of constructing a front of mutually non-dominated Pareto-stationary points as spread and uniform as possible.

The conceptual scheme of the method can be seen in \cref{alg::improved_FSD}. The pseudocode refers to the perfected version of the algorithm discussed in \cite{lapucci2023improved}, with a couple of minor modifications that will be addressed later in this section. 

\begin{algorithm}[htbp]
	\caption{\texttt{FrontSteepestDescent}} 
	\label{alg::improved_FSD}
	\begin{algorithmic}[1]
		\STATE{Input: $F:\mathbb{R}^n \rightarrow \mathbb{R}^m$, $X^0$ set of mutually nondominated points w.r.t.\ $F$, $\alpha_0>0,$ $\delta\in(0,1),\gamma\in(0,1)$.}
		\STATE{$k = 0$}
		\WHILE{a stopping criterion is not satisfied}
			\STATE{$\hat{X}^k = X^k$}
			\FORALL{$x_c\in X^k$}
				\IF{$x_c \in \hat{X}^k$ \label{step:x_c}}
					\IF{$\theta(x_c)<0$ \label{step.if_cond}}
						\STATE{$\alpha_c^k = \max\limits_{h\in\mathbb{N}} \{\alpha_0\delta^h\mid F(x_c+\alpha_0\delta^h v(x_c))\le F(x_c)+\boldsymbol{1}\gamma\alpha_0\delta^h\mathcal{D}(x_c, v(x_c))\}$\label{step:line_search}}
					\ELSE
						\STATE{$\alpha_c^k=0$ \label{step:end_if_else}}
					\ENDIF
					\STATE{$z_c^k = x_c+\alpha_c^kv(x_c)$ \label{step:z}}
					\STATE{$\hat{X}^k = \hat{X}^k \setminus \left\{y \in \hat{X}^k \mid F(z^k_c) \lneqq F(y)\right\} \cup \left\{z^k_c\right\}$ \label{step:add_zk}}
					\FORALL{$I\subsetneq\{1,\ldots,m\}$ s.t.\ $\theta^I(z_c^k) < 0$ \label{step:inner_for}}
						\IF{$z_c^k\in\hat{X}^k$}
							\STATE{$\alpha_c^I$ = $\max\limits_{h\in\mathbb{N}} \{\alpha_0\delta^h\mid \forall y\in\hat{X}^k,\;\exists\, j \text{ s.t. } f_j(z_c^k+\alpha_0\delta^h v^I(z_c^k)) < f_j(y)\}$ \label{step:second_ls}}
							\STATE{$\hat{X}^k = \hat{X}^k \setminus \left\{y \in \hat{X}^k \mid F\left(z_c^k + \alpha_c^I v^I(z_c^k)\right) \lneqq F(y)\right\} \cup \left\{z_c^k + \alpha_c^I v^I(z_c^k)\right\} $ \label{step:insert_partial}}
						\ENDIF
					\ENDFOR
				\ENDIF	
			\ENDFOR
			\STATE{$X^{k + 1} = \hat{X}^k$}
			\STATE{$k = k + 1$}
		\ENDWHILE
		\RETURN $X^k$
	\end{algorithmic}
\end{algorithm}

As we can infer by its name, the approach makes use of steepest descent directions; the algorithm handles a list of solutions $X^k$, which is updated, at each iteration, as follows.
\begin{itemize}[-]
	\item Sequentially, each point $x_c$ in the current set $X^k$ undergoes a \textit{two-phase} optimization procedure.  
	\item The first phase (steps \ref{step:x_c}-\ref{step:add_zk}), that can be interpreted as a \textit{refinement step}, consists in a single-point steepest descent step starting at $x_c$; we obtain a new point $z_c^k$ that provides a sufficient decrease of all the objective functions, unless $x_c$ itself is already Pareto-stationary; in the latter case, $z_c^k$ is set equal to $x_c$. 
	\item The second phase (steps \ref{step:inner_for}-\ref{step:insert_partial}) aims at \textit{exploring} the objectives space and possibly enriching the set $X^k$ with new nondominated points; in this part, line searches are carried out starting at $z^k_c$ along steepest partial descent directions, considering subsets of objectives such that $\theta^I(z_c^k)<0$. The line search for this phase looks at the entire (updated) set of points $\hat{X}^k$: the stepsize is accepted as soon as it leads to a point which is not dominated by any other point in the current list of solutions. 
	\item Each time a new point is added to the list, a filtering procedure is carried out to remove all points that become dominated. 
\end{itemize}

We notice that, differently than \cite{lapucci2023improved}, at step \ref{step:inner_for} we only consider proper subsets of $\{1,\ldots,m\}$; then, as a second modification, the sufficient decrease condition in the line search at step \ref{step:line_search} employs quantity $\mathcal{D}(x_c,v(x_c))$ instead of $\theta(x_c)$. 
By very simple patches to the proofs, which we do not report here for the sake of brevity, it can be easily seen that these changes
do not alter any theoretical property known for \cref{alg::improved_FSD}: all steps are well defined and the overall procedure enjoys asymptotic convergence properties. \rev{We shall also note that considering all possible subsets of objectives at steps \ref{step:inner_for}-\ref{step:insert_partial} might turn out expensive in practice when $m$ gets high; still, the convergence results would again not be altered if only some subsets were considered, as they are mainly driven by the refinement steps.} Here below, we summarize these theoretical results.

\begin{lemma}[{\cite[ Prop.\ 3.1]{lapucci2023improved}}]
	\label{lemma:ls}
	Instruction \ref{step:line_search} of \cref{alg::improved_FSD} is well defined.
\end{lemma}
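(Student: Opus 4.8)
The plan is to reduce the claim to the classical argument establishing well-definedness of an Armijo line search, adapted to the multi-objective sufficient-decrease condition. The quantity to be defined is $\max_{h\in\mathbb{N}}$ over the set of $\alpha_0\delta^h$ satisfying the test in step \ref{step:line_search}; since that set is contained in the geometric grid $\{\alpha_0\delta^h : h\in\mathbb{N}\}$, which is bounded above by $\alpha_0$ and whose elements are strictly decreasing in $h$, nonemptiness of the set automatically yields the existence of the maximum — it corresponds to the smallest admissible index $h$, which exists by well-ordering of $\mathbb{N}$. Hence the whole proof boils down to exhibiting at least one admissible $h$.

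To do so, I would first observe that step \ref{step:line_search} is executed only when $\theta(x_c)<0$; by the characterization recalled in \cref{sec:basics}, this means $v(x_c)\neq 0$ and $\mathcal{D}(x_c,v(x_c)) = -\|v(x_c)\|^2 < 0$, so that, writing $d := v(x_c)$, we have $\nabla f_j(x_c)^\top d \le \mathcal{D}(x_c,d) < 0$ for every $j=1,\ldots,m$; in particular $d$ is a common descent direction. Next, fix an index $j$ and use continuous differentiability of $f_j$ to write, for $\alpha>0$,
\[
f_j(x_c+\alpha d) = f_j(x_c) + \alpha\,\nabla f_j(x_c)^\top d + o(\alpha),\qquad \alpha\to 0^+.
\]
The $j$-th component of the test in step \ref{step:line_search} is $f_j(x_c+\alpha d)\le f_j(x_c)+\gamma\alpha\mathcal{D}(x_c,d)$, which, substituting the expansion and dividing by $\alpha>0$, is equivalent to
\[
\frac{o(\alpha)}{\alpha}\ \le\ \gamma\,\mathcal{D}(x_c,d) - \nabla f_j(x_c)^\top d .
\]
Since $\nabla f_j(x_c)^\top d \le \mathcal{D}(x_c,d)$ and $\gamma\in(0,1)$, the right-hand side is bounded below by $(\gamma-1)\mathcal{D}(x_c,d) = (1-\gamma)\|d\|^2 > 0$, a positive quantity independent of $j$. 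As $o(\alpha)/\alpha\to 0$ for $\alpha\to 0^+$, there is $\bar\alpha>0$ such that the displayed inequality holds for all $\alpha\in(0,\bar\alpha]$ and all $j$ (finitely many indices). Choosing $\bar h\in\mathbb{N}$ with $\alpha_0\delta^{\bar h}\le\bar\alpha$ — possible since $\delta\in(0,1)$ — then shows that $h=\bar h$, and every larger $h$, is admissible, so the set in step \ref{step:line_search} is nonempty and, by the remark above, its maximum is attained.

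I do not expect any genuine obstacle here: the argument is the textbook Armijo proof, with the single-objective descent slope replaced by the uniform negative bound $\mathcal{D}(x_c,d)$ on all directional derivatives, and with finiteness of the number of objectives used to pass from a per-$j$ threshold to a common one. The only point requiring a little care — but not real difficulty — is making sure the maximum is well-defined rather than merely a supremum, which follows from the discreteness and strict monotonicity of the geometric stepsize grid.
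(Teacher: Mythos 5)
Your proof is correct and takes essentially the same route as the paper, which simply delegates to the standard Armijo well-definedness result (citing \cite[Prop.~3.1]{lapucci2023improved}, itself resting on \cite[Lemma 4]{fliege2000steepest}); you have written that textbook argument out in full, correctly using $\mathcal{D}(x_c,v(x_c))=-\|v(x_c)\|^2<0$ under the guard $\theta(x_c)<0$ to handle the paper's modified sufficient-decrease condition, and the well-ordering of the geometric stepsize grid to turn the supremum into an attained maximum.
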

\begin{lemma}[{\cite[Prop.\ 3.2-3.3]{lapucci2023improved}}]
	\label{lemma:prelims}
	Let $X^k$ be a set of mutually nondominated points. Then:
	\begin{enumerate}[(a)]
		\item for the entire iteration $k$, $\hat{X}^k$ is a set of mutually nondominated points;
		\item the line search at step \ref{step:second_ls} of \cref{alg::improved_FSD} is always well-defined;
		\item $X^{k+1}$ is a set of mutually nondominated solutions.
	\end{enumerate}
\end{lemma}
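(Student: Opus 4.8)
The plan is to establish \textit{(a)} and \textit{(b)} together, by induction on the finite sequence of modifications that $\hat X^k$ undergoes during iteration $k$, and then to deduce \textit{(c)} immediately. The invariant to maintain is: \emph{at every stage of iteration $k$, the current $\hat X^k$ is a finite set of mutually nondominated points and, whenever step \ref{step:second_ls} is executed, the associated line search is well defined.} The base case is trivial, since at the start $\hat X^k = X^k$, which is finite (all the $X^k$ are, by induction on $k$ from the finite $X^0$) and mutually nondominated by hypothesis. Granting the invariant, \textit{(c)} follows at once, as $X^{k+1}$ is the value of $\hat X^k$ at the end of iteration $k$. The only operations that modify $\hat X^k$ are the update at step \ref{step:add_zk} (refinement phase) and the update at step \ref{step:insert_partial} (exploration phase); each first deletes exactly the points strictly dominated by the newly computed point and then inserts that point, so that surviving pairs stay mutually nondominated automatically, and the single nontrivial fact to check is that the inserted point is not dominated by any survivor.

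For the update at step \ref{step:add_zk}: if $\theta(x_c)\ge 0$ then $z_c^k = x_c$, and since the guard at step \ref{step:x_c} gives $x_c\in\hat X^k$ while the inductive hypothesis gives that $x_c$ is nondominated in $\hat X^k$, one checks that $\{y\in\hat X^k\mid F(x_c)\lneqq F(y)\}=\emptyset$ and the set is left unchanged. If instead $\theta(x_c)<0$, then $v(x_c)\ne 0$, hence $\mathcal D(x_c,v(x_c))=-\|v(x_c)\|^2<0$; feeding this into the acceptance rule of step \ref{step:line_search} --- well posed by \cref{lemma:ls}, with $\alpha_c^k>0$ --- yields $F(z_c^k)\le F(x_c)+\boldsymbol{1}\,\gamma\alpha_c^k\mathcal D(x_c,v(x_c))< F(x_c)$, so $z_c^k$ dominates $x_c$ and $x_c$ is deleted. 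Were some $y\in\hat X^k$ with $F(y)\lneqq F(z_c^k)$ to survive, transitivity would give $F(y)\lneqq F(x_c)$, contradicting the nondominance of $x_c$ in $\hat X^k$. Hence $\hat X^k$ remains a finite mutually nondominated set.

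For the well-definedness of step \ref{step:second_ls}: when that step is reached we have $z_c^k\in\hat X^k$ (guard), $\hat X^k$ finite and mutually nondominated (inductive hypothesis), and $\theta^I(z_c^k)<0$, which forces $\max_{j\in I}\nabla f_j(z_c^k)^\top v^I(z_c^k)=\theta^I(z_c^k)-\tfrac12\|v^I(z_c^k)\|^2<0$, so some $\bar\jmath\in I$ satisfies $\nabla f_{\bar\jmath}(z_c^k)^\top v^I(z_c^k)<0$. It then suffices to show the set $\{h\in\mathbb N\mid \forall y\in\hat X^k,\ \exists j,\ f_j(z_c^k+\alpha_0\delta^h v^I(z_c^k))<f_j(y)\}$ is nonempty: for $y=z_c^k$ the index $\bar\jmath$ works for all small enough stepsizes; for $y\ne z_c^k$, mutual nondominance means $y$ does not dominate $z_c^k$, so either $f_j(z_c^k)<f_j(y)$ for some $j$ (a strict inequality preserved for small stepsizes by continuity of $F$) or $F(y)=F(z_c^k)$ (in which case $\bar\jmath$ again works). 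Finiteness of $\hat X^k$ lets one pick a single sufficiently small stepsize valid for all $y$, so the set is nonempty and, being a subset of $\mathbb N$, has a minimum --- giving the maximizing stepsize and the claimed well-definedness. Moreover, the acceptance condition for the returned $\alpha_c^I$ states exactly that $w:=z_c^k+\alpha_c^I v^I(z_c^k)$ has no $y\in\hat X^k$ with $F(y)\le F(w)$, so a fortiori $w$ is dominated by no point of $\hat X^k$, and in particular by no survivor of the deletion at step \ref{step:insert_partial}; thus that update preserves the invariant as well. This closes the induction, proving \textit{(a)} and \textit{(b)}, and \textit{(c)} follows as noted.

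The main obstacle is not a computation but the bookkeeping: one must thread the invariant through \emph{every} modification within an iteration and isolate the single non-automatic point --- that the just-inserted point is undominated by the survivors --- which in the refinement phase rests on the Armijo decrease plus transitivity plus the inductive hypothesis on $\hat X^k$, and in the exploration phase is precisely what the line-search test at step \ref{step:second_ls} guarantees. One also has to handle the degenerate cases (Pareto-stationary $x_c$, so $z_c^k=x_c$; points with identical objective vectors, which are mutually nondominated under $\lneqq$ but not under $\le$; the guards $x_c\in\hat X^k$ and $z_c^k\in\hat X^k$ preventing operations on already-deleted points), but these are routine once the invariant is phrased carefully.
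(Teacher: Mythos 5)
Your proof is correct, and it follows essentially the same route as the source: the paper itself does not reprove this lemma but imports it from \cite[Prop.\ 3.2--3.3]{lapucci2023improved}, whose argument is exactly the invariant you maintain --- each insertion/deletion preserves mutual nondominance because the refinement point strictly dominates $x_c$ (Armijo decrease plus transitivity) and the exploration point is accepted only when undominated by the whole current list, while termination of the exploration line search follows from $\theta^I(z_c^k)<0$ together with the nondominance of $\hat X^k$. Your handling of the degenerate cases (stationary $x_c$, equal objective vectors under $\lneqq$, the membership guards) is sound.
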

The above result\rev{s} thus mean that the algorithm is sound, as long as the initial set $X^0$ contains nondominated solutions.
Another useful property can be stated.
\begin{lemma}[ {\cite[Lemma 3.1]{lapucci2023improved}}]
	\label{lemma:lemma}
	After step \ref{step:add_zk} of \cref{alg::improved_FSD}, $z^k_c$ belongs to $\hat{X}^k$. Moreover, for all $\tilde{k}>k$, there exists $y\in X^{\tilde{k}}$ such that $F(y)\le F(z_c^k)$.
\end{lemma}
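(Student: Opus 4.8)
The first assertion should follow immediately from the form of the update at step~\ref{step:add_zk}. That step overwrites $\hat X^k$ with $\bigl(\hat X^k\setminus\{y\in\hat X^k\mid F(z^k_c)\lneqq F(y)\}\bigr)\cup\{z^k_c\}$; since $\lneqq$ is irreflexive, $z^k_c$ cannot belong to the removed set, and it is in any case re-inserted by the final union, so $z^k_c\in\hat X^k$ right after step~\ref{step:add_zk}. I would spend essentially one line on this.

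For the second assertion, my plan is to identify an invariant of the algorithm's ``insert-and-filter'' routine and propagate it forward. Define the predicate $P$: \emph{the working set of solutions currently in hand contains a point $y$ with $F(y)\le F(z^k_c)$}. By the first part, $P$ holds immediately after step~\ref{step:add_zk}, with witness $z^k_c$. The key structural remark is that, from that instant on, the working set is only ever changed in two ways: by the relabelings $\hat X^j = X^j$ and $X^{j+1} = \hat X^j$, which leave its contents intact, and by operations of the form $\hat X\leftarrow(\hat X\setminus\{y\mid F(w)\lneqq F(y)\})\cup\{w\}$ for a freshly generated point $w$ --- this covers both step~\ref{step:add_zk} and step~\ref{step:insert_partial}, in the remainder of iteration $k$ and in all subsequent iterations.

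I would then verify that such an ``insert-and-filter'' operation preserves $P$: if the current witness $y$ survives the filtering, $P$ is immediate; otherwise $F(w)\lneqq F(y)$, hence $F(w)\le F(y)\le F(z^k_c)$ by transitivity of $\le$, and the newly added point $w$ becomes the new witness. Since each iteration performs only finitely many such operations (at most one per element of the working set in the refinement phase, plus at most $2^m-1$ per element in the exploration phase), an induction along the finite chain of modifications occurring between step~\ref{step:add_zk} and the start of iteration $\tilde k$ yields that $X^{\tilde k}$ satisfies $P$, which is exactly the claim; the same argument shows $P$ persists throughout iteration $\tilde k$ as well.

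I expect no genuine obstacle here, as the proof is essentially bookkeeping. The only points needing a moment's care are the irreflexivity of $\lneqq$ (so that a newly inserted point never deletes itself, ensuring both the base case and the preservation step go through) and the transitivity of $\le$ in the chaining argument, together with being exhaustive about the ways the solution list can be altered --- in particular not overlooking the exploration-phase updates at step~\ref{step:insert_partial} and the relabelings across iterations.
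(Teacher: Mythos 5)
Your proof is correct and is essentially the argument this result rests on: the paper itself does not reproduce a proof (it defers to \cite[Lemma 3.1]{lapucci2023improved} and later reuses that proof verbatim for \cref{lemma:lemma_N}), and that argument is exactly your invariant --- a point is only ever removed from the working set when a dominating point is simultaneously inserted, so the dominance chain from $z_c^k$ propagates through every insert-and-filter operation and across the relabelings between iterations. Your attention to the irreflexivity of $\lneqq$ and the exhaustive enumeration of the ways the list can be modified covers all the bookkeeping that is needed.
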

Next, in order to characterize the convergence properties of the algorithm, we need to resort to the concept of linked sequence \cite{liuzzi2016derivative}.
\begin{definition}
	Let $\left\{X^k\right\}$ be the sequence of sets of nondominated points produced
	by \cref{alg::improved_FSD}. We define a \textit{linked sequence} as a sequence $\left\{x_{j_k}\right\}$ such that, for any $k=1,2,\ldots$, the point $x_{j_k}\in X^k$ is generated at iteration $k-1$ of \cref{alg::improved_FSD} while processing the point $x_{j_{k-1}}\in X^{k-1}$.
\end{definition}
We are now able to recall the main convergence result.
\begin{proposition}[{\cite[Prop.\ 3.4]{lapucci2023improved}}]
	Let $X^0$ be a set of mutually nondominated points and $x_0\in X^0$ be a point s.t. the set $\mathcal{L}(x_0) = \bigcup_{j=1}^{m}\left\{x\in\mathbb{R}^n\mid f_j(x)\le f_j(x_0)\right\}$ is compact.
	Let $\left\{X^k\right\}$ be the sequence of sets of nondominated points produced by \cref{alg::improved_FSD}. Let $\left\{x_{j_k}\right\}$ be a linked sequence, then it admits accumulation points and every accumulation point is Pareto-stationary for problem \cref{eq:mo_prob}.
\end{proposition}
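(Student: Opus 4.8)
The plan is to follow the standard line-search convergence template adapted to the front-descent setting, relying heavily on \cref{lemma:lemma} to track a well-behaved ``tail'' of the linked sequence. First I would argue that the linked sequence $\{x_{j_k}\}$ admits accumulation points: by \cref{lemma:lemma}, for every index $k$ there is a point $y \in X^{k}$ with $F(y) \le F(z_{c}^{k-1})$, and since each $z_c^{k}$ is produced by either a descent step from $x_{j_{k-1}}$ or by setting $z_c^{k}=x_{j_{k-1}}$, the objective values along the linked sequence are monotonically non-increasing componentwise; hence $F(x_{j_k}) \le F(x_{j_0}) \le F(x_0)$ for all $k$ (after noting $x_{j_0}$ can be taken in $\mathcal{L}(x_0)$, or chaining through the dominance), so the whole linked sequence eventually lies in the compact sublevel set $\mathcal{L}(x_0)$ and thus has a convergent subsequence $\{x_{j_k}\}_{k\in K} \to \bar x$.

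Next I would establish Pareto-stationarity of $\bar x$ by contradiction: suppose $\theta(\bar x) < 0$. The key consequence of the componentwise-monotone, bounded-below sequence $\{F(x_{j_k})\}$ is that $f_\ell(x_{j_{k+1}}) - f_\ell(x_{j_k}) \to 0$ for every $\ell$. Now split into two cases according to whether infinitely many iterates $x_{j_k}$ ($k\in K$) coincide with their successor via the ``$\alpha_c^k = 0$'' branch — i.e.\ whether $\theta(x_{j_k}) = 0$ — or whether infinitely many undergo a genuine steepest-descent step. In the first case, continuity of $\theta$ immediately forces $\theta(\bar x) = 0$, contradicting the assumption. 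In the second case, restrict $K$ to those indices; then by \cref{lemma:lemma} the successor $z_{j_k}^{k} = x_{j_k} + \alpha_{j_k}^{k} v(x_{j_k})$ satisfies the Armijo-type inequality from step~\ref{step:line_search}, which together with $\mathcal{D}(x_{j_k}, v(x_{j_k})) = -\|v(x_{j_k})\|^2$ yields, for at least one coordinate and hence for the max,
\begin{equation*}
	f_\ell(x_{j_{k+1}}) - f_\ell(x_{j_k}) \le \gamma\, \alpha_{j_k}^{k}\, \mathcal{D}(x_{j_k}, v(x_{j_k})) = -\gamma\, \alpha_{j_k}^{k}\, \|v(x_{j_k})\|^2 \le 0
\end{equation*}
for some $\ell$ depending on $k$; since the left side goes to $0$ and, by continuity of $v$ and $\theta$, $\|v(x_{j_k})\| \to \|v(\bar x)\| > 0$, we conclude $\alpha_{j_k}^{k} \to 0$ along $K$.

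Then I would run the classical Armijo back-tracking argument: since $\alpha_{j_k}^{k} \to 0$, for $k$ large the reduced step $\alpha_{j_k}^{k}/\delta$ was rejected, meaning there exists an index $j(k)$ with
\begin{equation*}
	f_{j(k)}\!\left(x_{j_k} + \tfrac{\alpha_{j_k}^{k}}{\delta} v(x_{j_k})\right) > f_{j(k)}(x_{j_k}) + \gamma\, \tfrac{\alpha_{j_k}^{k}}{\delta}\, \mathcal{D}(x_{j_k}, v(x_{j_k})).
\end{equation*}
Passing to a further subsequence so that $j(k) \equiv \hat\jmath$ is constant, dividing by $\alpha_{j_k}^{k}/\delta$, applying the mean value theorem, and taking the limit $k\to\infty$ (using $x_{j_k}\to\bar x$, $v(x_{j_k})\to v(\bar x)$, continuity of $\nabla f_{\hat\jmath}$, and $\alpha_{j_k}^{k}/\delta\to 0$), I get $\nabla f_{\hat\jmath}(\bar x)^\top v(\bar x) \ge \gamma\, \mathcal{D}(\bar x, v(\bar x)) = -\gamma\|v(\bar x)\|^2$. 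But $\nabla f_{\hat\jmath}(\bar x)^\top v(\bar x) \le \mathcal{D}(\bar x, v(\bar x)) = -\|v(\bar x)\|^2$, so $-\|v(\bar x)\|^2 \ge -\gamma\|v(\bar x)\|^2$, i.e.\ $(1-\gamma)\|v(\bar x)\|^2 \le 0$, forcing $v(\bar x) = 0$ and hence $\theta(\bar x) = 0$ — the desired contradiction.

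The main obstacle I anticipate is the bookkeeping around the linked-sequence successor relation: the point $x_{j_{k+1}}\in X^{k+1}$ need not literally be $z_{j_k}^{k}$ (it could be a point obtained from $z_{j_k}^{k}$ in the partial-descent exploration phase, or $z_{j_k}^{k}$ itself might have been created and then is guaranteed only by \cref{lemma:lemma} to be ``covered'' by some later point). I would handle this by working with $z_{j_k}^{k}$ directly in the inequalities — since $F(z_{j_k}^{k}) \le F(x_{j_k}) + \boldsymbol 1 \gamma \alpha_{j_k}^k \mathcal D(x_{j_k},v(x_{j_k}))$ holds by construction and, crucially, $F(x_{j_{k+1}}) \le F(z_{j_k}^k)$ by the definition of linked sequence together with the filtering step and \cref{lemma:lemma} — and only at the end invoke that the linked-sequence values interpolate between these, so the telescoping/monotonicity and the $\alpha\to 0$ conclusion still go through. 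The partial-descent exploration steps do not interfere because they only ever add points that are not dominated, never increasing any objective value along the tracked chain.
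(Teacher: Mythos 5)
The linchpin of your argument --- that $F$ is componentwise non-increasing along a linked sequence --- is false, and the proof breaks exactly where you rely on it. The successor $x_{j_{k+1}}$ of $x_{j_k}$ need not be the refinement point $z_{j_k}^k$: it may be a point $z_{j_k}^k+\alpha_c^I v^I(z_{j_k}^k)$ produced in the exploration phase (steps \ref{step:inner_for}--\ref{step:insert_partial}), and the line search at step \ref{step:second_ls} only requires the new point to be nondominated by the current list, not to decrease every objective. Since $v^I$ is a descent direction only for the objectives indexed by $I\subsetneq\{1,\ldots,m\}$, the remaining objectives typically \emph{increase} along such a step --- that is the very purpose of exploration. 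Consequently neither $F(x_{j_{k+1}})\le F(z_{j_k}^k)$ nor $f_\ell(x_{j_{k+1}})-f_\ell(x_{j_k})\to 0$ holds in general (\cref{lemma:lemma} gives a point $y\in X^{\tilde k}$ with $F(y)\le F(z_{j_k}^k)$, but that $y$ is in general \emph{not} the linked-sequence successor), so your derivation of $\alpha_{j_k}^k\to 0$ is unsupported. For the same reason, the existence of accumulation points cannot be deduced from monotonicity of $F$ along the chain; it follows instead from the fact that every point of every $X^k$ lies in $\mathcal{L}(x_0)$, proved by combining \cref{lemma:lemma} with the mutual nondominance of $X^k$ (cf.\ \cref{lemma:compact_set}).

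The missing idea --- which is how the paper proves the analogous statement for \cref{alg::F-Newton} --- is to get $\bar\alpha=0$ without any monotonicity along the chain. Fix the convergent subsequence $K_1$ with $x_{j_k}\to\bar x$ and $z_{j_k}^k\to\bar z$, let $k_1(k)$ be the next index of $K_1$ after $k$, and take the point $y\in X^{k_1(k)}$ with $F(y)\le F(z_{j_k}^k)$ provided by \cref{lemma:lemma}. Since $y$ and $x_{j_{k_1(k)}}$ both belong to the mutually nondominated set $X^{k_1(k)}$, there is a coordinate $h(k)$ with $f_{h(k)}(x_{j_{k_1(k)}})\le f_{h(k)}(y)\le f_{h(k)}(z_{j_k}^k)$. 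Passing to the limit along a further subsequence on which $h(k)\equiv h$, and using $x_{j_{k_1(k)}}\to\bar x$, yields $f_h(\bar x)\le f_h(\bar z)\le f_h(\bar x)-\gamma\bar\alpha\|v(\bar x)\|^2$, whence $\bar\alpha=0$. With that substitution, your concluding Armijo backtracking contradiction is sound and coincides with the paper's.
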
 

\subsection{Search directions} \label{sec:search_directions}
Taking inspiration from the scalar optimization case, several types of search directions have been considered to design iterative algorithms for multi-objective optimization.

Of course, the steepest descent direction $v(x)$ is the most straightforward choice; in fact, the steepest descent direction can be seen as a special case of a more general framework: given symmetric positive definite matrices $B_1,\ldots,B_m$, we define
\begin{gather}
	\label{eq:newt-type-dirs}
	\theta_N(x)=\min_{d\in\mathbb{R}^n}\max_{j=1,\ldots,m} \nabla f_j(x)^\top d + \frac{1}{2}d^\top B_j d ,\qquad v_N(x) = \argmin_{d\in\mathbb{R}^n}\max_{j=1,\ldots,m} \nabla f_j(x)^\top d + \frac{1}{2}d^\top B_j d.
\end{gather}
If we set $B_1=\ldots=B_m=I$, we immediately get back $\theta(x)$ and $v(x)$; if, on the other hand, $B_j = \nabla^2f_j(x)$ for all $j=1,\ldots,m$ - assuming $f$ is twice differentiable - we retrieve multi-objective Newton's direction \cite{fliege2009newton}. Note that it can be easily proved that $\theta_N(x) \le \mathcal{D}(x,v_N(x))/2$, regardless of the definition of the matrices $B_1,\ldots,B_m$ (see, e.g., the proof of \cite[Theorem 5]{gonccalves2022globally}). In general, directions of the form $v_N(x)$ are referred to as \textit{Newton-type} directions in MOO \cite{gonccalves2022globally}, including Quasi-Newton \cite{povalej2014quasi,prudente2022quasi} and limited memory QN directions \cite{lapucci2023limited}. 

As an alternative, \textit{conjugate gradient} methods \cite{lucambio2018nonlinear} define the search direction $d_k$ at a given iteration $k$ as $d_k = v(x^k)+\beta_kd_{k-1}$.
Moreover, a Barzilai-Borwein method has recently been proposed \cite{chen2023barzilai}, where the descent direction is obtained solving
\begin{gather}
	\label{eq:bb_direction}
	\theta_a(x)=\min_{d\in\mathbb{R}^n}\max_{j=1,\ldots,m} \frac{\nabla f_j(x)^\top d}{a_j} + \frac{1}{2}\|d\|^2,\qquad v_a(x) = \argmin_{d\in\mathbb{R}^n}\max_{j=1,\ldots,m} \frac{\nabla f_j(x)^\top d}{a_j} + \frac{1}{2}\|d\|^2,
\end{gather}
where scalars $0<a_{\text{min}}\le a_1,\ldots,a_m\le a_{\text{max}}$ are used to rescale the gradients.

In general, search directions shall satisfy some conditions to ensure that the corresponding descent algorithm enjoys convergence guarantees. A rather simple condition to check is that directions are steepest-descent-related \cite{lapucci2024convergence}.

\section{Characterization of Linked Sequences}
\label{sec:linkseq}
In the definition of \cref{alg::improved_FSD} we made a slight change w.r.t.\ \cite{lapucci2023improved}, which is not substantial in practice and does not spoil the convergence properties either. 
The modification concerns the exploration phase (steps \ref{step:inner_for}-\ref{step:insert_partial}), where the improper subset $I=\{1,\ldots,m\}$ is not employed.

This apparently irrelevant change actually allows us to easily get an enhanced characterization of the linked sequences of points generated by the FSD method. In fact, we will classify linked sequences as either \textit{refining linked sequences} or \textit{exploring linked sequences}, according to the following definition.

\begin{definition}
	Let $\left\{X^k\right\}$ be the sequence of sets of nondominated points produced
	by \cref{alg::improved_FSD} and let  $\left\{x_{j_k}\right\}$ be a linked sequence. If there exists $\bar{k}$ such that, for all $k> \bar{k}$, $x_{j_k} = z_{j_{k-1}}^{k-1}$, i.e., $ x_{j_k} = x_{j_{k-1}}+\alpha_{j_{k-1}}^{k-1} v(x_{j_{k-1}})$, we say that $\left\{x_{j_k}\right\}$ is a \textit{refining linked sequence}.
	
	If, otherwise, for any $\bar{k}$ there exists $k>\bar{k}$ such that $x_{j_k}$ is obtained at iteration $k-1$ by steps \ref{step:inner_for}-\ref{step:insert_partial}, we say that  $\left\{x_{j_k}\right\}$ is an \textit{exploring linked sequence}.
\end{definition}

Basically, a refining linked sequence is such that, from a certain iteration onwards, only steepest common descent steps are carried out so that the current solution is improved and driven towards Pareto-stationarity. On the other hand, exploring sequences include an infinite number of steps of enrichment of the Pareto set approximation. The two different kinds of linked sequences are illustrated in \cref{fig::linked_sequences}. 
\begin{figure}
	\subfloat[Refining linked sequences]{\includegraphics[width=0.45\textwidth]{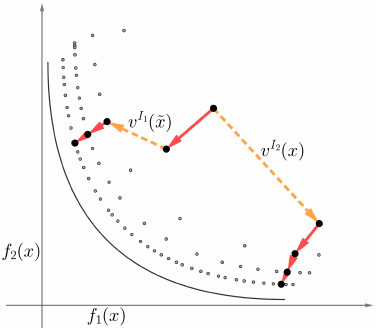}}
	\hfil
	\subfloat[Exploring linked sequences]{\includegraphics[width=0.45\textwidth]{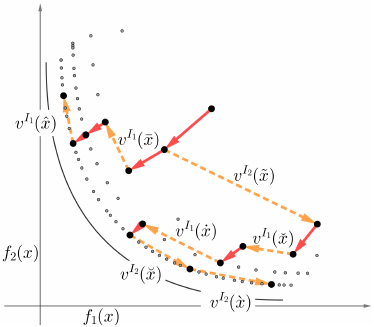}}
	\caption{Examples of linked sequences on a bi-objective problem. 
		The red solid arrows indicate common descent (refining) steps, whereas orange dotted ones denote
		 partial descent (exploring) steps. For the latter, we indicate the considered subset of objective functions, with $I_1 = \{1\}$ and $I_2 = \{2\}$. 
	The black solid line denotes the Pareto front of the problem.}
	\label{fig::linked_sequences}
\end{figure}
Note that the aforementioned modification to \cref{alg::improved_FSD} is fundamental for the above definitions. Indeed, the exploration step w.r.t.\ the subset $I=\{1,\ldots,m\}$ would most often result in $z_c^k$ being removed from $\hat{X}^k$, making the concept of refining sequence somewhat unsubstantial.
Now, the above distinction immediately allows us to give the following complexity result for all refining linked sequences. 
\begin{proposition}
	\label{prop:compl_rls}
	Assume the gradients $\nabla f_1(x),\ldots,\nabla f_m(x)$ are Lipschitz continuous with constants $L_1,\ldots, L_m$ ($L_{\max} = \max_{j=1,\ldots,m}L_j$), and that a nonempty subset $J\subseteq\{1,\ldots,m\}$ exists such that $f_j$ is bounded below for all $j\in J$. 
	Let $\left\{X^k\right\}$ be the sequence of sets of nondominated points produced by \cref{alg::improved_FSD} and let $\left\{x_{j_k}\right\}$ be a refining linked sequence. Assume $\bar{k}$ is the \rev{first iteration index} such that $x_{j_k} = z_{j_{k-1}}^{k-1}$ for all $k>\bar{k}$. Then, for any $\epsilon>0$, after at most $k_\text{max}$ iterations we get $\|v(x_{j_{k_\text{max}}})\|\le \epsilon$, with $k_\text{max} = \bar{k} + k_\text{ref}$ and $k_\text{ref} = \mathcal{O}(\frac{1}{\epsilon^2})$.
\end{proposition}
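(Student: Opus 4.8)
The plan is to reduce this to a standard single-point steepest-descent complexity estimate applied to the tail of the refining linked sequence. By hypothesis, for all $k>\bar k$ we have $x_{j_k}=z_{j_{k-1}}^{k-1}=x_{j_{k-1}}+\alpha_{j_{k-1}}^{k-1}v(x_{j_{k-1}})$, so from iteration $\bar k$ onward the linked sequence is exactly the iterate sequence of the Fliege--Svaiter steepest descent method with Armijo-type line search, where the sufficient decrease condition at step \ref{step:line_search} reads $F(x_{j_k})\le F(x_{j_{k-1}})+\boldsymbol 1\,\gamma\alpha_{j_{k-1}}^{k-1}\mathcal D(x_{j_{k-1}},v(x_{j_{k-1}}))$ and $\mathcal D(x,v(x))=-\|v(x)\|^2$. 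First I would fix a coordinate $j^\ast\in J$ (an index along which $f_{j^\ast}$ is bounded below, say by $f_{\min}$). Reading the $j^\ast$-th component of the decrease inequality gives, for every $k>\bar k$,
\[
f_{j^\ast}(x_{j_{k}}) \le f_{j^\ast}(x_{j_{k-1}}) - \gamma\,\alpha_{j_{k-1}}^{k-1}\,\|v(x_{j_{k-1}})\|^2 .
\]
Summing from $\bar k$ to $K-1$ and using $f_{j^\ast}\ge f_{\min}$ telescopes to $\sum_{k=\bar k}^{K-1}\alpha_{j_{k}}^{k}\|v(x_{j_{k}})\|^2 \le (f_{j^\ast}(x_{j_{\bar k}})-f_{\min})/\gamma =: C/\gamma$, a finite bound independent of $K$.

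Next I would produce a uniform lower bound on the accepted stepsizes $\alpha_{j_k}^{k}$ away from stationarity, via the usual Armijo backtracking argument. If $\alpha_{j_k}^{k}<\alpha_0$, then $\alpha_{j_k}^{k}/\delta$ was rejected, meaning that for at least one objective index $\ell$,
\[
f_\ell\!\left(x_{j_k}+\tfrac{\alpha_{j_k}^{k}}{\delta}v(x_{j_k})\right) > f_\ell(x_{j_k}) + \gamma\,\tfrac{\alpha_{j_k}^{k}}{\delta}\,\mathcal D(x_{j_k},v(x_{j_k})).
\]
Using $L$-Lipschitz continuity of $\nabla f_\ell$ (so $L_{\max}$-smoothness), a descent-lemma bound on the left-hand side together with $\nabla f_\ell(x_{j_k})^\top v(x_{j_k})\le \mathcal D(x_{j_k},v(x_{j_k}))=-\|v(x_{j_k})\|^2$ yields, after cancelling a factor $\alpha_{j_k}^{k}/\delta>0$, an inequality of the form $\tfrac{\alpha_{j_k}^{k}}{2\delta}L_{\max}\|v(x_{j_k})\|^2 > (1-\gamma)\|v(x_{j_k})\|^2$, hence $\alpha_{j_k}^{k} > \tfrac{2\delta(1-\gamma)}{L_{\max}}$ whenever $v(x_{j_k})\neq 0$. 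Therefore $\alpha_{j_k}^{k}\ge \alpha_{\min}:=\min\{\alpha_0,\ 2\delta(1-\gamma)/L_{\max}\}$ for every $k>\bar k$ with $x_{j_k}$ not Pareto-stationary (and the target is trivially met at a stationary point).

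Combining the two estimates, for every $K>\bar k$,
\[
\alpha_{\min}\sum_{k=\bar k}^{K-1}\|v(x_{j_k})\|^2 \;\le\; \sum_{k=\bar k}^{K-1}\alpha_{j_k}^{k}\|v(x_{j_k})\|^2 \;\le\; \frac{C}{\gamma},
\]
so $\min_{\bar k\le k\le K-1}\|v(x_{j_k})\|^2 \le C/(\gamma\,\alpha_{\min}\,(K-\bar k))$. Hence to guarantee $\|v(x_{j_k})\|\le\epsilon$ for some $k$ in the window it suffices that $K-\bar k \ge C/(\gamma\alpha_{\min}\epsilon^2)$, i.e.\ $k_{\mathrm{ref}}=\mathcal O(1/\epsilon^2)$, and $k_{\max}=\bar k+k_{\mathrm{ref}}$; here the constant $C=f_{j^\ast}(x_{j_{\bar k}})-f_{\min}$ can, if desired, be bounded using compactness of $\mathcal L(x_0)$ or simply kept symbolic. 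Since we actually want the bound at the final index rather than at the best index in the window, I would note that $\{f_{j^\ast}(x_{j_k})\}_{k>\bar k}$ is nonincreasing and the $x_{j_k}$ stay in the (compact) sublevel set, and then either (i) state the result in the customary "after at most $k_{\max}$ iterations, $\min_k\|v(x_{j_k})\|\le\epsilon$" form — which is what these complexity statements conventionally mean — or (ii) strengthen slightly by observing that once $\|v(x_{j_k})\|\le\epsilon$ is reached one may continue; I would favor (i) to match the literature, remarking that $\{\|v(x_{j_k})\|\}$ need not itself be monotone.

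The main obstacle I anticipate is the bookkeeping around "the final iterate," namely reconciling the per-step lower bound $\alpha_{\min}$ (which requires $v(x_{j_k})\neq 0$, i.e.\ one must handle the degenerate possibility that some intermediate iterate is already stationary and the linked sequence stalls there) with the fact that the complexity statement is phrased in terms of $\|v(x_{j_{k_{\max}}})\|$; everything else — the telescoping sum, the descent lemma, the Armijo rejection inequality — is routine and essentially identical to the single-objective case, just carried out componentwise over the objective that is bounded below.
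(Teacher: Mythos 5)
Your proposal is correct and takes essentially the same route as the paper: the key step in both is observing that $\{x_{j_k}\}_{k\ge\bar k}$ coincides with the iterate sequence of the single-point multi-objective steepest descent method with Armijo line search, after which the $\mathcal{O}(1/\epsilon^2)$ bound is the standard complexity result (which the paper simply cites from Fliege--Vaz--Vicente and Lapucci, and which you correctly re-derive in full via the telescoping sum on a bounded-below objective and the descent-lemma lower bound on the accepted stepsize). Your closing caveat about the statement referring to $\|v(x_{j_{k_\text{max}}})\|$ rather than the minimum over the window is a fair reading of a genuine imprecision in the proposition's phrasing, not a gap in your argument.
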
 
\begin{proof}
	The proof straightforwardly follows from the results in \cite{fliege2019complexity,lapucci2024convergence}, noting that the sequence $\{x_{j_k}\}_{k\ge \bar{k}}$ exactly coincides with the sequence produced by the multi-objective steepest descent method with initial point $x_{j_{\bar{k}}}$.
\end{proof}

\begin{remark}
	\label{remark:compl}
	The above result needs to be interpreted carefully; from the one hand, no bound is provided on the iteration $\bar{k}$ where the last exploration step is carried out along the linked sequence. Of course this number might be arbitrarily large and dominate the sum defining $k_\text{max}$. However, \cref{prop:compl_rls} allows to characterize the cost of the tail of purely refining steps of the sequence. In other words, once a point is generated by an exploration step, we are guaranteed that in $\mathcal{O}(\frac{1}{\epsilon^2})$ additional iterations either it will have been driven to $\epsilon$-Pareto-stationarity or it will have been deleted from the list as a result of other points dominating it. 
\end{remark}

The other element of interest we get from the characterization of linked sequences lies in the possibility of soundly boosting the refinement step, making use of some of the efficient descent methods proposed in the literature for single-point MOO. From a computational perspective, such change might allow to drive more quickly solutions to Pareto stationarity and save computational resources to be employed to better spread the Pareto front. 
\section{Front Descent Algorithms}
\label{sec:fda}
In this section, we present the main algorithm introduced in this work. The fundamental idea consists in exploiting better search directions than simple steepest descent in the refinement step of Front Descent, in order to speed up convergence towards Pareto stationarity of refining linked sequences. The analysis in following sections will also highlight that the overall number of refining steps is consequently cut down, so that more computational resources can be invested for the improvement of the spread of the constructed Pareto front.
The general \textit{Front Descent method} is formally described in \cref{alg::F-Newton}. 

\begin{algorithm}[htb]
	\caption{\texttt{FrontDescent}} 
	\label{alg::F-Newton}
	\begin{algorithmic}[1]
		\STATE{Input: $F:\mathbb{R}^n \rightarrow \mathbb{R}^m$, $X^0$ set of mutually nondominated points w.r.t.\ $F$, $\alpha_0>0,$ $\delta\in(0,1),\gamma\in(0,1)$, $\Gamma_1>0,\Gamma_2>0$, $\{\sigma_k\}\subseteq\mathbb{R}_+$.}
		\STATE{$k = 0$}
		\WHILE{a stopping criterion is not satisfied}
			\STATE{$\hat{X}^k = X^k$}
			\FORALL{$x_c\in X^k$ \label{step:main_for}}
				\IF{$x_c \in \hat{X}^k$ \label{step:x_c_N}}
					\IF{$\theta(x_c)<-\sigma_k$ \label{step.if_cond_N}}
						\STATE{Compute a common descent direction $v_D(x_c)$ 
						\label{step:search_dir}}
						\IF{$\mathcal{D}(x_c,v_D(x_c))\le-\Gamma_1\|v(x_c)\|^2$ \textbf{and}  $\|v_D(x_c)\|\le\Gamma_2\|v(x_c)\|$ \label{step:sdr}}
							\STATE{Set $d_c = v_D(x_c)$}
						\ELSE
							\STATE{Set $d_c = v(x_c)$ \label{step:sd_dir}}
						\ENDIF
						\STATE{$\alpha_c^k = \max\limits_{h\in\mathbb{N}} \{\alpha_0\delta^h\mid F(x_c+\alpha_0\delta^h d_c)\le F(x_c)+\mathbf{1}\gamma\alpha_0\delta^h\mathcal{D}(x_c, d_c)\}$\label{step:line_search_N}}
					\ELSE
						\STATE{$d_c = v(x_c)$\\$\alpha_c^k=0$ \label{step:end_if_else_N}}
					\ENDIF
					\STATE{$z_c^k = x_c+\alpha_c^kd_c$ \label{step:z_N}}
					\STATE{$\hat{X}^k = \hat{X}^k \setminus \left\{y \in \hat{X}^k \mid F(z^k_c) \lneqq F(y)\right\} \cup \left\{z^k_c\right\}$ \label{step:add_zk_N}}
					\FORALL{$I\subsetneq\{1,\ldots,m\}$ s.t.\ $\theta^I(z_c^k) < 0$ \label{step:inner_for_N}}
						\IF{$z_c^k\in\hat{X}^k$}
							\STATE{$\alpha_c^I$ = $\max\limits_{h\in\mathbb{N}} \{\alpha_0\delta^h\mid \forall y\in\hat{X}^k,\;\exists\, j \text{ s.t. } f_j(z_c^k+\alpha_0\delta^h v^I(z_c^k)) < f_j(y)\}$ \label{step:second_ls_N}}
							\STATE{$\hat{X}^k = \hat{X}^k \setminus \left\{y \in \hat{X}^k \mid F\left(z_c^k + \alpha_c^I v^I(z_c^k)\right) \lneqq F(y)\right\} \cup \left\{z_c^k + \alpha_c^I v^I(z_c^k)\right\}$ \label{step:insert_partial_N}}
						\ENDIF
					\ENDFOR	
				\ENDIF	
			\ENDFOR \label{step:end_loop}
			\STATE{$X^{k + 1} = \hat{X}^k$}
			\STATE{$k = k + 1$}
		\ENDWHILE
		\RETURN $X^k$
	\end{algorithmic}
\end{algorithm}

\rev{The main structure of the algorithm traces back to that of \cref{alg::improved_FSD}, with each iteration operating on a list of mutually nondominated solutions that sequentially undergo a refinement step and serve as starting points for exploration steps. 

Of course, some crucial elements change with respect to the FSD method.
The most evident difference from \cref{alg::improved_FSD} of course lies in the definition of the search direction for refinement, at steps \ref{step:search_dir}-\ref{step:sd_dir}: any common descent direction can in principle be considered. However, we can actually observe that the desired direction is not necessarily employed at steps \ref{step:line_search_N} and \ref{step:z_N}; indeed, the control at step \ref{step:sdr} checks for the direction $v_D(x_c)$ being steepest-descent-related according to \cite{lapucci2024convergence}. If the condition is not satisfied, we resort to the steepest descent direction $v(x_c)$. 

The presence of the steepest-descent-related safeguard allows to obtain various global convergence and worst case complexity results, as we will detail in \cref{sec:convergence}.
Moreover, it will be possible to state conditions guaranteeing that classes of search directions always pass the control.

The other major difference with respect to the baseline FSD is arguably more subtle, and concerns the Pareto-stationarity check at step \ref{step.if_cond_N}. In fact, we set in \cref{alg::F-Newton} an explicit threshold $\sigma_k$ for the  check at step \ref{step.if_cond_N} in iteration $k$. 
With this expedient, we are able to formally address the computational practice, where nonzero thresholds have to be set to robustly deal with finite precision. 

The computational behavior of the algorithm hence depends on a predefined sequence of thresholds $\{\sigma_k\}\subseteq\mathbb{R}_+$. From the one hand, letting $\sigma_k\to 0$ we still can reach arbitrary precision in the limit. On the other hand, 
in software implementations of the algorithm we might want to set a constant nonzero value $\sigma>0$ for $\sigma_k$, corresponding to the acceptable accuracy for considering a point Pareto-stationary. In this second scenario, as we will see later in this work, computationally nice properties of the procedure can be formally guaranteed. 

It might be important to underline that, instead, the exploration phase, remains unchanged w.r.t.\ \cref{alg::improved_FSD}: the directions used to carry out the exploration steps are the simple (partial) steepest descent ones.
Similarly as what we observed for \cref{alg::improved_FSD}, carrying out exploration steps for all proper subsets of $\{1,\ldots,m\}$ might become expensive for problems where $m$ (and also $n$) is relatively large. However, relaxing this requirement would once again have no jeopardizing effect on the convergence results presented in the following.}

\section{Convergence Analysis}
\label{sec:convergence}
In this section, we provide the theoretical analysis for \cref{alg::F-Newton}. The analysis is split into two parts: the former one concerns standard results on linked sequences, similar to those given in the literature for FSD and similar methods. The latter one concerns a completely novel type of convergence and complexity results for front descent algorithms, allowing to directly characterize the behavior of the sequence of whole sets $\{X^k\}$. 
\subsection{Convergence Analysis of Linked Sequences}
\label{sec:conv_linked}
Before turning to the convergence analysis of \cref{alg::F-Newton}, we need to give some results, similar to \cref{lemma:ls,lemma:prelims,lemma:lemma} for FSD, stating that the procedure is well defined.
\begin{lemma}
	\label{lemma:ls_N}
	The line search at step \ref{step:line_search_N} of \cref{alg::F-Newton} is well defined.
\end{lemma}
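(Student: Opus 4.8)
The plan is to mimic the classical Armijo line-search well-definedness proof, exactly as in \cref{lemma:ls} for FSD, but with the search direction $d_c$ in place of $v(x_c)$ and the stationarity gap $\mathcal{D}(x_c,d_c)$ in place of $\mathcal{D}(x_c,v(x_c))$. The key observation that makes the argument go through uniformly is that, whichever branch is taken at steps \ref{step:sdr}--\ref{step:sd_dir}, the direction $d_c$ is a \emph{common descent direction} at $x_c$, i.e.\ $\mathcal{D}(x_c,d_c)<0$: if the safeguard at step \ref{step:sdr} fails we set $d_c=v(x_c)$, and since we only reach step \ref{step:line_search_N} when $\theta(x_c)<-\sigma_k\le 0$ we have $x_c$ non--Pareto-stationary, so $\mathcal{D}(x_c,v(x_c))=-\|v(x_c)\|^2<0$; if instead the safeguard holds, then by its first inequality $\mathcal{D}(x_c,d_c)\le -\Gamma_1\|v(x_c)\|^2<0$ as well, using again that $v(x_c)\ne 0$. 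So in all cases $\mathcal{D}(x_c,d_c)<0$.

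Given that, I would argue by contradiction: suppose the set on the right-hand side of step \ref{step:line_search_N} is empty, i.e.\ for every $h\in\mathbb{N}$ there is some index $j_h\in\{1,\ldots,m\}$ with
\begin{equation*}
  f_{j_h}\bigl(x_c+\alpha_0\delta^h d_c\bigr) > f_{j_h}(x_c) + \gamma\,\alpha_0\delta^h\,\mathcal{D}(x_c,d_c).
\end{equation*}
Since there are finitely many indices, one index $j$ repeats infinitely often; restrict to the corresponding subsequence of $h$'s. Dividing through by $\alpha_0\delta^h>0$, rearranging, and letting $h\to\infty$ (so $\alpha_0\delta^h\to 0$), the left-hand difference quotient tends to the directional derivative $\nabla f_j(x_c)^\top d_c$, giving
\begin{equation*}
  \nabla f_j(x_c)^\top d_c \ge \gamma\,\mathcal{D}(x_c,d_c).
\end{equation*}
But $\nabla f_j(x_c)^\top d_c \le \mathcal{D}(x_c,d_c)$ by definition of $\mathcal{D}$, so $\mathcal{D}(x_c,d_c)\ge\gamma\,\mathcal{D}(x_c,d_c)$, i.e.\ $(1-\gamma)\mathcal{D}(x_c,d_c)\ge 0$; since $\gamma\in(0,1)$ this forces $\mathcal{D}(x_c,d_c)\ge 0$, contradicting $\mathcal{D}(x_c,d_c)<0$. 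Hence the set is nonempty and the maximum over the (discrete, bounded above) set $\{\alpha_0\delta^h\}$ is attained, so step \ref{step:line_search_N} is well defined.

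There is no real obstacle here — this is a textbook backtracking argument — but the one point deserving care is making sure the case analysis on $d_c$ is complete and that $\mathcal{D}(x_c,d_c)<0$ genuinely holds in both branches; in particular one must invoke $\theta(x_c)<-\sigma_k$ (hence $x_c$ not Pareto-stationary, hence $v(x_c)\ne 0$) so that the bound $\mathcal{D}(x_c,d_c)\le-\Gamma_1\|v(x_c)\|^2$ is strictly negative and not merely nonpositive. I would state this as an explicit preliminary sentence, then run the contradiction argument once, for a generic common descent direction $d_c$, so that it covers both branches simultaneously. The continuity of $\nabla f_j$ (from continuous differentiability of $F$) is all that is needed to pass to the limit in the difference quotient.
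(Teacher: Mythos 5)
Your proof is correct and follows essentially the same route as the paper: the paper performs the identical two-branch case analysis to conclude $\mathcal{D}(x_c,d_c)<0$ and then simply cites \cite[Lemma 4]{fliege2000steepest} for the well-definedness of the backtracking, whereas you unpack that cited lemma into the explicit difference-quotient contradiction argument. No gap.
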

\begin{proof}
	We have two cases, depending on the definition of direction $d_c$.
	\begin{enumerate}[(i)]
		\item $d_c=v(x_c)$ - the direction is the steepest descent direction; recalling that $\theta(x_c)<-\sigma_k\le0$, we have $\mathcal{D}(x_c, d_c)<0$ and thus, by \cite[Lemma 4]{fliege2000steepest}, we get the result.
		\item $d_c=v_D(x_c)$ - the direction is the tentative direction; recalling that $\theta(x_c)<0$, i.e., $v(x_c)\neq 0$, we get $\mathcal{D}(x_c,d_c) \le -\Gamma_1\|v(x_c)\|^2<0$ by the first condition of step \ref{step:sdr}. By \cite[Lemma 4]{fliege2000steepest}, we again get the result.
	\end{enumerate}
\end{proof}
\begin{lemma}
	\label{lemma:prelims_N}
	Let $X^k$ be a set of mutually nondominated points. Then:
	\begin{enumerate}[(a)]
		\item throughout the entire iteration $k$, $\hat{X}^k$ is a set of mutually nondominated points;
		\item the line search at step \ref{step:second_ls_N} of \cref{alg::F-Newton} is always well-defined;
		\item $X^{k+1}$ is a set of mutually nondominated solutions.
	\end{enumerate}
\end{lemma}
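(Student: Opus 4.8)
The plan is to mirror the structure of the proof of \cref{lemma:prelims} for FSD, since the only genuinely new feature of \cref{alg::F-Newton} relevant here is the choice of the refinement direction $d_c$, which has already been handled in \cref{lemma:ls_N}. I would proceed by induction on the sequence of insertions/deletions performed during iteration $k$, showing that the invariant ``$\hat{X}^k$ is a set of mutually nondominated points'' is preserved by each modification of $\hat{X}^k$ (steps \ref{step:add_zk_N} and \ref{step:insert_partial_N}).

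For part (a), I would start from the hypothesis that $\hat{X}^k = X^k$ is mutually nondominated at the beginning of iteration $k$. For the update at step \ref{step:add_zk_N}: the point $z_c^k$ is obtained either by a well-defined line search along $d_c$ (guaranteed by \cref{lemma:ls_N}) or is set to $x_c$ when $\theta(x_c) \ge -\sigma_k$. In both cases $F(z_c^k) \le F(x_c)$. After removing from $\hat{X}^k$ every $y$ with $F(z_c^k) \lneqq F(y)$ and then adding $z_c^k$, I need to check that no remaining point dominates $z_c^k$ and that $z_c^k$ does not dominate any remaining point; the latter holds by construction of the removal set, and the former holds because any $y$ that dominated $z_c^k$ would, by transitivity together with $F(z_c^k)\le F(x_c)$, also dominate or equal $x_c$ — and here I would distinguish the case $\alpha_c^k>0$ (strict decrease along all objectives contradicts $y$ being in the previously nondominated set together with $x_c$) from $\alpha_c^k=0$ (then $z_c^k=x_c$, which cannot be strictly dominated since the set was nondominated, and equality with removal of duplicates is handled as in \cite{lapucci2023improved}). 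The same bookkeeping applies verbatim at step \ref{step:insert_partial_N}, using the partial steepest descent step, which is unchanged from FSD. Part (b) then follows exactly as \cite[Prop.\ 3.2]{lapucci2023improved}: since after step \ref{step:add_zk_N} (and after each exploration update) $\hat{X}^k$ is mutually nondominated and finite, for $y = z_c^k + \alpha v^I(z_c^k)$ with $\alpha$ small enough, continuity of $F$ together with $\theta^I(z_c^k)<0$ (so $v^I(z_c^k)$ is a descent direction for all $j\in I$) ensures that for every $y'\in\hat{X}^k$ there is an index $j$ with $f_j(z_c^k+\alpha v^I(z_c^k)) < f_j(y')$, because either $y'$ did not already dominate $z_c^k$ (so some component is strictly larger and stays so by continuity) or $y'=z_c^k$ itself (strict decrease on $I$). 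Part (c) is immediate: $X^{k+1} = \hat{X}^k$ at the end of the loop, which is mutually nondominated by part (a).

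The only place where \cref{alg::F-Newton} differs substantively from \cref{alg::improved_FSD} is the refinement direction, and the key point making the argument go through unchanged is that in every branch we still have $F(z_c^k) \le F(x_c)$ with the inequality strict in all components when $\alpha_c^k>0$ — this is exactly what the Armijo-type condition at step \ref{step:line_search_N} with $\mathcal{D}(x_c,d_c)<0$ gives us, and $\mathcal{D}(x_c,d_c)<0$ was established in the proof of \cref{lemma:ls_N} in both the $d_c=v(x_c)$ and $d_c=v_D(x_c)$ cases. Hence none of the set-manipulation arguments of \cite{lapucci2023improved} are affected.

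I expect the main (modest) obstacle to be purely expository: handling the edge cases where a newly inserted point coincides with an existing one, or where $\alpha_c^k=0$, without circularity — i.e., making sure the ``remove the dominated ones, then add'' operation genuinely yields a mutually nondominated set rather than merely an antichain-up-to-duplicates. This is handled, as in \cite{lapucci2023improved}, by the convention that the removal step also discards any $y$ with $F(z_c^k)\le F(y)$ when $z_c^k$ is a duplicate, or equivalently by noting the operation never introduces two copies of the same image; I would simply invoke the corresponding argument from \cite[Prop.\ 3.2--3.3]{lapucci2023improved} rather than reproduce it. Indeed, since the set-theoretic mechanics are identical to those of \cref{alg::improved_FSD}, the cleanest write-up is to reduce each claim to its FSD counterpart after observing the sign property $\mathcal{D}(x_c,d_c)<0$ just noted.
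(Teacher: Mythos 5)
Your proposal is correct and matches the paper's approach: the paper's proof of this lemma is literally a one-line reduction to \cite[Prop.\ 3.2--3.3]{lapucci2023improved}, and your observation that the only new ingredient is the refinement direction $d_c$, which still satisfies $\mathcal{D}(x_c,d_c)<0$ by the argument of \cref{lemma:ls_N}, is exactly the (implicit) justification for why those proofs carry over verbatim. The additional bookkeeping you spell out is a faithful reconstruction of the cited FSD argument rather than a different route.
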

\begin{proof}
	Identical proofs as for \cite[Prop.\ 3.2-3.3]{lapucci2023improved} hold for this case.
\end{proof}
\begin{lemma}
	\label{lemma:lemma_N}
	After step \ref{step:add_zk_N} of \cref{alg::F-Newton}, $z^k_c$ belongs to $\hat{X}^k$. Moreover, for all $\tilde{k}>k$, there exists $y\in X^{\tilde{k}}$ such that $F(y)\le F(z_c^k)$.
\end{lemma}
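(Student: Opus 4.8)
The claim mirrors \cref{lemma:lemma} for the baseline FSD, so the natural approach is to adapt the argument from \cite[Lemma 3.1]{lapucci2023improved}. The first assertion, that $z_c^k \in \hat X^k$ immediately after step \ref{step:add_zk_N}, follows directly from the structure of that update: the set $\hat X^k$ is obtained by removing from it only points $y$ with $F(z_c^k)\lneqq F(y)$ and then adding $z_c^k$ itself. Since $F(z_c^k)\lneqq F(z_c^k)$ is false, $z_c^k$ is not among the removed points, and it is explicitly inserted, hence it belongs to the updated $\hat X^k$. The only subtlety worth a sentence is that this holds regardless of whether $d_c$ is the steepest descent direction or the tentative direction $v_D(x_c)$, and regardless of whether $\alpha_c^k>0$ or $\alpha_c^k=0$ (in which case $z_c^k=x_c$); the update at step \ref{step:add_zk_N} is purely set-theoretic and does not depend on how $z_c^k$ was produced.

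For the second assertion I would argue by induction on $\tilde k > k$, exactly as in the FSD case, exploiting \cref{lemma:prelims_N} to know that all the relevant sets are composed of mutually nondominated points. The base case $\tilde k = k+1$: after $z_c^k$ is inserted at step \ref{step:add_zk_N}, it may still be processed (as $z_c^k$) during the exploration phase of the same iteration, or it may be removed later in iteration $k$ when some other point $z^k_{c'}$ (or an exploration point) dominating it is added. In either event, every time a point is deleted from $\hat X^k$ at steps \ref{step:add_zk_N} or \ref{step:insert_partial_N}, it is deleted precisely because a newly inserted point dominates it, i.e.\ has an objective vector $\le$ (indeed $\lneqq$) its own. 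Hence at the end of iteration $k$ there is some $y \in X^{k+1}=\hat X^k$ with $F(y)\le F(z_c^k)$: either $z_c^k$ itself survived, or it was replaced by a chain of dominating points, and by transitivity of $\le$ the last surviving one satisfies the bound. For the inductive step, assume $y\in X^{\tilde k}$ with $F(y)\le F(z_c^k)$; during iteration $\tilde k$, $y$ is either carried over unchanged to $X^{\tilde k+1}$, or it is itself refined into $z^{\tilde k}_y = y + \alpha^{\tilde k}_y d_y$ (with $F(z^{\tilde k}_y)\le F(y)$ by the sufficient-decrease line search at step \ref{step:line_search_N}, or $F(z^{\tilde k}_y)=F(y)$ if $\alpha^{\tilde k}_y = 0$), or it is removed because a dominating point was inserted; applying the already-established base-case reasoning to $z^{\tilde k}_y$ (which lies in $\hat X^{\tilde k}$ by the first assertion) and using transitivity of $\le$ yields a point in $X^{\tilde k+1}$ whose image is $\le F(z^{\tilde k}_y)\le F(y)\le F(z_c^k)$.

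The one point requiring a little care — and the closest thing to an obstacle — is ensuring that the "chain of dominators" argument is airtight: whenever a point is removed from $\hat X^k$, the point causing its removal is itself in $\hat X^k$ at that moment and has a dominating (hence $\le$) objective vector, and this point could in turn be removed later, so one must invoke transitivity of $\le$ along the whole chain within a single iteration and note the chain has finite length because $\hat X^k$ is finite and each insertion strictly decreases some objective on removed points. This is essentially verbatim the reasoning of \cite[Lemma 3.1]{lapucci2023improved}; the change in refinement direction does not affect it at all, since the argument only uses that step \ref{step:line_search_N} enforces $F(z_c^k)\le F(x_c)$ and that the set-update rules at steps \ref{step:add_zk_N}, \ref{step:insert_partial_N} remove only dominated points. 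I would therefore conclude by remarking that the proof is identical to that of \cref{lemma:lemma}, with \cref{lemma:prelims_N} playing the role of \cref{lemma:prelims}.
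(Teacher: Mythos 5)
Your proof is correct and follows exactly the route the paper takes: the paper's proof of this lemma is a one-line remark that the argument of \cite[Lemma 3.1]{lapucci2023improved} carries over verbatim, which is precisely the observation you make (and then helpfully spell out). The key point you correctly identify --- that the set-update and domination-chain reasoning depends only on the sufficient decrease $F(z_c^k)\le F(x_c)$ and the filtering rules, not on how the direction $d_c$ was chosen --- is exactly why the citation suffices.
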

\begin{proof}
	The proof of \cite[Lemma 3.1]{lapucci2023improved} holds for this result.
\end{proof}

\noindent We provide an additional lemma that will be useful in the following.
\begin{lemma}
	\label{lemma:compact_set}
	Let $X^0$ be a set of mutually nondominated points and $x_0\in X^0$ be a point such that the set $\mathcal{L}(x_0) = \bigcup_{j=1}^{m}\left\{x\in\mathbb{R}^n\mid f_j(x)\le f_j(x_0)\right\}$ is compact.
	Let $\left\{X^k\right\}$ be the sequence of sets of nondominated points produced by \cref{alg::F-Newton}.
	For all $k=0,1,\ldots,$ and for all $x\in X^k$, we have $x\in\mathcal{L}(x_0)$.
\end{lemma}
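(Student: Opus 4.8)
The plan is to prove by induction on $k$ that every point in $X^k$ lies in $\mathcal{L}(x_0)$. The base case $k=0$ is immediate since $X^0$ is a set of mutually nondominated points containing $x_0$: for any $x \in X^0$, $x$ cannot be dominated by $x_0$, hence it is not true that $F(x_0) \lneqq F(x)$, which (since the points are distinct or $x=x_0$) forces $f_j(x) \le f_j(x_0)$ for at least one index $j$, i.e., $x \in \mathcal{L}(x_0)$. Actually more carefully: mutual nondomination of $X^0$ means $x_0$ does not dominate $x$, so $F(x_0) \lneqq F(x)$ fails; combined with the fact that if all $f_j(x) > f_j(x_0)$ then $F(x_0) < F(x) \le F(x)$ contradicting nondomination, we conclude $\min_j (f_j(x) - f_j(x_0)) \le 0$, hence $x \in \mathcal{L}(x_0)$.

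For the inductive step, assume $X^k \subseteq \mathcal{L}(x_0)$ and recall that $X^{k+1} = \hat{X}^k$ at the end of iteration $k$. Every point in $\hat{X}^k$ is either (i) a point that was already in $X^k$ and survived all the filtering steps, or (ii) a point $z_c^k = x_c + \alpha_c^k d_c$ generated by a refinement step at step \ref{step:z_N}, or (iii) a point $z_c^k + \alpha_c^I v^I(z_c^k)$ generated by an exploration step at step \ref{step:insert_partial_N}. Case (i) is handled by the inductive hypothesis. For case (ii), the refinement line search at step \ref{step:line_search_N} (or the trivial case $\alpha_c^k = 0$) guarantees $F(z_c^k) \le F(x_c)$, and since $x_c \in X^k \subseteq \mathcal{L}(x_0)$ we have $f_j(x_c) \le f_j(x_0)$ for some $j$, whence $f_j(z_c^k) \le f_j(x_c) \le f_j(x_0)$, so $z_c^k \in \mathcal{L}(x_0)$. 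For case (iii), I would invoke \cref{lemma:lemma_N}: after step \ref{step:add_zk_N}, $z_c^k \in \hat{X}^k$, and we have just shown $z_c^k \in \mathcal{L}(x_0)$; the exploration line search at step \ref{step:second_ls_N} produces a point $w = z_c^k + \alpha_c^I v^I(z_c^k)$ that is inserted only if it is nondominated by all $y \in \hat{X}^k$, in particular nondominated by $z_c^k$, which gives (as in the base-case argument) that $w$ is not dominated by $z_c^k$. But this alone does not immediately yield $w \in \mathcal{L}(x_0)$, so I need a sharper observation.

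The sharper observation: the exploration line search condition at step \ref{step:second_ls_N} is exactly that for every $y \in \hat{X}^k$ there exists an index $j$ with $f_j(w) < f_j(y)$. Taking $y = z_c^k$ (which is in $\hat{X}^k$ by \cref{lemma:lemma_N}), there is some $j$ with $f_j(w) < f_j(z_c^k) \le f_j(x_c) \le f_j(x_0)$ (using $F(z_c^k) \le F(x_c)$ from the refinement step and the inductive hypothesis on $x_c$), hence $w \in \mathcal{L}(x_0)$. This closes case (iii). Since every element of $X^{k+1} = \hat{X}^k$ falls into one of the three cases, the induction is complete.

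The main obstacle — really the only non-bookkeeping point — is case (iii): one must recognize that the exploration line search, although phrased in terms of nondomination against the whole set $\hat{X}^k$, in particular competes against the just-inserted refined point $z_c^k$, and that by \cref{lemma:lemma_N} this point is guaranteed to be present in $\hat{X}^k$ when the exploration steps run. Everything else is a routine unwinding of the sufficient-decrease condition $F(z_c^k) \le F(x_c)$ and the definition of $\mathcal{L}(x_0)$ as a union (so that "dominated/comparable in at least one objective" is all that is needed). I would write the proof in two or three short paragraphs: the induction setup and base case, then case analysis on the origin of a point in $\hat{X}^k$, spending the bulk of the words on case (iii).
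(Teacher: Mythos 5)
Your induction setup, base case, and cases (i)--(ii) are fine, but case (iii) has a genuine index-mismatch gap. The exploration line search at step \ref{step:second_ls_N}, applied with $y=z_c^k$, gives you \emph{some} index $j_1$ with $f_{j_1}(w)<f_{j_1}(z_c^k)$; the inductive hypothesis $x_c\in\mathcal{L}(x_0)$ gives you \emph{some} index $j_0$ with $f_{j_0}(x_c)\le f_{j_0}(x_0)$. Nothing forces $j_1=j_0$, so the chain $f_j(w)<f_j(z_c^k)\le f_j(x_c)\le f_j(x_0)$ breaks at its last link: the middle inequality holds for every component (since $F(z_c^k)\le F(x_c)$ componentwise), but the last one is only known for $j_0$. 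Concretely, with $m=2$, $F(x_0)=(0,0)$, $F(z_c^k)=(-2,4)$, a candidate $w$ with $F(w)=(10,3.9)$ beats $z_c^k$ in the second objective yet lies in neither sublevel set of $x_0$; what actually excludes such a $w$ is not $z_c^k$ but the presence in $\hat{X}^k$ of a point $y$ with $F(y)\le F(x_0)$ in \emph{every} component. That is the missing ingredient: you must compare $w$ against a descendant of $x_0$ that dominates-or-equals $x_0$ componentwise, whose existence is exactly what \cref{lemma:lemma_N} (applied to the processing of $x_0$ at iteration $0$, together with the rule that points are only filtered out when dominated) provides.

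This is also where your route diverges from the paper's. The paper does not induct on $k$ or case-split on how each point was generated; it argues directly that for any $x\in X^k$ either $x_0\in X^k$, or by \cref{lemma:lemma_N} some $y_k\in X^k$ satisfies $F(y_k)\le F(x_0)$, and then mutual nondomination of $X^k$ (point (c) of \cref{lemma:prelims_N}) yields an index $h$ with $f_h(x)\le f_h(y_k)\le f_h(x_0)$ --- the componentwise bound $F(y_k)\le F(x_0)$ is precisely what lets the index from nondomination chain through. Your proof can be repaired by replacing the comparison point $z_c^k$ in case (iii) with such a $y_k$, but as written the argument does not go through.
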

\begin{proof}
	Let $k$ be a generic iteration and consider any $x\in X^k$. First, since $X^0$ is a set of nondominated points and recalling point (c) of \cref{lemma:prelims_N}, we know that $X^k$ is also a set of mutually nondominated solutions. We now have one of two cases.
	\begin{itemize}
		\item[-] $x_0\in X^k$; by the nondomination property of $X^k$, there exists at least one index $j$ such that $f_j(x)\le f_j(x_0)$; thus, $x\in \mathcal{L}(x_0)$. 
		\item[-] $x_0\notin X^k$; by \cref{lemma:lemma_N}, we know that there exists a point $y_k\in X^k$ such that $F(y_k)\le F(x_0)$; since $y_k$ does not dominate $x$, there exists $h \in \left\{1,\ldots, m\right\}$ such that $f_h(x)\le f_h(y_k)\le f_h(x_0)$, i.e., $x\in\mathcal{L}(x_0)$ also in this case.
	\end{itemize}
\end{proof}

We are now able to state the first global convergence result, similar to the one available for FSD, concerning all linked sequences produced by the algorithm (both refining and exploring), in case $\sigma_k$ is driven to 0.
\begin{proposition}
	Let $X^0$ be a set of mutually nondominated points and $x_0\in X^0$ be a point such that the set $\mathcal{L}(x_0) = \bigcup_{j=1}^{m}\left\{x\in\mathbb{R}^n\mid f_j(x)\le f_j(x_0)\right\}$ is compact.
	Let $\left\{X^k\right\}$ be the sequence of sets of nondominated points produced by \cref{alg::F-Newton}, with $\sigma_k\to0$. Let $\left\{x_{j_k}\right\}$ be a linked sequence, then it admits accumulation points and every accumulation point is Pareto-stationary for problem \cref{eq:mo_prob}.
\end{proposition}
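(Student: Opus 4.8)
The strategy is to mirror the classical FSD convergence argument, splitting into the two cases of the linked-sequence dichotomy (refining vs.\ exploring) introduced just before the statement, and in each case extracting a contradiction from assuming that some accumulation point is not Pareto-stationary. First, existence of accumulation points: by \cref{lemma:compact_set} every $x_{j_k}$ lies in the compact set $\mathcal{L}(x_0)$, so $\{x_{j_k}\}$ is bounded and admits accumulation points. Let $\bar x$ be such a point, with $x_{j_k}\to\bar x$ along a subsequence $K$.

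\textbf{Refining case.} Here there is $\bar k$ such that for all $k>\bar k$ the linked point is produced by the common-descent refinement step, i.e.\ $x_{j_k}=x_{j_{k-1}}+\alpha_{j_{k-1}}^{k-1}d_{j_{k-1}}$ with $d_{j_{k-1}}$ either $v_D$ or $v$, and a sufficient-decrease line search was performed at step \ref{step:line_search_N}. As in \cref{prop:compl_rls}, the tail of the sequence coincides with the iterates of a multi-objective descent method (with steepest-descent-related directions thanks to the safeguard at step \ref{step:sdr}). The standard argument then applies: the function values $f_j(x_{j_k})$ are monotonically nonincreasing along the tail and bounded below on $\mathcal{L}(x_0)$ (compactness plus continuity), hence the sufficient-decrease quantities $\alpha_{j_{k-1}}^{k-1}\mathcal{D}(x_{j_{k-1}},d_{j_{k-1}})\to 0$. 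One then distinguishes whether $\{\alpha_{j_{k-1}}^{k-1}\}$ is bounded away from zero along $K$ (forcing $\mathcal{D}(x_{j_{k-1}},d_{j_{k-1}})\to 0$, hence $\|v(x_{j_{k-1}})\|\to 0$ via $\mathcal{D}(x,d)\le-\Gamma_1\|v(x)\|^2$ or $\mathcal{D}(x,v(x))=-\|v(x)\|^2$) or $\alpha_{j_{k-1}}^{k-1}\to 0$ along a further subsequence (so the previous trial step $\alpha/\delta$ failed the Armijo test; dividing through and taking limits, using continuity of $\nabla f_j$, $v$, $v_D$ and $\gamma\in(0,1)$, again yields $\mathcal{D}(\bar x,\bar d)=0$ and thus $v(\bar x)=0$). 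Either way $\theta(\bar x)=0$, i.e.\ $\bar x$ is Pareto-stationary.

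\textbf{Exploring case.} Here infinitely many $x_{j_k}$ ($k\in K'$, WLOG $K'\subseteq K$) are produced by a partial-descent exploration step \ref{step:inner_for_N}--\ref{step:insert_partial_N}, say w.r.t.\ subset $I_k\subsetneq\{1,\ldots,m\}$; by finiteness of the subsets we may assume $I_k\equiv I$ along $K'$. Now suppose for contradiction $\theta(\bar x)<0$, so $v(\bar x)\neq 0$. The preceding point in the chain, $z_{j_{k-1}}^{k-1}$, was obtained from $x_{j_{k-1}}$ either by a genuine refinement step (giving a uniform decrease of all $f_j$ controlled by $\mathcal{D}(x_{j_{k-1}},d_{j_{k-1}})\le\gamma^{-1}\cdot(\text{something})$ — and when $\theta(x_c)<-\sigma_k$ this decrease is strict and quantitatively bounded since $\sigma_k\to 0$ cannot kick in near a non-stationary $\bar x$) or, when $\theta(x_{j_{k-1}})\ge-\sigma_{k-1}$, left unchanged. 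Since $\sigma_k\to 0$ and $\theta$ is continuous, for $k$ large enough along $K'$ we have $\theta(x_{j_{k-1}})<-\sigma_{k-1}$ (as $\theta(x_{j_{k-1}})\to\theta(\bar x)<0$), so the refinement step is actually taken and produces a strict decrease. Combined with the exploration line search at step \ref{step:second_ls_N}, which only accepts a step yielding a point nondominated by the current list, and the boundedness-below of the objectives, one derives that the total decrease telescopes and the exploration stepsizes $\alpha_{j_{k-1}}^{I}\to 0$. As in the FSD analysis, dividing the failed-trial inequality for $\alpha_{j_{k-1}}^I/\delta$ by the stepsize and passing to the limit — using continuity of $v^I$, of $F$, and \cref{lemma:lemma_N} to keep a limiting "blocking" point $\bar y$ with $F(\bar y)\le F(\bar z)$ around — yields $\mathcal{D}^I(\bar z,v^I(\bar z))\ge 0$ where $\bar z=\lim z_{j_{k-1}}^{k-1}$; but the refinement decrease forced $z_{j_{k-1}}^{k-1}$ and $x_{j_{k-1}}$ to share the same limit $\bar x$, so $\theta^I(\bar x)\ge 0$, contradicting $\theta^I(\bar x)\le\theta(\bar x)<0$ (note $\theta^I\le\theta$ since $I$ maximizes over fewer objectives — here one must be slightly careful: $\theta^I(x)\le\theta(x)$ does hold because the inner max is over a smaller index set). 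This contradiction establishes $\theta(\bar x)=0$.

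\textbf{Main obstacle.} The delicate point is the exploring case, specifically handling the interaction between the vanishing threshold sequence $\{\sigma_k\}$ and the refinement step that precedes each exploration step in the chain: one needs to argue that, near a hypothetical non-stationary accumulation point, the condition $\theta(x_c)<-\sigma_k$ is eventually satisfied so that the refinement decrease is genuinely available, and then to combine this with the telescoping of both the refinement and exploration decreases to force the exploration stepsizes to zero. Cleanly setting up the telescoping sum over a linked sequence whose points are being inserted into and deleted from the sets $\hat X^k$ — while invoking \cref{lemma:lemma_N} to track surviving dominating points — is the part requiring the most care; the rest is a routine adaptation of the Armijo-line-search limiting argument of Fliege–Svaiter and of the FSD analysis in \cite{cocchi2020convergence,lapucci2023improved}.
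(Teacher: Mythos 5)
Your overall skeleton (compactness via \cref{lemma:compact_set}, contradiction from a non-stationary accumulation point, Armijo limiting argument) is on the right track, and your refining case is essentially sound. However, the exploring case — which is the actual crux — contains a genuine gap, and the paper's proof avoids the case split entirely.

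The problematic step is your claim that, in the exploring case, ``the total decrease telescopes and the exploration stepsizes $\alpha_{j_{k-1}}^{I}\to 0$.'' There is no mechanism in \cref{alg::F-Newton} that forces this. The exploration line search at step \ref{step:second_ls_N} imposes no sufficient-decrease condition: it simply accepts the largest trial step whose image is not dominated by the current list. Moreover, an exploration step moves along a \emph{partial} descent direction $v^I$, which typically increases the objectives outside $I$, so the objective values along an exploring linked sequence are not monotone and nothing telescopes. Even granting $\alpha^I_{j_{k-1}}\to 0$, the ``failed trial'' at stepsize $\alpha^I_{j_{k-1}}/\delta$ is a \emph{domination} condition (the trial point is dominated by some member of $\hat X^k$), not an Armijo-type inequality, so it cannot be divided by the stepsize to produce $\mathcal{D}^I(\bar z, v^I(\bar z))\ge 0$. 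A further warning sign: the conclusion $\theta^I(\bar x)\ge 0$ is generically false even at Pareto-stationary points (partial descent directions persist there — that is precisely why the exploration phase exists), so any argument that appears to establish it should be viewed with suspicion.

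The paper's proof needs no dichotomy. The key observation is that every $x_{j_k}$ close to a hypothetical non-stationary $\bar x$ undergoes the \emph{refinement} step (since $\theta(x_{j_k})\to\theta(\bar x)<0$ and $\sigma_k\to 0$), yielding $F(z_{j_k}^k)\le F(x_{j_k})-\boldsymbol{1}\min\{1,\Gamma_1\}\gamma\alpha_{j_k}^k\|v(x_{j_k})\|^2$. Then, \emph{regardless} of how the next element of the linked sequence is generated, \cref{lemma:lemma_N} provides a surviving point $y\in X^{k_1(k)}$ with $F(y)\le F(z_{j_k}^k)$; since $x_{j_{k_1(k)}}$ also belongs to $X^{k_1(k)}$ and that set is mutually nondominated (\cref{lemma:prelims_N}), there is a single index $h(k)$ with $f_{h(k)}(x_{j_{k_1(k)}})\le f_{h(k)}(z_{j_k}^k)$. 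Passing to the limit along a subsequence with $h(k)\equiv h$ gives $f_h(\bar x)\le f_h(\bar z)\le f_h(\bar x)-\min\{1,\Gamma_1\}\gamma\bar\alpha\varepsilon^2$, which forces $\bar\alpha=0$; the standard Armijo-failure argument applied to the \emph{common} descent direction $\bar d$ (for which $\mathcal{D}(\bar x,\bar d)\le-\min\{1,\Gamma_1\}\|v(\bar x)\|^2<0$ is guaranteed by step \ref{step:sdr}) then yields the contradiction. This single-objective-index bridging via nondominance is the idea missing from your proposal; without it, the exploring case cannot be closed.
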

\begin{proof}
	By \cref{lemma:compact_set},
	the entire sequence $\{x_{j_k}\}$ is contained in the compact set $\mathcal{L}(x_0)$, and thus admits accumulation points. 
	
	We can now take an accumulation point $\bar{x}$ of a linked sequence $\{x_{j_k}\}$, i.e., we consider a subsequence $K\subseteq\{1,2,\ldots\}$ such that
	$x_{j_k} \to \bar{x}$ for $k\to\infty$, $k\in K$.
	Let us assume, by contradiction, that $\bar{x}$ is not Pareto-stationary, i.e., $\|v(\bar{x})\|>0$ and $\theta(\bar{x})<0$. {Recalling that $\sigma_k\to 0$ and that $\theta$ is continuous, we have for $k$ sufficiently large that $\theta(x_{j_k})<-\sigma_k$. Moreover,} 
	by the continuity of $v(\cdot)$, there exists $\varepsilon>0$ such that for all $k\in K$ sufficiently large we have $\|v(x_{j_k})\|>\varepsilon>0$.

	Let $z_{j_k}^k = x_{j_k}+\alpha_{j_k}^kd_{j_k}$ the point obtained at step \ref{step:z_N} of the algorithm when processing $x_{j_k}$; {for $k$ sufficiently large, since $\theta(x_{j_k})<-\sigma_k$, $\alpha_{j_k}$ and $d_{j_k}$ are obtained by instructions \ref{step:search_dir}-\ref{step:line_search_N}.}
	By the continuity of $v(\cdot)$, $v(x_{j_k})\to v(\bar{x})$ for $k\in K$, $k\to\infty$.
	By the second condition in the control at step \ref{step:sdr}, either $d_{j_k} = v(x_{j_k})$ or $\|d_{j_k}\|\le \Gamma_2 \|v(x_{j_k})\|$; therefore, $\|d_{j_k}\|\le \max\{1,\Gamma_2\}\|v(x_{j_k})\|.$ Since $v(x_{j_k})\to v(\bar{x})$, the sequence $d_{j_k}$ is bounded for $k\in K$. Moreover, $\alpha_{j_k}^k\in[0,\alpha_0]$, which is a compact set. Therefore, there exists a further subsequence $K_1\subseteq K$ such that, for $k\in K_1$, $k\to\infty$, we have 
	$$\lim_{\substack{k\to\infty\\k\in K_1}}\alpha_{j_k}^k= \bar{\alpha}\in[0,\alpha_0],\qquad\qquad \lim_{\substack{k\to\infty\\k\in K_1}}d_{j_k}= \bar{d},\qquad\qquad \lim_{\substack{k\to\infty\\k\in K_1}}z_{j_k}^k= \bar{x} + \bar{\alpha}\bar{d} = \bar{z}.$$
	By the definition of $\alpha_{j_k}^k$ and $z_{j_k}^k$ (steps \ref{step:line_search_N}-\ref{step:z_N}) we have that $$F(z_{j_k}^k)\le F(x_{j_k}) + \boldsymbol{1}\gamma \alpha_{j_k}^k \mathcal{D}(x_{j_k},d_{j_k}).$$
	By the first condition at step \ref{step:sdr}, we either have $\mathcal{D}(x_{j_k},d_{j_k})\le -\Gamma_1\|v(x_{j_k})\|^2$ or $\mathcal{D}(x_{j_k},d_{j_k}) = \mathcal{D}(x_{j_k},v(x_{j_k}))= -\|v(x_{j_k})\|^2$; thus, we can write 
	$$F(z_{j_k}^k)\le F(x_{j_k}) - \boldsymbol{1}\min\left\{{1},\Gamma_1\right\}\gamma \alpha_{j_k}^k \|v(x_{j_k})\|^2.$$
	Taking the limits for $k\in K_1$, $k\to \infty$, we get
	\begin{equation}
		\label{eq:main_1}
		F(\bar{z}) \le F(\bar{x}) - \boldsymbol{1}\min\left\{{1},\Gamma_1\right\}\gamma \bar{\alpha} \|v(\bar{x})\|^2\le F(\bar{x})-\boldsymbol{1}\min\left\{{1},\Gamma_1\right\}\gamma \bar{\alpha}\varepsilon^2.
	\end{equation}

	Let us now denote by $k_1(k)$ the smallest index in $K_1$ such that $k_1(k)>k$. For any $k\in K_1$, we know from \cref{lemma:lemma_N} that there exists $y_{j_{k_1(k)}}\in X^{k_1(k)}$ such that $F(y_{j_{k_1(k)}})\le F(z_{j_k}^k)$; since $x_{j_{k_1(k)}}\in X^{k_1(k)}$ too and, by \cref{lemma:prelims_N}, the points in $X^{k_1(k)}$ are mutually nondominated, there must exist $h(k)\in\{1,\ldots,m\}$ such that
	$$f_{h(k)}(x_{j_{k_1(k)}})\le f_{h(k)}(y_{j_{k_1(k)}})\le f_{h(k)}(z_{j_k}^k).$$
	Taking the limits along a further subsequence $K_2\subseteq K_1$ such that $h(k) = h$ for all $k\in K_2$, we get
	$f_h(\bar{x})\le f_h(\bar{z})$.
	Recalling \cref{eq:main_1}, we then obtain
	$$f_h(\bar{x})\le f_h(\bar{z})\le f_h(\bar{x})-\min\left\{{1},\Gamma_1\right\}\gamma\bar{\alpha}\varepsilon^2.$$
	Noting that $\bar{\alpha}\in[0,\alpha_0]$, $\varepsilon>0$ and $\gamma>0$, we deduce that $\bar{\alpha}=\lim_{k\to \infty,k\in K_2}\alpha_{j_k}^k=0$. 
	Being $\bar{x}$ not stationary, for all $k\in K_2$ sufficiently large $\theta(x_{j_k})<0$ holds and $\alpha_{j_k}^k$ is therefore defined at step \ref{step:line_search_N}. 
	Since $\alpha_{j_k}^k\to 0$, given any $q\in\mathbb{N}$, for all $k\in K_2$ large enough we necessarily have $\alpha_{j_k}^k< \alpha_0\delta^q$. This means that the Armijo condition $F\left(x_{j_k}+\alpha d_{j_k}\right)\le F(x_{j_k})+\boldsymbol{1}\gamma \alpha \mathcal{D}(x_{j_k}, d_{j_k})$ is not satisfied by $\alpha= \alpha_0\delta^q$, i.e., for some $\tilde{h}(k)$ we have $$f_{\tilde{h}(k)}\left(x_{j_k}+\alpha_0\delta^q d_{j_k}\right) > f_{\tilde{h}(k)}(x_{j_k})+\gamma \alpha_0\delta^q \mathcal{D}(x_{j_k}, d_{j_k}).$$
	Taking the limits along a suitable subsequence such that $\tilde{h}(k)= \tilde{h}$, we get $$f_{\tilde{h}}\left(\bar{x}+\alpha_0\delta^q\bar{d}\right)\ge f_{\tilde{h}}(\bar{x})+\gamma \alpha_0\delta^q\mathcal{D}(\bar{x}, \bar{d}).$$
	Being $q$ arbitrary and recalling \cite[Lemma 4]{fliege2000steepest}, it must be $\mathcal{D}(\bar{x},\bar{d})\ge 0$. 
	On the other hand, we know that $\mathcal{D}(x_{j_k},d_{j_k})\le -\min\left\{{1},\Gamma_1\right\}\|v(x_{j_k})\|^2$. Taking the limits we get
	$\mathcal{D}(\bar{x},\bar{d})\le -\min\left\{{1},\Gamma_1\right\}\|v(\bar{x})\|^2<0$. 
	The contradiction finally ends the proof.
\end{proof}

Similarly as in \cref{prop:compl_rls} for FSD, we can also get a complexity result for refining linked sequences (cfr.\ \cref{remark:compl}).  
\begin{proposition}
	\label{prop:complexity_linked_fd}
	Assume the gradients $\nabla f_1(x),\ldots,\nabla f_m(x)$ are Lipschitz continuous with constants $L_1,\ldots, L_m$ ($L_{\max} = \max_{j=1,\ldots,m}L_j$), and that a nonempty subset $J\subseteq\{1,\ldots,m\}$ exists such that $f_j$ is bounded below for all $j\in J$. 
	Let $\left\{X^k\right\}$ be the sequence of sets of nondominated points produced by \cref{alg::F-Newton} with $\sigma_k= 0$ for all $k$ and let $\left\{x_{j_k}\right\}$ be a refining linked sequence. Assume $\bar{k}$ is the \rev{first iteration index} such that $x_{j_k} = z_{j_{k-1}}^{k-1}$ for all $k>\bar{k}$. Then, for any $\epsilon>0$, after at most $k_\text{max}$ iterations we get $\|v(x_{j_{k_\text{max}}})\|\le \epsilon$, with $k_\text{max} = \bar{k} + k_\text{ref}$ and $k_\text{ref} = \mathcal{O}(\frac{1}{\epsilon^2})$.
\end{proposition}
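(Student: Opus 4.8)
The plan is to reduce \cref{prop:complexity_linked_fd} to a known worst-case complexity result for the single-point multi-objective descent method, exactly as was done for \cref{prop:compl_rls} in the FSD case, but now being slightly more careful because the refinement direction $d_c$ is not necessarily the steepest descent direction $v(x_c)$. The first observation is that, since $\sigma_k = 0$ for all $k$ and $\{x_{j_k}\}$ is a refining linked sequence with threshold index $\bar k$, for every $k > \bar k$ the point $x_{j_k}$ is produced from $x_{j_{k-1}}$ via steps \ref{step:search_dir}-\ref{step:line_search_N}: that is, $x_{j_k} = x_{j_{k-1}} + \alpha_{j_{k-1}}^{k-1} d_{j_{k-1}}$, where $d_{j_{k-1}}$ is either $v(x_{j_{k-1}})$ or a steepest-descent-related direction $v_D(x_{j_{k-1}})$ passing the test at step \ref{step:sdr}, and $\alpha_{j_{k-1}}^{k-1}$ is the Armijo stepsize at step \ref{step:line_search_N}. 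Hence $\{x_{j_k}\}_{k \ge \bar k}$ is precisely the sequence generated by a single-point Armijo-type multi-objective descent method with steepest-descent-related directions, started at $x_{j_{\bar k}}$.

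Next I would invoke the results of \cite{lapucci2024convergence} (together with \cite{fliege2019complexity}): under Lipschitz continuity of the gradients with constant $L_{\max}$ and boundedness below of $f_j$ for $j$ in the nonempty set $J$ — and noting that monotone decrease of $F$ along the sequence, guaranteed by the Armijo condition, ensures the relevant sublevel quantities are finite — the sufficient decrease obtained at each refining step is bounded below by a constant multiple of $\|v(x_{j_k})\|^2$. Concretely, combining the Armijo inequality $F(z_{j_k}^k) \le F(x_{j_k}) - \boldsymbol 1 \min\{1,\Gamma_1\}\gamma\alpha_{j_k}^k\|v(x_{j_k})\|^2$ (already derived in the previous proposition's proof) with a uniform lower bound on $\alpha_{j_k}^k$ — which follows from the Lipschitz constant, the backtracking factor $\delta$, and the bounds $\mathcal D(x_{j_k},d_{j_k}) \le -\min\{1,\Gamma_1\}\|v(x_{j_k})\|^2$ and $\|d_{j_k}\| \le \max\{1,\Gamma_2\}\|v(x_{j_k})\|$ from step \ref{step:sdr} — one gets a per-iteration decrease of at least $c\,\|v(x_{j_k})\|^4 / \|v(x_{j_k})\|^2 = c\,\|v(x_{j_k})\|^2$ in some component $f_j$, $j\in J$; I would phrase this precisely as in the cited complexity analysis rather than rederive it.

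Then the standard telescoping argument finishes the job: summing the decreases for $k = \bar k, \bar k + 1, \dots$ over the component(s) in $J$ that are bounded below, the total decrease is finite, so if $\|v(x_{j_k})\| > \epsilon$ for all $\bar k \le k \le \bar k + k_{\text{ref}}$, then $k_{\text{ref}} \cdot c\,\epsilon^2$ is bounded by a constant independent of $\epsilon$, yielding $k_{\text{ref}} = \mathcal O(1/\epsilon^2)$ and hence $k_{\max} = \bar k + k_{\text{ref}}$ with the claimed bound, with $\|v(x_{j_{k_{\max}}})\| \le \epsilon$. The only genuinely delicate point, which I would handle with care, is the uniform lower bound on the Armijo stepsize when $d_c = v_D(x_c)$ rather than $v(x_c)$: one must check that the ratio $|\mathcal D(x_c, d_c)| / \|d_c\|^2$ stays bounded away from zero, which is exactly what the two inequalities at step \ref{step:sdr} deliver (numerator $\ge \min\{1,\Gamma_1\}\|v(x_c)\|^2$, denominator $\le \max\{1,\Gamma_2\}^2\|v(x_c)\|^2$), so the backtracking terminates after a number of halvings that is bounded uniformly in $k$. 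With that in hand, the reduction to \cite{fliege2019complexity,lapucci2024convergence} is complete and the proof can be stated in a single short paragraph mirroring that of \cref{prop:compl_rls}.
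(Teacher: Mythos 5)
Your proposal is correct and follows essentially the same route as the paper: identify the tail $\{x_{j_k}\}_{k\ge\bar k}$ with a single-point Armijo-type descent method, verify via the test at step \ref{step:sdr} that the directions are steepest-descent-related with constants $c_1=\min\{1,\Gamma_1\}$ and $c_2=\max\{1,\Gamma_2\}$, and invoke the complexity result of \cite{lapucci2024convergence}. The paper's proof is just a more compact version of yours, delegating the stepsize lower bound and the telescoping argument entirely to the cited reference rather than sketching them.
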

\begin{proof}
	In order to prove the result, we shall note that the sequence $\{x_{j_k}\}_{k\ge \bar{k}}$ exactly coincides with the sequence produced by the single-point multi-objective descent method with Armijo-type line search: $x_{j_{k+1}} = x_{j_k} + \alpha_{j_k}^kd_{j_k}$. By \cite{lapucci2024convergence}, in order to prove the result it is sufficient to show that the sequence of descent directions $\{d_{j_k}\}_{k\ge \bar{k}}$ is steepest-descent-related, satisfying, for some $c_1,c_2>0$,
	$\mathcal{D}(x_{j_k},d_{j_k})\le-c_1\|v(x_{j_k})\|^2$ and $\|d_{j_k}\|\le c_2\|v(x_{j_k})\|$.
	
	Now, $d_{j_k}$ is either equal to $v(x_{j_k})$ or $v_D(x_{j_k})$. In the former case, the above conditions straightforwardly hold with $c_1=1$ and $c_2=1$.
	
	On the other hand, if $d_{j_k}=v_D(x_{j_k})$, from the control at step \ref{step:sdr} we are guaranteed that the above conditions hold with $c_1=\Gamma_1$ and $c_2=\Gamma_2$. 
	By setting $c_1=\min\{1,\Gamma_1\}$ and $c_2=\max\{1,\Gamma_2\}$ we thus get the thesis.
\end{proof}

\begin{remark}
	\rev{For the ease of presentation and to align with the result of \cref{prop:compl_rls}, the above proposition is stated for the ideal case of ${\sigma_k}$ being constantly set to zero. If we wanted to address the more general case of $\sigma_k\to 0$, we would have to take into account that the linked sequence could present ``null'' steps, for which $x_{j_k} = x_{j_{k+1}}$ because $\theta(x_{j_k})\ge -\sigma_k$. While the number of total iterations of the linked sequence could
	be arbitrarily large, depending on how $\sigma_k$ converges to zero, we could easily retrieve a complexity bound analogous to the one of \cref{prop:compl_rls} for the ``non-null'' steps, i.e., we could bound the number of iterations where the point in the refining linked sequence is actually processed. Notably, the entire computational cost corresponding to the refining linked sequence is indeed due to the non-null steps, as null steps do not require to carry out any operation.}
\end{remark}

Next, we provide results concerning conditions guaranteeing that Newton-type directions will indeed be steepest-descent-related, i.e., that $v_D(x_c) = v_N(x_c)$ will satisfy the conditions at step \ref{step:sdr} of \cref{alg::F-Newton}.

\begin{proposition}
	\label{prop::vN_sdr}
	Assume $\Gamma_1, \Gamma_2$ are set in \cref{alg::F-Newton} such that $\Gamma_1\le\frac{c_1}{2c_2^2}$ and $\Gamma_2\ge\frac{1}{c_1}$, with $c_2\ge c_1>0$. Further assume that direction $v_D(x_c)$ is a Newton-type direction, computed according to \eqref{eq:newt-type-dirs} with matrices $B_j(x_c)$ such that, for all $j=1,\ldots,m$, $$c_1\le \lambda_\text{min}(B_j(x_c))\le \lambda_\text{max}(B_j(x_c))\le c_2,$$ where $\lambda_\text{min}(B_j(x_c))$ and $\lambda_\text{max}(B_j(x_c))$ denote the minimum and maximum eigenvalues of $B_j(x_c)$ respectively. Then, $v_D(x_c) = v_N(x_c)$ always passes the tests at step \ref{step:sdr}.
\end{proposition}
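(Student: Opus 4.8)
The plan is to establish two inequalities about the Newton-type direction $v_N(x_c)$ under the eigenvalue bounds $c_1 \le \lambda_{\min}(B_j) \le \lambda_{\max}(B_j) \le c_2$, namely a lower bound on $-\mathcal{D}(x_c,v_N(x_c))$ in terms of $\|v(x_c)\|^2$ and an upper bound on $\|v_N(x_c)\|$ in terms of $\|v(x_c)\|$. The natural route is to exploit the variational characterizations of $v(x_c)$ and $v_N(x_c)$ as minimizers of the respective max-plus-quadratic models, and to play the two models off against each other by comparing the value each achieves at the other's minimizer.

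First I would recall the elementary facts noted in the excerpt: $\mathcal{D}(x_c,v(x_c)) = -\|v(x_c)\|^2$, and $\theta_N(x_c) \le \mathcal{D}(x_c,v_N(x_c))/2$ (from the proof of \cite[Theorem 5]{gonccalves2022globally}). Using $c_1 I \preceq B_j(x_c) \preceq c_2 I$, one gets the sandwich
\[
\mathcal{D}(x_c,d) + \tfrac{c_1}{2}\|d\|^2 \;\le\; \max_j\Big(\nabla f_j(x_c)^\top d + \tfrac12 d^\top B_j(x_c) d\Big) \;\le\; \mathcal{D}(x_c,d) + \tfrac{c_2}{2}\|d\|^2
\]
for every $d$. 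Minimizing the left bound over $d$ after rescaling (or simply plugging $d = v(x_c)/c_1$ into it, so that the $d$ that is ``steepest-descent-like'' for the quadratically-scaled model) would give $\theta_N(x_c) \le -\|v(x_c)\|^2/(2c_1)$: indeed, evaluating the middle quantity at a suitable multiple of $v(x_c)$ and using $\mathcal{D}(x_c,v(x_c)) = -\|v(x_c)\|^2$ (together with positive homogeneity of $\mathcal{D}$ in its second argument) bounds $\theta_N(x_c)$ from above. Combined with $\theta_N(x_c) \le \mathcal{D}(x_c,v_N(x_c))/2$ — wait, that inequality goes the wrong way; instead I would use the opposite sandwich direction: from optimality of $v_N$ and the lower sandwich, $\mathcal{D}(x_c,v_N(x_c)) + \tfrac{c_1}{2}\|v_N(x_c)\|^2 \le \theta_N(x_c) \le 0$, which already yields $\mathcal{D}(x_c,v_N(x_c)) \le -\tfrac{c_1}{2}\|v_N(x_c)\|^2$. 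To relate $\|v_N(x_c)\|$ back to $\|v(x_c)\|$, compare $\theta_N(x_c)$ with the value of the scaled model at $v(x_c)$: $\theta_N(x_c) \le \mathcal{D}(x_c,v(x_c)) + \tfrac{c_2}{2}\|v(x_c)\|^2 = -\|v(x_c)\|^2 + \tfrac{c_2}{2}\|v(x_c)\|^2$, hence $\mathcal{D}(x_c,v_N(x_c)) + \tfrac{c_1}{2}\|v_N(x_c)\|^2 \le (\tfrac{c_2}{2}-1)\|v(x_c)\|^2$. Using $\mathcal{D}(x_c,v_N(x_c)) \ge -\|v(x_c)\|\,\|v_N(x_c)\|$ (Cauchy–Schwarz applied to the defining max, since each $\nabla f_j(x_c)^\top v_N \ge -\|\nabla f_j(x_c)\|\|v_N\| \ge$ ... — more cleanly, $\mathcal{D}(x_c,v_N) \ge \mathcal{D}(x_c,v(x_c)) + \langle$gradient terms$\rangle$; I would instead bound $-\mathcal{D}(x_c,v_N(x_c)) \le \tfrac{c_1}{2}\|v_N(x_c)\|^2 + (\tfrac{c_2}{2}-1)\|v(x_c)\|^2$ ... this needs care) one obtains a quadratic inequality in $t = \|v_N(x_c)\|/\|v(x_c)\|$ of the form $\tfrac{c_1}{2} t^2 - t - (\tfrac{c_2}{2}-1) \le 0$ or similar, whose positive root is bounded by a constant expressible through $c_1, c_2$; choosing $\Gamma_2 \ge 1/c_1$ (and noting $c_2 \ge c_1$) is exactly what makes this root $\le \Gamma_2$. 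Then substituting $\|v_N(x_c)\| \le \Gamma_2\|v(x_c)\|$ back into $\mathcal{D}(x_c,v_N(x_c)) \le -\tfrac{c_1}{2}\|v_N(x_c)\|^2$ is not quite what is needed — rather, I would return to $\mathcal{D}(x_c,v_N(x_c)) + \tfrac{c_1}{2}\|v_N(x_c)\|^2 \le (\tfrac{c_2}{2}-1)\|v(x_c)\|^2$ is also not obviously giving $\mathcal{D} \le -\Gamma_1\|v(x_c)\|^2$; so the cleanest path for the first test is: from $\theta_N(x_c) \le -\|v(x_c)\|^2/(2c_2)$ (obtained by evaluating the model at an appropriate multiple of $v(x_c)$ and optimizing the scalar factor, getting the $c_2$ in the denominator) together with $2\theta_N(x_c) \ge \mathcal{D}(x_c,v_N(x_c))$ — no: the needed direction is $\theta_N(x_c) \ge \mathcal{D}(x_c,v_N(x_c))$ is false in general. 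I will therefore use $\mathcal{D}(x_c,v_N(x_c)) \le \theta_N(x_c) + 0 \le \theta_N(x_c)$ is also false. The correct fact is $\theta_N(x_c) = \mathcal{D}(x_c, v_N(x_c)) + \tfrac12 \max$-ish; more robustly, $\mathcal{D}(x_c,v_N(x_c)) \le 2\theta_N(x_c)$ holds because the quadratic term $\tfrac12 v_N^\top B_j v_N \ge 0$, so $\mathcal{D}(x_c,v_N) \le \max_j(\nabla f_j^\top v_N + \tfrac12 v_N^\top B_j v_N) = \theta_N(x_c) \le 0$, i.e., $\mathcal{D}(x_c,v_N(x_c)) \le \theta_N(x_c) \le -\|v(x_c)\|^2/(2c_2)$. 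Choosing $\Gamma_1 \le \tfrac{1}{2c_2} $ — but the statement asks $\Gamma_1 \le \tfrac{c_1}{2c_2^2}$, which is even smaller, hence certainly implies the first test with room to spare once the $\|v_N\|$ bound is folded in appropriately.

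The main obstacle, then, is bookkeeping: getting the two bounds with the exact constants $\tfrac{c_1}{2c_2^2}$ and $\tfrac{1}{c_1}$ rather than some cruder constants, which forces one to be careful about whether to bound $\theta_N$ via evaluation at $v(x_c)$ (giving a $c_2$) or at $v(x_c)/c_1$ (giving a $c_1$ in a product), and about using Cauchy–Schwarz sharply on $\mathcal{D}(x_c,v_N(x_c))$. Concretely I would: (1) show $\theta_N(x_c) \le -\tfrac{1}{2c_2}\|v(x_c)\|^2$ by minimizing $t \mapsto -t\|v(x_c)\|^2 + \tfrac{c_2}{2}t^2\|v(x_c)\|^2$ over $t \ge 0$ using $\mathcal{D}(x_c, t v(x_c)) = -t\|v(x_c)\|^2$; (2) show $\tfrac{c_1}{2}\|v_N(x_c)\|^2 \le \mathcal{D}(x_c,v(x_c)) - \mathcal{D}(x_c,v_N(x_c)) + \tfrac{c_2}{2}\|v(x_c)\|^2 - \theta_N(x_c)$-type relation, then apply $-\mathcal{D}(x_c,v_N(x_c)) \le \|v(x_c)\|\,\|v_N(x_c)\|$ (which follows since $\mathcal{D}(x_c, v_N(x_c)) \ge \nabla f_{j_0}^\top v_N$ for the index $j_0$ attaining $\theta(x_c)$'s max-expression at $d = v_N$ and then Cauchy–Schwarz, or more simply from $-\mathcal{D}(x_c, d) = -\max_j \nabla f_j(x_c)^\top d \le \max_j \|\nabla f_j(x_c)\| \|d\|$ combined with a bound $\max_j \|\nabla f_j(x_c)\| \le \|v(x_c)\|$ ... this last needs $\theta(x_c) \le -\tfrac12\|v(x_c)\|^2$ and the structure of the steepest-descent subproblem — I would instead lean on the simpler chain $-\mathcal{D}(x_c, v_N) \le \|v(x_c)\| \|v_N\|$ as a consequence of the KKT conditions of the $v(x_c)$ subproblem, where $v(x_c) = -\sum \lambda_j \nabla f_j(x_c)$ with $\lambda \ge 0$, $\sum \lambda_j = 1$, so $-\mathcal{D}(x_c,v_N) = -\max_j \nabla f_j^\top v_N \le -\sum_j \lambda_j \nabla f_j^\top v_N = v(x_c)^\top v_N \le \|v(x_c)\|\|v_N\|$), yielding the quadratic inequality $\tfrac{c_1}{2} t^2 - t - (\text{const}) \le 0$ in $t = \|v_N\|/\|v(x_c)\|$ whose root is $\le 1/c_1$ under $c_2 \ge c_1$ — giving the second test with $\Gamma_2 = 1/c_1$; (3) finally combine $\|v_N(x_c)\| \le \tfrac1{c_1}\|v(x_c)\|$ with $\mathcal{D}(x_c,v_N(x_c)) \le \theta_N(x_c) \le -\tfrac{1}{2c_2}\|v(x_c)\|^2$, but to land exactly on $-\Gamma_1\|v(x_c)\|^2$ with $\Gamma_1 = \tfrac{c_1}{2c_2^2}$ I would instead derive the sharper $\mathcal{D}(x_c, v_N(x_c)) \le -\tfrac{c_1}{2}\|v_N(x_c)\|^2$ from $\mathcal{D}(x_c,v_N) + \tfrac{c_1}{2}\|v_N\|^2 \le \theta_N(x_c) \le 0$, then note that the $\|v_N\|$ on the right cannot be too small — using $\theta_N(x_c) \le -\tfrac1{2c_2}\|v(x_c)\|^2$ and $\mathcal{D}(x_c,v_N) \ge -\|v(x_c)\|\|v_N\|$ forces $\|v_N\| \ge \tfrac{1}{c_2}\|v(x_c)\|$, whence $\mathcal{D}(x_c,v_N) \le -\tfrac{c_1}{2c_2^2}\|v(x_c)\|^2 = -\Gamma_1\|v(x_c)\|^2$. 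That closes both tests.
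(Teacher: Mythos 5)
The paper's own proof is a one-line appeal to \cite[Lemma 2.8]{lapucci2024convergence}, which states exactly the two bounds $\|v_N(x_c)\|\le \frac{1}{c_1}\|v(x_c)\|$ and $\mathcal{D}(x_c,v_N(x_c))\le -\frac{c_1}{2c_2^2}\|v(x_c)\|^2$; re-deriving that lemma from scratch is a legitimate blind strategy, and several of your intermediate steps are sound (the sandwich $\mathcal{D}(x_c,d)+\frac{c_1}{2}\|d\|^2\le \max_j(\nabla f_j(x_c)^\top d+\frac12 d^\top B_jd)\le \mathcal{D}(x_c,d)+\frac{c_2}{2}\|d\|^2$, the bound $\theta_N(x_c)\le-\frac{1}{2c_2}\|v(x_c)\|^2$ obtained by optimizing over multiples of $v(x_c)$, the inequality $\mathcal{D}(x_c,v_N(x_c))\le\theta_N(x_c)$, and the estimate $-\mathcal{D}(x_c,v_N(x_c))\le v(x_c)^\top v_N(x_c)\le\|v(x_c)\|\,\|v_N(x_c)\|$ via the KKT representation $v(x_c)=-\sum_j\lambda_j\nabla f_j(x_c)$).

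The gap is quantitative, and it matters because the proposition only assumes $\Gamma_1\le\frac{c_1}{2c_2^2}$ and $\Gamma_2\ge\frac{1}{c_1}$, so you must hit these exact constants. Your route does not: from $\mathcal{D}(x_c,v_N)+\frac{c_1}{2}\|v_N\|^2\le\theta_N\le 0$ and $-\mathcal{D}(x_c,v_N)\le\|v(x_c)\|\|v_N\|$ you get the quadratic inequality $\frac{c_1}{2}t^2-t\le \text{const}\cdot 1$ with nonnegative constant, whose positive root is $\frac{1+\sqrt{1+2c_1\cdot\text{const}}}{c_1}\ge\frac{2}{c_1}$, not $\le\frac{1}{c_1}$ as you assert; so you only obtain $\|v_N(x_c)\|\le\frac{2}{c_1}\|v(x_c)\|$, which fails the second test when $\Gamma_2=\frac{1}{c_1}$. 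Similarly, combining $\mathcal{D}(x_c,v_N)\le\theta_N\le-\frac{1}{2c_2}\|v(x_c)\|^2$ with $-\mathcal{D}(x_c,v_N)\le\|v(x_c)\|\|v_N\|$ only forces $\|v_N\|\ge\frac{1}{2c_2}\|v(x_c)\|$ (not $\frac{1}{c_2}$), whence $\mathcal{D}(x_c,v_N)\le-\frac{c_1}{2}\|v_N\|^2\le-\frac{c_1}{8c_2^2}\|v(x_c)\|^2$, which does not imply the first test when $\Gamma_1=\frac{c_1}{2c_2^2}$. The missing ingredient is the dual (KKT) characterization of the Newton-type subproblem: with $\nu$ the optimal multipliers, $g_\nu=\sum_j\nu_j\nabla f_j(x_c)$ and $B_\nu=\sum_j\nu_jB_j(x_c)$, one has $v_N(x_c)=-B_\nu^{-1}g_\nu$ and $\theta_N(x_c)=-\frac12 v_N(x_c)^\top B_\nu v_N(x_c)$. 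Together with $\|g_\nu\|\ge\min_{\lambda\in\Delta}\|g_\lambda\|=\|v(x_c)\|$ and the primal bound $\theta_N(x_c)\ge\frac{1}{c_1}\theta(x_c)=-\frac{1}{2c_1}\|v(x_c)\|^2$, these identities give $\|v_N(x_c)\|\ge\frac{1}{c_2}\|g_\nu\|\ge\frac{1}{c_2}\|v(x_c)\|$ and $-\frac{c_1}{2}\|v_N(x_c)\|^2\ge\theta_N(x_c)\ge-\frac{1}{2c_1}\|v(x_c)\|^2$, which deliver the exact constants $\frac{1}{c_1}$ and $\frac{c_1}{2c_2^2}$ without losing factors of two.
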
  
\begin{proof}
	By the result in \cite[Lemma 2.8]{lapucci2024convergence}, we have that
	$\|v_N(x_c)\|\le \frac{1}{c_1}\|v(x_c)\|$ and that $\mathcal{D}(x_c,v_N(x_c))\le-\frac{c_1}{2c_2^2}\|v(x_c)\|^2$.
	This proves the assertion.
\end{proof}

The above result ensures us that, if we choose to employ Newton-type directions in the refinement step, and if we employ some safeguarding technique in the definition of matrices $B_j(x_c)$, making them uniformly positive definite, there will actually be no need for the control at step \ref{step:sdr} and to eventually resort to the steepest descent direction.

The situation is somewhat different if we are interested in using the true Hessian matrices $\nabla^2 f_j(x_c)$ to construct $v_N(x_c)$, i.e., if we want to employ pure Newton's direction. In fact, in order to recover interesting convergence speed results for refining linked sequences, we need Newton's direction not to be altered in later iterations.
We prove by the following result that this is actually the case when a refining linked sequence gets close to a second-order Pareto-stationary solution. {Similarly as in \cref{prop:complexity_linked_fd}, we state the result assuming $\{\sigma_k\}$ is constantly zero. In the general case, analogous results hold if we just ignore ``null iterations'' in the linked sequence.}

\begin{proposition}
	\label{prop:ls_newton}
	Assume that $F$ is twice continuously differentiable over $\mathbb{R}^n$. Let $\{x_{j_k}\}$ be a refining linked sequence produced by \cref{alg::F-Newton} with $\sigma_k=0$ for all $k$, $\alpha_0=1$, $\gamma\in(0,\frac{1}{2})$ and $v_D(x_{j_k})=v_N(x_{j_k})$ computed at step \ref{step:search_dir} according to \eqref{eq:newt-type-dirs} with matrices $B_h(x_{j_k}) = \nabla^2 f_h(x_{j_k}) + \eta_h^kI$ for all $h=1,\ldots,m$, where 
	\begin{equation}
		\label{eq::def_eta}
		\eta_h^k = 
		\begin{cases} 
			-\lambda_\text{min}(\nabla^2 f_h(x_{j_k})) + \kappa, & \mbox{if }\lambda_\text{min}(\nabla^2 f_h(x_{j_k})) \le 0, \\ 
			0, & \mbox{otherwise},
		\end{cases}
	\end{equation}
	and $\kappa > 0$. Let $\bar{x}$ be a local Pareto optimal point s.t.\ $\nabla^2 f_h(\bar{x})$ is positive definite for all $h=1,\ldots,m$. Assume that $$\Gamma_1\le\frac{\omega^3}{2}\frac{\min\limits_{h=1,\ldots,m}\lambda_\text{min}(\nabla^2 f_h(\bar{x}))}{\max\limits_{h=1,\ldots,m}\lambda_\text{max}^2(\nabla^2 f_h(\bar{x}))}, \qquad \Gamma_2\ge\frac{1}{\omega\min\limits_{h=1,\ldots,m}\lambda_\text{min}(\nabla^2 f_h(\bar{x}))},$$ with $\omega \in (0, 1)$. Then, there exist a neighborhood $\mathcal{N}(\bar{x})$ of $\bar{x}$ and $0 < \rho \le r$ such that, if $x_{j_{\bar{k}}} \in \mathcal{B}(\bar{x},\rho) \subsetneq \mathcal{N}(\bar{x})$ for some $\bar{k}$ s.t.\ $x_{j_{k+1}} = z_{j_{k}}^k$ for all $k \ge \bar{k}$, the following properties hold for all $k\ge\bar{k}$:
	\begin{enumerate}[(i)]
		\item $\eta_h^k = 0$ for all $h=1,\ldots,m$, i.e., $v_{N}(x_{j_k})$ is the Newton's direction at $x_{j_k}$;
		\item $d_{j_k} = v_N(x_{j_k})$, i.e., $v_N(x_{j_k})$ satisfies the conditions at step \ref{step:sdr};
		\item $\alpha^k_{j_k}=1$, i.e., no backtracking is needed in the Armijo-type line search at step \ref{step:line_search_N};
		\item $x_{j_k}\in\mathcal{B}(x_{j_{\bar{k}}},r) \subsetneq \mathcal{N}(\bar{x})$.
	\end{enumerate}
	Furthermore, the sequence $\left\{x_{j_k}\right\}$ converges superlinearly to a local Pareto optimal point $x^\star \in \mathcal{B}(x_{j_{\bar{k}}},r)$. If $\nabla^2f_h(\cdot)$ are additionally Lipschitz continuous in $\mathcal{N}(\bar{x})$ for all $h=1,\ldots,m$, the convergence is quadratic.
\end{proposition}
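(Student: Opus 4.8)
The plan is to mimic the classical local analysis of (regularized) Newton's method in the single-objective case, adapted to the multi-objective line-search framework, and to proceed by an inductive argument that simultaneously establishes (i)--(iv) for every $k\ge\bar k$. First I would fix a neighbourhood $\mathcal N(\bar x)$ small enough that, by continuity of $\nabla^2 f_h$ and positive definiteness of $\nabla^2 f_h(\bar x)$, the Hessians $\nabla^2 f_h(x)$ are uniformly positive definite on $\mathcal N(\bar x)$, with eigenvalues pinched between $\omega\min_h\lambda_{\min}(\nabla^2 f_h(\bar x))$ and $\omega^{-1}\max_h\lambda_{\max}(\nabla^2 f_h(\bar x))$; this immediately forces $\eta_h^k=0$ (property (i)) and, invoking \cref{prop::vN_sdr} with $c_1=\omega\min_h\lambda_{\min}(\nabla^2 f_h(\bar x))$ and $c_2=\omega^{-1}\max_h\lambda_{\max}(\nabla^2 f_h(\bar x))$, yields property (ii) — i.e. the step-\ref{step:sdr} test always passes — provided $x_{j_k}\in\mathcal N(\bar x)$. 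The chosen bounds on $\Gamma_1,\Gamma_2$ are exactly what is needed to match the hypotheses of \cref{prop::vN_sdr} on that neighbourhood.

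Next I would establish the unit-stepsize property (iii). This is the usual ``Newton step satisfies Armijo with $\gamma<1/2$'' argument: Taylor-expand each $f_h(x_{j_k}+v_N(x_{j_k}))$ to second order, use that $v_N$ solves the min-max problem \eqref{eq:newt-type-dirs} with $B_h=\nabla^2 f_h$ together with the bound $\theta_N(x)\le \mathcal D(x,v_N(x))/2$ recalled in \cref{sec:search_directions}, and exploit Lipschitz continuity / uniform continuity of the Hessians on $\mathcal N(\bar x)$ to absorb the remainder into the slack $(\tfrac12-\gamma)|\mathcal D(x_{j_k},v_N(x_{j_k}))|$ once $\|v_N(x_{j_k})\|$ is small enough — which holds by shrinking $\rho$ and using $\|v_N(x_{j_k})\|\le c_1^{-1}\|v(x_{j_k})\|$ and the fact that $v$ vanishes at the Pareto-stationary point $\bar x$. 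Hence $\alpha_{j_k}^k=1$ and the refining update is the pure Newton update $x_{j_{k+1}}=x_{j_k}+v_N(x_{j_k})$. With this in hand, the local contraction estimate for the multi-objective Newton iteration (as in \cite{fliege2009newton}, obtained from the optimality system of \eqref{eq:newt-type-dirs}) gives $\|x_{j_{k+1}}-x^\star_{\text{loc}}\|=o(\|x_{j_k}-x^\star_{\text{loc}}\|)$ near a point where all Hessians are positive definite; a standard ``shrinking ball'' bookkeeping then yields property (iv), namely $x_{j_k}\in\mathcal B(x_{j_{\bar k}},r)\subsetneq\mathcal N(\bar x)$ for all $k$, provided $\rho$ is chosen sufficiently small relative to $r$ so the geometric sum of displacements stays below $r-\rho$.

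Finally, (iv) shows $\{x_{j_k}\}$ is Cauchy, so it converges to some $x^\star\in\mathcal B(x_{j_{\bar k}},r)$; passing to the limit in $\theta(x_{j_k})\le0$ and in the Newton recursion gives $v(x^\star)=0$, i.e. $x^\star$ is Pareto-stationary, and since all $\nabla^2 f_h(x^\star)$ are positive definite it is in fact a strict local Pareto optimum. The superlinear rate is then just the $o(\cdot)$ contraction applied along the tail; if the Hessians are Lipschitz on $\mathcal N(\bar x)$, the remainder in the contraction estimate is $O(\|x_{j_k}-x^\star\|^2)$, giving quadratic convergence. I expect the main obstacle to be the unit-stepsize step (iii): one must carefully control the second-order Taylor remainder for \emph{all} objectives simultaneously and relate $\max_h\big(f_h(x_{j_k}+v_N)-f_h(x_{j_k})\big)$ to $\mathcal D(x_{j_k},v_N)$ through the min-max structure of \eqref{eq:newt-type-dirs}, rather than to a single $\nabla f_h^\top v_N$ term as in the scalar case; the inequality $\theta_N\le\mathcal D/2$ and the uniform Hessian bounds are the levers that make this go through, but getting the neighbourhood radius dependence clean is the delicate part. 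The rest — (i), (ii), (iv), and the asymptotic rates — follows the familiar template once (iii) is secured.
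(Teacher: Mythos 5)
Your proposal follows essentially the same route as the paper's proof: a neighborhood of $\bar{x}$ on which the Hessian eigenvalues are pinched between $\omega\min_h\lambda_{\min}(\nabla^2 f_h(\bar{x}))$ and $\omega^{-1}\max_h\lambda_{\max}(\nabla^2 f_h(\bar{x}))$ gives (i) and, via \cref{prop::vN_sdr} with exactly your $c_1,c_2$, gives (ii); acceptance of the unit step follows from $F(x_{j_k}+v_N(x_{j_k}))\le F(x_{j_k})+\boldsymbol{1}(2\gamma\theta_N(x_{j_k}))\le F(x_{j_k})+\boldsymbol{1}\gamma\mathcal{D}(x_{j_k},v_N(x_{j_k}))$ with $2\gamma<1$; (iv) comes from the shrinking-ball argument; and an induction plus the local Newton theory yields the rates. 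The only difference is that the paper outsources the unit-step, ball-containment, and rate arguments to \cite[Thm.\ 5.1, Cor.\ 5.2, Thm.\ 6.1, Cor.\ 6.2]{fliege2009newton} and \cite[Thm.\ 5]{gonccalves2022globally}, whereas you sketch re-derivations of those same facts.
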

\begin{proof}
	By the continuity of the second derivatives and the positive definiteness of $\nabla^2 f_h(\bar{x})$ ($h=1,\ldots,m$), there exists a neighborhood $\mathcal{N}(\bar{x})$ of $\bar{x}$ such that, for all $x \in \mathcal{N}(\bar{x})$ and $h=1,\ldots,m$, 
	\begin{equation}
		\label{eq::la_mi_ma}
		0 < \omega\lambda_\text{min}(\nabla^2 f_h(\bar{x})) \le \lambda_\text{min}(\nabla^2 f_h(x)), \quad \lambda_\text{max}(\nabla^2 f_h(x)) \le \frac{1}{\omega}\lambda_\text{max}(\nabla^2 f_h(\bar{x})).
	\end{equation}
	
	Now, let us consider an iteration $k \ge \bar{k}$ such that $x_{j_k} \in \mathcal{N}(\bar{x})$. By equation \eqref{eq::la_mi_ma}, we get that, for all $h=1,\ldots,m$, $\lambda_\text{min}(\nabla^2 f_h(x_{j_k})) > 0$ and, as a consequence, $\eta_h^k = 0$. Moreover, given \eqref{eq::la_mi_ma} and the definitions of $\Gamma_1, \Gamma_2$, we can set 
	\begin{equation*}
		c_1 = \omega\min\limits_{h=1,\ldots,m}\lambda_\text{min}(\nabla^2 f_h(\bar{x})), \quad c_2 = \frac{1}{\omega}\max\limits_{h=1,\ldots,m}\lambda_\text{max}(\nabla^2 f_h(\bar{x}))
	\end{equation*}
	so that \cref{prop::vN_sdr} holds, i.e., $v_N(x_{j_k})$ satisfies the conditions at step \ref{step:sdr}. We then conclude that, at iteration $k$, the pure Newton's direction is employed.
	
	Now, from \cite[Corollary 5.2]{fliege2009newton} and \cite[Theorem 5.1]{fliege2009newton}, we obtain that
	\begin{equation*}
		F(x_{j_k} + v_N(x_{j_k})) \le F(x_{j_k}) + \boldsymbol{1}(2\gamma\theta_N(x_{j_k}))  \le F(x_{j_k}) + \boldsymbol{1}\gamma\mathcal{D}(x_{j_k}, v_N(x_{j_k}))
	\end{equation*}
	with $2\gamma < 1$ by assumption. Thus, $\alpha_0 = 1$ satisfies the sufficient decrease condition of the Armijo-type line search at step \ref{step:line_search_N}. Following a similar reasoning as in the proof of \cite[Theorem 5]{gonccalves2022globally}, we can use again \cite[Theorem 5.1]{fliege2009newton} to have $x_{j_{k + 1}} \in \mathcal{B}(x_{j_{\bar{k}}},r) \subsetneq \mathcal{N}(\bar{x})$. 
	
	Since $x_{j_{\bar{k}}} \in \mathcal{B}(x_{j_{\bar{k}}},r)$, we can proceed by inductive arguments to prove that items (i), (ii), (iii), (iv) hold for all $k \ge \bar{k}$. Moreover, this last result indicates that the sequence $\{x_{j_k}\}_{k \ge \bar{k}}$ coincides with the one generated by the Newton method \cite{fliege2009newton}. Therefore, all the convergence results directly follow from \cite[Theorem 5.1, Corollary 5.2, Theorem 6.1, Corollary 6.2]{fliege2009newton}.
\end{proof}

We conclude this section showing that the Barzilai-Borwein type direction from \cite{chen2023barzilai} makes the control at step \ref{step:sdr} irrelevant if $\Gamma_1$ and $\Gamma_2$ are suitably chosen.
\begin{proposition}
	\label{prop:BB}
	Assume $\Gamma_1, \Gamma_2$ are set in \cref{alg::F-Newton} such that $\Gamma_1\le \frac{a_{{\text{min}}}}{4a_{\text{max}}^2}$ and $\Gamma_2\ge \frac{1}{a_{{\text{min}}}}$, with $a_{\text{max}}\ge a_{\text{min}}>0$. Further assume that direction $v_D(x_c) = v_a(x_c)$ is computed according to \eqref{eq:bb_direction}, with $a_{\text{min}}\le a_j\le a_{\text{max}}$ for all $j=1,\ldots,m$. Then, $v_D(x_c)$ always passes the tests at step \ref{step:sdr}.
\end{proposition}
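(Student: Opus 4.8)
The plan is to verify directly that, under the stated choice of $\Gamma_1,\Gamma_2$, the vector $v_a(x_c)$ satisfies both inequalities in the safeguard at step~\ref{step:sdr}, so that $d_c$ is indeed set equal to $v_D(x_c)=v_a(x_c)$. Since step~\ref{step:sdr} is reached only inside the branch $\theta(x_c)<-\sigma_k\le 0$ of step~\ref{step.if_cond_N}, we may assume $v(x_c)\neq 0$. Fix $x=x_c$, write $g_j=\nabla f_j(x)$, and recall from \cref{sec:basics} that $\mathcal D(x,v(x))=-\|v(x)\|^2$, hence $\theta(x)=-\tfrac12\|v(x)\|^2$. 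The first auxiliary fact I would record (it is the direct analogue of the above and is either elementary or borrowed from \cite{chen2023barzilai}) is that $\theta_a(x)=-\tfrac12\|v_a(x)\|^2$: the stationarity condition of problem~\eqref{eq:bb_direction} gives $v_a(x)=-\sum_j\lambda_j g_j/a_j$ for some $\lambda_j\ge 0$, $\sum_j\lambda_j=1$, together with the complementarity relations, from which $\max_j g_j^\top v_a(x)/a_j=-\|v_a(x)\|^2$ and the identity follows. An immediate consequence, using $a_j\ge a_{\text{min}}$, is that $g_j^\top v_a(x)\le -a_j\|v_a(x)\|^2\le -a_{\text{min}}\|v_a(x)\|^2$ for every $j$, and therefore
\[
\mathcal D(x,v_a(x))=\max_{j}\, g_j^\top v_a(x)\le -a_{\text{min}}\|v_a(x)\|^2 .
\]

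The core of the argument is a two-sided comparison between $\|v_a(x)\|$ and $\|v(x)\|$, obtained by the standard device of inserting a scaled copy of one direction into the minimization problem defining the other. For the lower bound on $\|v_a(x)\|$, I would plug $d=t\,v(x)$ (with $t>0$) into the problem defining $\theta_a(x)$; since $g_j^\top v(x)\le \mathcal D(x,v(x))=-\|v(x)\|^2<0$ and $a_j\le a_{\text{max}}$, one has $g_j^\top v(x)/a_j\le g_j^\top v(x)/a_{\text{max}}$, whence $\theta_a(x)\le -t\|v(x)\|^2/a_{\text{max}}+\tfrac12 t^2\|v(x)\|^2$; minimizing the right-hand side over $t>0$ (attained at $t=1/a_{\text{max}}$) yields $\theta_a(x)\le -\|v(x)\|^2/(2a_{\text{max}}^2)$, i.e.\ $\|v_a(x)\|\ge \|v(x)\|/a_{\text{max}}$. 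Symmetrically, plugging $d=t\,v_a(x)$ ($t>0$) into the problem defining $\theta(x)$ and using the bound $\mathcal D(x,v_a(x))\le -a_{\text{min}}\|v_a(x)\|^2$ derived above gives $\theta(x)\le -t a_{\text{min}}\|v_a(x)\|^2+\tfrac12 t^2\|v_a(x)\|^2$; minimizing over $t>0$ (attained at $t=a_{\text{min}}$) gives $\theta(x)\le -\tfrac12 a_{\text{min}}^2\|v_a(x)\|^2$, i.e.\ $\|v_a(x)\|\le \|v(x)\|/a_{\text{min}}$.

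It then remains only to assemble the pieces. The second test at step~\ref{step:sdr} holds because $\|v_a(x)\|\le \|v(x)\|/a_{\text{min}}\le \Gamma_2\|v(x)\|$, since $\Gamma_2\ge 1/a_{\text{min}}$. The first test holds because, combining $\mathcal D(x,v_a(x))\le -a_{\text{min}}\|v_a(x)\|^2$ with $\|v_a(x)\|\ge \|v(x)\|/a_{\text{max}}$, we get $\mathcal D(x,v_a(x))\le -\tfrac{a_{\text{min}}}{a_{\text{max}}^2}\|v(x)\|^2\le -\Gamma_1\|v(x)\|^2$, using $\Gamma_1\le \tfrac{a_{\text{min}}}{4a_{\text{max}}^2}\le \tfrac{a_{\text{min}}}{a_{\text{max}}^2}$. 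Hence $v_a(x_c)$ passes both tests. I do not expect a genuine obstacle: the one point deserving attention is that, unlike in \cref{prop::vN_sdr} where an upper bound on $\|v_D(x_c)\|$ suffices for the second test, here the \emph{first} test requires controlling $\mathcal D(x,v_a(x))$, which is proportional to $\|v_a(x)\|^2$, so a \emph{lower} bound on $\|v_a(x)\|$ in terms of $\|v(x)\|$ is also indispensable; the identity $\theta_a(x)=-\tfrac12\|v_a(x)\|^2$ and the elementary sign manipulations exploiting $a_{\text{min}}\le a_j\le a_{\text{max}}$ are the small ingredients that make the two scalar minimizations go through.
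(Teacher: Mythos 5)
Your proof is correct, and it reaches the two required inequalities by a genuinely more self-contained route than the paper's. The paper's argument for the first test sandwiches $\theta_a(x_c)$ between $\tfrac{1}{a_{\min}}\mathcal{D}(x_c,v_a(x_c))$ and $\tfrac{1}{a_{\max}}\theta_N(x_c)$ (with $B_j=a_{\max}I$), then invokes $\theta_N(x)\le \mathcal{D}(x,v_N(x))/2$ and \cite[Lemma 2.8]{lapucci2024convergence} to land on $\mathcal{D}(x_c,v_a(x_c))\le-\tfrac{a_{\min}}{4a_{\max}^2}\|v(x_c)\|^2$; for the second test it again routes through $\theta_N$ (now with $B_j=a_{\min}I$), computing $\theta_N(x_c)=-\tfrac{a_{\min}}{2}\|v_N(x_c)\|^2$ from the dual representation to conclude $\|v_a(x_c)\|\le\|v_N(x_c)\|\le\tfrac{1}{a_{\min}}\|v(x_c)\|$. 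You instead derive the two-sided comparison $\|v(x)\|/a_{\max}\le\|v_a(x)\|\le\|v(x)\|/a_{\min}$ directly, by evaluating each of the two defining minimization problems at a scaled copy of the other's minimizer and optimizing the resulting quadratic in $t$, and you extract $\mathcal{D}(x,v_a(x))\le-a_{\min}\|v_a(x)\|^2$ from the componentwise consequence of $\max_j \nabla f_j(x)^\top v_a(x)/a_j=-\|v_a(x)\|^2$. This avoids the external lemmas and the dual problem entirely, and in fact yields the slightly sharper bound $\mathcal{D}(x_c,v_a(x_c))\le-\tfrac{a_{\min}}{a_{\max}^2}\|v(x_c)\|^2$ (without the factor $4$), so the hypothesis $\Gamma_1\le\tfrac{a_{\min}}{4a_{\max}^2}$ is met with room to spare; the price is that you must carry out the elementary one-dimensional minimizations yourself and justify the sign manipulations $g_j^\top v(x)<0$ needed to compare $a_j$ with $a_{\max}$, which you do. Both arguments are sound; yours is the more elementary and citation-free of the two.
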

\begin{proof}
	By the definition of $v_a(x_c)$, we have
	\begin{align*}
		\max_{j=1,\ldots,m}\frac{\nabla f_j(x_c)^\top v_a(x_c)}{a_j}+\frac{1}{2}\|v_a(x_c)\|^2& = \min_{d\in\mathbb{R}^n} \max_{j=1,\ldots,m}\frac{\nabla f_j(x_c)^\top d}{a_j}+\frac{1}{2}\|d\|^2\\& \le \min_{d\in\mathbb{R}^n}	\max_{j=1,\ldots,m}\frac{\nabla f_j(x_c)^\top d}{a_{\text{max}}}+\frac{1}{2}\|d\|^2,
	\end{align*} 
	where the inequality follows from $a_j\le a_{\text{max}}$ for all $j$ and the fact that the two minima are both attained by directions such that $\nabla f_j(x_c)^\top d\le 0$ for all $j$. 
	Then, we can  write
\begin{gather*}
	\min_{d\in\mathbb{R}^n}	\max_{j=1,\ldots,m}\frac{\nabla f_j(x_c)^\top d}{a_{\text{max}}}+\frac{1}{2}\|d\|^2=\frac{1}{a_{\text{max}}}\min_{d\in\mathbb{R}^n}	\max_{j=1,\ldots,m}{\nabla f_j(x_c)^\top d}+\frac{a_{\text{max}}}{2}d^\top I d
	 = \frac{1}{a_{\text{max}}}\theta_{N}(x_c),
\end{gather*}
where $\theta_N(x_c)$ is obtained from \eqref{eq:newt-type-dirs} with $B_j = a_{\text{max}} I$ for all $j$. Recalling that $\theta_N(x)\le \mathcal{D}(x,v_N(x))/2$ and then \cite[Lemma 2.8]{lapucci2024convergence}, we get 

\begin{gather*}
	\frac{1}{a_{\text{max}}}\theta_{N}(x_c)\le \frac{1}{2a_{\text{max}}}\mathcal{D}(x_c,v_{N}(x_c))\le \frac{1}{2a_{\text{max}}}\left(\frac{-a_{\text{max}}}{2(a_{\text{max}})^2}\|v(x_c)\|^2\right)=-\frac{1}{4a_{\text{max}}^2}\|v(x_c)\|^2.
\end{gather*}
On the other hand:
	\begin{multline*}
		\max_{j=1,\ldots,m}\frac{\nabla f_j(x_c)^\top v_a(x_c)}{a_j}+\frac{1}{2}\|v_a(x_c)\|^2\ge
		\max_{j=1,\ldots,m} \frac{\nabla f_j(x_c)^\top v_a(x_c)}{a_j}\\\ge \max_{j=1,\ldots,m} \frac{\nabla f_j(x_c)^\top v_a(x_c)}{a_{{\text{min}}}}=\frac{1}{a_{\text{min}}}\mathcal{D}(x_c,v_a(x_c)),
	\end{multline*}
	where the second inequality comes from $v_a(x_c)$ being a descent direction and $a_j\ge a_{\text{min}}$ for all $j$.
	Putting everything together, we get
	\begin{equation}
		\label{eq:bb_proof_1}
		\mathcal{D}(x_c,v_a(x_c))\le- \frac{a_{{\text{min}}}}{4a_{\text{max}}^2}\|v(x_c)\|^2.
	\end{equation}

	Now, we note that $v_a(x_c)$ is the steepest descent direction for $(\frac{f_1(x)}{a_1},\ldots,\frac{f_m(x)}{a_m})^\top$ at $x_c$; thus, we have  
	$\theta_a(x_c) = -\frac{1}{2}\|v_a(x_c)\|^2$ (see, e.g., \cite[Eq.\ (16)]{gonccalves2022globally}).
	We also have
	\begin{align*}
		\theta_a(x_c)&=\max_{j=1,\ldots,m}\frac{\nabla f_j(x_c)^\top v_a(x_c)}{a_j}+\frac{1}{2}\|v_a(x_c)\|^2\\&\ge \frac{1}{a_{{\text{min}}}}\left(\max_{j=1,\ldots,m}\nabla f_j(x_c)^\top v_a(x_c) + \frac{a_{\text{min}}}{2}\|v_a(x_c)\|^2\right)\\&\ge \frac{1}{a_{{\text{min}}}}\min_{d\in\mathbb{R}^n}\max_{j=1,\ldots,m}\nabla f_j(x_c)^\top d + \frac{a_{\text{min}}}{2}d^\top Id = \frac{1}{a_{{\text{min}}}}\theta_{N}(x_c),
	\end{align*}
	where now $\theta_{N}(x_c)$ is obtained according to \eqref{eq:newt-type-dirs} with $B_j = a_{\text{min}}I$ for all $j$.
	From the solution of the dual problem, letting $\nu_1,\ldots, \nu_m$ the Lagrange multipliers s.t.\ $\nu_j \ge 0$ for all $j \in \{1,\ldots, m\}$ and $\mathbf{1}^\top\nu = 1$, we get (see, e.g., \cite[Eq.\ (10)]{gonccalves2022globally}) $$\theta_N(x_c) = -\frac{1}{2}v_N(x_c)^T\left[\sum_{j=1}^{m}\nu_j a_{\text{min}}I\right]v_N(x_c)=\frac{-a_{\text{min}}}{2}\|v_N(x_c)\|^2.$$
	We thus obtain
	\begin{align*}
		\theta_a(x_c)\ge \frac{1}{a_{{\text{min}}}}\theta_{N}(x_c) = \frac{1}{a_{{\text{min}}}}\left(-\frac{a_{\text{min}}}{2}\|v_N(x_c)\|^2\right) =  -\frac{1}{2}\|v_N(x_c)\|^2. 
	\end{align*}
	Therefore, we can finally write
	$\|v_a(x_c)\|^2 = -2\theta_a(x_c)\le \|v_N(x_c)\|^2,$
	i.e.,
	$$\|v_a(x_c)\|\le \|v_N(x_c)\|\le \frac{1}{a_{\text{min}}}\|v(x_c)\|,$$
	where the last inequality follows from \cite[Lemma 2.8]{lapucci2024convergence}. The above result, combined with \eqref{eq:bb_proof_1}, completes the proof.
\end{proof}

\subsection{Convergence Results for the Sequence of Sets of Points}
\label{sec:conv_front}
In this section, we are interested in analyzing the properties of the sequence $\{X^k\}$ considering the sets $X^k$ as a whole, rather than looking at their individual points. 

In order to do so, we need to give some definitions (for reference, see e.g.\ \cite{lacour2017box}).

\begin{definition}
	\label{def:HV}
	Let ${\zeta}\in\mathbb{R}^m$ be a reference point and let $Y\subseteq \mathbb{R}^m$ be a (possibly infinite) set of points. We define the \textit{dominated region} as 
	$$\Lambda(Y) = \left\{y\in\mathbb{R}^m\mid \exists\, \bar{y}\in Y:\; \bar{y}\le y\le {\zeta}\right\}.$$
	In addition, the \textit{hypervolume} associated to $Y$ is defined as the volume (or the Lebesgue measure) of the set $\Lambda(Y)$, i.e., $\mathcal{M}(\Lambda(Y))$, and is denoted by $V(Y)$.
\end{definition}
Note that if \rev{$X \subseteq \mathbb{R}^n$ is a set of points, $Y = F(X)$ is the image of $X$ through $F$ and $Y_\text{nd} = \{y \in F(X) \mid \nexists \bar{y} \in F(X) \text{ s.t.\ } \bar{y} \lneqq y\}$ is the image of the non-dominated points in $X$,} we have $\Lambda(Y_\text{nd}) = \Lambda(Y)$. Thus, the dominated region and the hypervolume of the \rev{image} set \rev{$Y$} only depend on the nondominated points in the set \rev{$X$}. We are thus particularly interested in looking at the hypervolume of \textit{stable sets}, i.e., \rev{image} sets of mutually nondominated points.

\rev{For the sake of notation simplicity, we will denote by $\Lambda_F(X)$ the dominated region $\Lambda(Y) = \Lambda(F(X))$. We will similarly denote the corresponding hypervolume $V(Y) = V(F(X))$ as $V_F(X)$.}

Hypervolume is often employed as a measure to compare Pareto front approximations in multi-objective optimization problems.
A graphical representation of dominated region and hypervolume for a bi-objective optimization problem is shown in \cref{fig::dr}. Note that, if the set $Y\subseteq\mathbb{R}^m$ is finite, the dominated region is obtained as the union of hyper-boxes. 

We will also exploit the following fundamental property of hypervolume, that generalizes \cite[Lemma 3.1]{custodio2021worst}. 
\begin{lemma}
	\label{lemma:HV}
	Let $\zeta\in\mathbb{R}^m$ be a reference point and let \rev{$X$ be a set of points such that $Y = F(X)$ is a stable set and, for all $y\in Y$, $y\le \zeta$}. Let \rev{$\bar{x}\in X$} and $\mu\in\mathbb{R}^\rev{n}$ s.t.\ \rev{$F(\mu)<F(\bar{x})$}. Then, the set $Z = \rev{F(X\cup\{\mu\}\setminus\{\bar{x}\})}$ is such that $$V(Z)-V(Y)\ge \prod_{j=1}^{m}(\rev{f_j(\bar{x})-f_j(\mu)})>0.$$
\end{lemma}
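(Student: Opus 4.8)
The plan is to prove the claimed inequality by carefully describing the incremental change in the dominated region when we replace $\bar x$ with the new point $\mu$ satisfying $F(\mu)<F(\bar x)$. The key observation is that $\Lambda_F(X) \subseteq \Lambda_F(X\cup\{\mu\}\setminus\{\bar x\})$, i.e., the dominated region can only grow: indeed any $y$ with $F(\bar x)\le y\le\zeta$ also satisfies $F(\mu)<F(\bar x)\le y\le\zeta$, and for any other point $x'\in X$, $x'\ne\bar x$, its contribution to the dominated region is unchanged. Hence $V(Z)-V(Y)=\mathcal M\big(\Lambda_F(X\cup\{\mu\}\setminus\{\bar x\})\setminus\Lambda_F(X)\big)\ge 0$, and it remains to bound this measure from below.

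First I would isolate the box $B=\{y\in\mathbb{R}^m\mid F(\mu)\le y\le F(\bar x)\}$, which has Lebesgue measure exactly $\prod_{j=1}^m(f_j(\bar x)-f_j(\mu))$, a strictly positive quantity since $f_j(\mu)<f_j(\bar x)$ for every $j$ (this uses $F(\mu)<F(\bar x)$ componentwise, the strict inequality being essential). Next I would argue that the interior of $B$ is entirely contained in $\Lambda_F(X\cup\{\mu\}\setminus\{\bar x\})$ but disjoint from $\Lambda_F(X)$. The first inclusion is immediate: for $y$ in (the closure of) $B$ we have $F(\mu)\le y\le F(\bar x)\le\zeta$, so $\mu$ dominates $y$ in the required weak sense. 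For the disjointness, suppose some $y$ in the interior of $B$ lay in $\Lambda_F(X)$; then some $x'\in X$ would satisfy $F(x')\le y< F(\bar x)$ — where the strict inequality $y<F(\bar x)$ holds on the interior of $B$ — which would give $F(x')\lneqq F(\bar x)$, contradicting that $Y=F(X)$ is a stable (mutually nondominated) set. Therefore $B\setminus\partial B\subseteq\Lambda_F(X\cup\{\mu\}\setminus\{\bar x\})\setminus\Lambda_F(X)$, and taking Lebesgue measures yields
\begin{equation*}
V(Z)-V(Y)\ \ge\ \mathcal M(B)\ =\ \prod_{j=1}^{m}\big(f_j(\bar x)-f_j(\mu)\big)\ >\ 0,
\end{equation*}
which is the desired bound.

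The main obstacle I anticipate is handling the set-theoretic bookkeeping cleanly — in particular making the disjointness argument airtight, since one must be careful that a point of $\Lambda_F(X)$ is dominated by \emph{some} element of $X$ possibly different from $\bar x$, and that the stability of $Y$ rules this out precisely because $F(x')\le y$ together with $y<F(\bar x)$ forces a dominance relation inside $X$. A secondary subtlety is the role of boundaries: one should work with the open box (or equivalently note that $\partial B$ has zero measure) to avoid spurious intersections with $\Lambda_F(X)$ along the face $\{y_j=f_j(\bar x)\}$. Neither issue is deep, but stating them carefully is what makes the proof rigorous; the measure computation itself and the monotonicity $\Lambda_F(X)\subseteq\Lambda_F(X\cup\{\mu\}\setminus\{\bar x\})$ are routine. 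This argument also transparently generalizes \cite[Lemma 3.1]{custodio2021worst}, which is the single-point / finite-set version of the same fact.
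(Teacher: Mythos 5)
Your proposal is correct and follows essentially the same route as the paper's proof: establish monotonicity of the dominated region, then show the box between $F(\mu)$ and $F(\bar x)$ is added to the dominated region while being disjoint from $\Lambda(Y)$ by stability. The only cosmetic differences are that the paper works with the half-open box $[F(\mu),F(\bar x))$ instead of the open interior, and passes through the auxiliary set $\hat Z=F(X\cup\{\mu\})$ rather than handling the removal of $\bar x$ directly in the monotonicity step.
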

\begin{proof}
	We start by noting that, being \rev{$F(\mu)<F(\bar{x})$} and since the dominated region and the hypervolume of a\rev{n image} set only depend on \rev{the} nondominated solutions, we have $\Lambda(Z)=\Lambda(\hat{Z})$ and $V(Z) = V(\hat{Z})$, with $\hat{Z} = \rev{F(X\cup\{\mu\})}$.
	Now, we prove two properties.
	\begin{enumerate}[(a)]
		\item If $w\in\Lambda(Y)$, then $w\in\Lambda(\hat{Z})$ - i.e., $\Lambda(\hat{Z})\supseteq\Lambda(Y)$ and thus measure of the dominated region cannot decrease if a new nondominated point is added. 
		\item Let \rev{$[F(\mu),F(\bar{x})) = [f_1(\mu),f_1(\bar{x}))\times\ldots\times[f_m(\mu),f_m(\bar{x}))$}. \rev{We have, f}or all $w\in[\rev{F(\mu),F(\bar{x})})$, $w\notin \Lambda(Y)$ and $w\in\Lambda(\hat{Z})$ - i.e., the hyperrectangle $[\rev{F(\mu),F(\bar{x})})$ is entirely included in the new dominated region $\Lambda(\hat{Z})$, whereas it was entirely excluded by the old dominated region $\Lambda(Y)$.
	\end{enumerate}
	We prove the two properties one at a time.
	\begin{enumerate}[(a)]
		\item Let $w\in \Lambda(Y)$; then, by the definition of $\Lambda$, $w\le \zeta$ and there exists $\rev{x'\in X}$ such that $\rev{F(x') = y'}\le w$. Since $y'\in Y\subset \hat{Z}$, we have $y'\le w\le \zeta$ with $y'\in \hat{Z}$, and the property is thus proved.   
		\item By definition, for any $w\in[\rev{F(\mu),F(\bar{x})})$ we have $\rev{F(\mu)}\le w$ and $w<\rev{F(\bar{x})}$; moreover $\rev{F(\bar{x})}\le \zeta$; thus, $\rev{F(\mu)}\le w\le \zeta$, i.e., $w\in \Lambda(\hat{Z})$.
		
		Now, $Y$ is stable, so for any $\rev{x'\in X}$ there must exist $j\in\{1,\ldots,m\}$ s.t.\ $ \rev{f_j(x')\ge f_j(\bar{x})}$ and then $w_j<\rev{f_j(\bar{x})}\le \rev{f_j(x')}$. Thus, there is no $\rev{x' \in X}$ such that $\rev{F(x') =}\ y' \le w$ and then $w\notin \Lambda(Y)$.   
	\end{enumerate}
	Now, by property (a) we get that $$V(Z)-V(Y) = V(\hat{Z})-V(Y) = \mathcal{M}(\Lambda(\hat{Z})\setminus \Lambda(Y)).$$
	From property (b) we know that  $[\rev{F(\mu),F(\bar{x})})\subseteq \Lambda(\hat{Z})\setminus \Lambda(Y)$. Thus, we can write
	$$\mathcal{M}(\Lambda(\hat{Z})\setminus \Lambda(Y))\ge \mathcal{M}([\rev{F(\mu),F(\bar{x})})) = \prod_{j=1}^{m}(\rev{f_j(\bar{x})-f_j(\mu)}).$$
	Combining the last two equations we get the thesis.
\end{proof}
The result from above Lemma is graphically explained in \cref{fig::dr_lemma}.

\begin{figure}
	\subfloat[The dominated region $\Lambda(Y)$ for the set $Y$, given the reference point $\zeta$. The hypervolume of $Y$ is the area of $\Lambda(Y)$.\label{fig::dr}]{\includegraphics[width=0.49\textwidth]{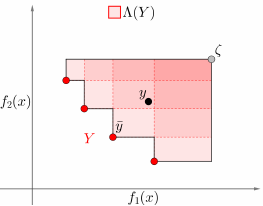}}
	\hfil
	\subfloat[\rev{Given a stable set $Y = F(X)$, when a point $\mu$ is added that dominates $\bar{x}\in X$, the dominated region enlarges to at least contain the full hyperbox $[F(\mu),F(\bar{x}))$.\label{fig::dr_lemma}}]{\includegraphics[width=0.49\textwidth]{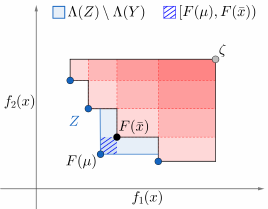}}
	\caption{Graphical representation of \cref{def:HV} and \cref{lemma:HV}.}
	\label{fig::dr_gen}
\end{figure}

Before turning to the convergence analysis, we give a last definition that allows to introduce a novel Pareto-stationarity measure for sets of mutually nondominated points.
\begin{definition}
	Let $\mathcal{X}$ be the set of all sets  $X\subseteq\mathbb{R}^n$ of mutually nondominated points w.r.t.\ $F$, i.e., $X\in \mathcal{X}$ if $F(X)$ is a stable set. We define the map $\Theta:\mathcal{X}\to\mathbb{R}$ as
	$$\Theta(X) = \inf_{x\in X}\theta(x).$$ 
\end{definition}
Note that, for a set $X$ with finite cardinality, the infimum is in fact a minimum, i.e., there exists $x\in X$ such that $\Theta(X)=\theta(x)$. In this latter case, function $\Theta(\cdot)$ thus returns the common steepest descent value $\theta(\cdot)$ of the ``least'' Pareto-stationary point\rev{s} in the set $X$. $\Theta(X)$ of course always takes nonpositive values and is zero if and only if all the points in $X$ are Pareto-stationary.
At this point, we need to state an assumption on the implementation of the algorithm.
\begin{assumption}
	\label{ass:order}
	 At each iteration $k$ of \cref{alg::F-Newton},  the first point to be processed in the for loop of steps \ref{step:main_for}-\ref{step:end_loop} belongs to the set
	$\argmin_{x\in X^k}\theta(x)$. 
\end{assumption}

We are finally able to provide a convergence result concerning the asymptotic “Pareto-stationarity”
of the sets $\{X^k\}$.
\begin{theorem}
	\label{prop:big_theta}
	Let $X^0$ be a set of mutually nondominated points and $x_0\in X^0$ be a point such that the set $\mathcal{L}(x_0) = \bigcup_{j=1}^{m}\left\{x\in\mathbb{R}^n\mid f_j(x)\le f_j(x_0)\right\}$ is compact.
	Let $\left\{X^k\right\}$ be the sequence of sets of nondominated points produced by \cref{alg::F-Newton} under \cref{ass:order}. 
	Then, 
	\begin{enumerate}[(i)]
		\item if $\sigma_k=\sigma>0$ for all $k$, there exists $\bar{k}$ such that $\Theta(X^{{k}})\ge -\sigma$ for all $k\ge \bar{k}$;
		\item  if $\sigma_k\to 0$, $\lim_{k\to \infty}\Theta(X^k) = 0$.
	\end{enumerate}
\end{theorem}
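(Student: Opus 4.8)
The plan is to use the hypervolume $V_F(X^k)$, with respect to a conveniently fixed reference point, as a monotone and bounded potential, and to show that at every ``bad'' iteration (one with $\Theta(X^k)<-\sigma_k$) it strictly increases by an amount controlled by the refinement stepsize; boundedness then forces those stepsizes to vanish, which contradicts the line search analysis. First I would fix $\zeta\in\mathbb{R}^m$ with $\zeta_j=\max_{x\in\mathcal{L}(x_0)}f_j(x)$ and set $l_j=\min_{x\in\mathcal{L}(x_0)}f_j(x)$; by \cref{lemma:compact_set} every point handled by the algorithm lies in the compact set $\mathcal{L}(x_0)$, so the hypothesis ``$y\le\zeta$'' of \cref{lemma:HV} is always met and $0\le V_F(\hat X^k)\le\prod_{j=1}^{m}(\zeta_j-l_j)=:V^{\ast}<\infty$. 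Next I would note that each $X^k$ is finite, so $\Theta(X^k)=\min_{x\in X^k}\theta(x)$ is attained, and that $V_F(\hat X^k)$ is non-decreasing over the whole iteration: each set update in \cref{alg::F-Newton} either deletes points dominated by another point of the set (which does not change $\Lambda_F$) or inserts a point (which can only enlarge $\Lambda_F$, by property (a) established in the proof of \cref{lemma:HV}). Hence $\{V_F(X^k)\}$ is non-decreasing and bounded by $V^{\ast}$, so it converges and in particular $V_F(X^{k+1})-V_F(X^k)\to0$.

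For the core estimate, set $\tau=\sigma$ in case (i); in case (ii) argue by contradiction: if $\Theta(X^k)\not\to0$ then, since $\Theta(X^k)\le0$, there are $\epsilon>0$ and infinitely many $k$ with $\Theta(X^k)<-\epsilon$, and we set $\tau=\epsilon$. Let $K$ be the (assumed infinite) set of indices with $\Theta(X^k)<-\tau$; in case (i) $\sigma_k=\sigma=\tau$ for every $k$, and in case (ii) $\sigma_k\to0$ so $\sigma_k<\tau$ for all large $k$; discarding finitely many indices, we may assume $\sigma_k\le\tau$ for all $k\in K$. Under \cref{ass:order}, the first point processed at iteration $k\in K$, say $x_c^k$, satisfies $\theta(x_c^k)=\Theta(X^k)<-\tau\le-\sigma_k$, so the test at step \ref{step.if_cond_N} passes and a genuine refinement step is performed with $\alpha_c^k>0$ (as in \cref{lemma:ls_N}): $z_c^k=x_c^k+\alpha_c^k d_c^k$ with, by the Armijo condition at step \ref{step:line_search_N} and the control at step \ref{step:sdr} (exactly as in the proof of \cref{prop:complexity_linked_fd}), $f_j(x_c^k)-f_j(z_c^k)\ge-\gamma\alpha_c^k\mathcal{D}(x_c^k,d_c^k)\ge\min\{1,\Gamma_1\}\gamma\alpha_c^k\|v(x_c^k)\|^2$ for every $j$. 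Since $\|v(x_c^k)\|^2=-2\theta(x_c^k)>2\tau$, we get $f_j(x_c^k)-f_j(z_c^k)>2\min\{1,\Gamma_1\}\gamma\tau\,\alpha_c^k>0$, so $z_c^k$ strictly dominates $x_c^k$ and \cref{lemma:HV} applies (at step \ref{step:add_zk_N}, with $\bar x=x_c^k$, $\mu=z_c^k$); combined with the monotonicity above this yields $V_F(X^{k+1})-V_F(X^k)\ge\prod_{j=1}^{m}\big(f_j(x_c^k)-f_j(z_c^k)\big)\ge\big(2\min\{1,\Gamma_1\}\gamma\tau\big)^m(\alpha_c^k)^m$. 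Summing over $k\in K$ and using $V_F(X^k)\le V^{\ast}$ gives $\sum_{k\in K}(\alpha_c^k)^m<\infty$, hence $\alpha_c^k\to0$ as $k\to\infty$, $k\in K$.

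The contradiction is then obtained exactly as in the global convergence proof for linked sequences. By compactness of $\mathcal{L}(x_0)$ there is $K_1\subseteq K$ with $x_c^k\to\bar x$; by continuity of $\theta$, $\theta(\bar x)\le-\tau<0$, so $v(\bar x)\ne0$, and since step \ref{step:sdr} guarantees $\|d_c^k\|\le\max\{1,\Gamma_2\}\|v(x_c^k)\|$, the $d_c^k$ stay bounded, so $d_c^k\to\bar d$ along some $K_2\subseteq K_1$. Because $\alpha_c^k\to0$, for every $q\in\mathbb{N}$ the trial step $\alpha_0\delta^q$ eventually violates the Armijo test at step \ref{step:line_search_N}, so some index $\tilde h(k)$ satisfies $f_{\tilde h(k)}(x_c^k+\alpha_0\delta^q d_c^k)>f_{\tilde h(k)}(x_c^k)+\gamma\alpha_0\delta^q\mathcal{D}(x_c^k,d_c^k)$; passing to subsequences along which $\tilde h(k)\equiv\tilde h$, letting $k\to\infty$, then $q\to\infty$, and invoking \cite[Lemma 4]{fliege2000steepest} gives $\mathcal{D}(\bar x,\bar d)\ge0$. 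On the other hand $\mathcal{D}(x_c^k,d_c^k)\le-\min\{1,\Gamma_1\}\|v(x_c^k)\|^2$ passes to the limit as $\mathcal{D}(\bar x,\bar d)\le-\min\{1,\Gamma_1\}\|v(\bar x)\|^2<0$, a contradiction. In case (i) this shows $K$ is finite, i.e.\ $\Theta(X^k)\ge-\sigma$ for all large $k$; in case (ii) it rules out $\Theta(X^k)\not\to0$, so $\Theta(X^k)\to0$.

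The delicate points, and the part I expect to require the most care, are the two preliminary observations: verifying that the hypervolume never decreases through the several set updates performed within a single iteration, and verifying that \cref{ass:order} genuinely converts the condition $\Theta(X^k)<-\sigma_k$ into a quantifiable Armijo-type enlargement of the dominated region at that iteration. Once $\alpha_c^k\to0$ is available, the rest is the standard line-search-failure contradiction already employed for linked sequences.
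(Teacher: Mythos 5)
Your proposal is correct and follows essentially the same route as the paper's proof: hypervolume as a monotone, bounded potential, \cref{ass:order} to ensure the least-stationary point undergoes a genuine refinement step, \cref{lemma:HV} to convert the Armijo decrease into a quantified hypervolume gain, and the standard line-search-failure contradiction once the stepsizes vanish. The only (immaterial) differences are that you bound $\|v(x_c^k)\|^2=-2\theta(x_c^k)>2\tau$ directly instead of passing through continuity of $v$ at the accumulation point, and you justify the increment via intra-iteration monotonicity of the hypervolume at step \ref{step:add_zk_N}, whereas the paper routes the same estimate through \cref{lemma:lemma_N} and the set $\{y^k\}\cup X^k\setminus\{x^k\}$.
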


\begin{proof}
	We begin the proof showing that a reference point $\zeta\in\mathbb{R}^m$ exists such that $F(\xi)\le \zeta$ for all $\xi\in X^k$, for all $k\in\{0,1,\ldots\}$.
	To this aim, let 
	$\bar{\zeta}\in\mathbb{R}^m$ such that $\bar{\zeta}_j = \max_{x\in\mathcal{L}(x_0)}f_j(x),$
	which is well-defined being $F$ continuous and $\mathcal{L}(x_0)$ compact. 
	Let $k$ be any iteration and $\xi\in X^k$. By \cref{lemma:compact_set} we have $\xi\in\mathcal{L}(x_0)$. Therefore, for all $h\in\{1,\ldots,m\}$, we have 
	$f_h(\xi)\le \max_{x\in\mathcal{L}(x_0)}f_h(x) = \bar{\zeta}_h,$
	and thus 
	\begin{equation}
		\label{eq:reference}
		F(\xi)\le \bar{\zeta}\quad\text{ for all } \xi\in X^k,\;\text{for all } k.
	\end{equation}
	
	Now let us assume by contradiction that the thesis is false, in the two cases $\sigma_k=\sigma>0$ for all $k$ and $\sigma_k\to0$, respectively:
	\begin{enumerate}[(i)]
		\item there exists an infinite subsequence $K\subseteq\{0,1,\ldots\}$ such that, \rev{for all $k\in K$ sufficiently large}, we have $\Theta(X^k)<-\sigma$;
		\item there exists an infinite subsequence $K\subseteq\{0,1,\ldots\}$ and a value $\epsilon>0$ such that, \rev{for all $k\in K$  sufficiently large}, we have $\Theta(X^k)<-\epsilon$.
	\end{enumerate}
	We can thus analyze the two cases at once, assuming that there exists an infinite subsequence $K\subseteq\{0,1,\ldots\}$ and a value $\bar{\sigma}>0$ such that, \rev{for all $k\in K$ and sufficiently large}, we have $\Theta(X^k)<-\bar{\sigma}$. {Since either $\bar{\sigma}=\sigma_k$ from case (i) or $\sigma_k\to0$ from case (ii), we also have that $\Theta(X^k)<-\sigma_k$ for all $k\in K$ sufficiently large.}
	
	\rev{Now, let $\{x^k\}$ be the sequence of points such that, for all $k$, $x^k$ is the first point to be processed in the for loop of steps \ref{step:main_for}-\ref{step:end_loop}. By  \cref{ass:order},
	$x^k\in\argmin_{x\in X^k}\theta(x),$
	i.e., $\{x^k\}$ is a sequence of ``least Pareto-stationary points'' in $\{X^k\}$, with $\theta(x^k) = \Theta(X^k)$. 
    } By \cref{lemma:compact_set}, we know that $\{x^k\}\subseteq\mathcal{L}(x^0)$, and so does $\{x^k\}_K$ which thus admits accumulation points.

	Let $\bar{x}$ be such an accumulation point, i.e., there exists $K_1\subseteq K$ such that $x^k\to \bar{x}$ for $k\in K_1$, $k\to \infty$. 
	Let us define $z^k = x^k + \alpha_kd_k$ as the point obtained at step \ref{step:z_N} of the algorithm while processing point $x^k$; by the continuity of $v(\cdot)$, we get $v(x^k)\to v(\bar{x})$ for $k\in K_1$, $k\to\infty$.
	By the second condition in the control at step \ref{step:sdr} of the algorithm, either $d_k = v(x^k)$ or $\|d_k\|\le \Gamma_2 \|v(x^k)\|$; therefore, $\|d_k\|\le \max\{1,\Gamma_2\}\|v(x^k)\|$. Since $v(x^k)\to v(\bar{x})$, the sequence $d_k$ is bounded for $k\in K_1$. Moreover, $\alpha_k\in[0,\alpha_0]$, which is a compact set. Therefore, there exists a further subsequence $K_2\subseteq K_1$ such that, for $k\in K_2$, $k\to\infty$,
	$$\lim_{\substack{k\to\infty\\k\in K_2}}\alpha_k= \bar{\alpha}\in[0,\alpha_0],\qquad\qquad \lim_{\substack{k\to\infty\\k\in K_2}}d_k= \bar{d},\qquad\qquad \lim_{\substack{k\to\infty\\k\in K_2}}z^k= \bar{x} + \bar{\alpha}\bar{d} = \bar{z}.$$
	
	\noindent By the definition of $\{x^k\}$, $K$ and $\{\sigma_k\}$, we have for all $k\in K$ sufficiently large that $\theta(x^k) = \Theta(X^k)<-\bar{\sigma}\le -\sigma_k\le0$. Then, $\alpha_k$ has certainly be obtained, at step \ref{step:line_search_N}, by the line search along the direction $d_k$, and we have $$F(z^k)\le F(x^k) +\boldsymbol{1} \gamma \alpha_k \mathcal{D}(x^k,d_k).$$
	
	By the instructions of the algorithm (in particular from the first condition at step \ref{step:sdr}) we have $\mathcal{D}(x^k,d_k)\le -\Gamma_1\|v(x^k)\|^2$ or $\mathcal{D}(x^k,d_k) = \mathcal{D}(x^k,v(x^k))= -\|v(x^k)\|^2$, therefore
	\begin{equation}
		\label{eq:front-suf-dec}
		F(z^k)\le F(x^k) - \boldsymbol{1}\min\left\{1,\Gamma_1\right\}\gamma \alpha_{k} \|v(x^k)\|^2.
	\end{equation}

	By the continuity of $\theta$, we also have, taking the limits for $k\in K_2$, $k\to \infty$, $$\theta(\bar{x}) = \lim_{\substack{k\to\infty\\k\in K_2}}\theta(x^k)\le -\bar{\sigma}<0.$$
	Then, $\bar{x}$ is not Pareto-stationary and $\|v(\bar{x})\|=\eta_1>0$. Then, by the continuity of $v$ and of the norm function, there exists a value $\eta_2$ such that $0 < \eta_2\le \eta_1$ and $\|v(x^k)\|\ge \eta_2$ for all $k\in K_2$ sufficiently large. 
	Plugging this inequality into \cref{eq:front-suf-dec}, we get for $k\in K_2$ sufficiently large
	\begin{equation}
		\label{eq:suff_dec_front_eps}
		F(z^k)\le F(x^k)-\boldsymbol{1}\min\left\{1,\Gamma_1\right\}\gamma \alpha_k\eta_2^2.
	\end{equation}
	
	We now consider the behavior of the hypervolume metric along the sequence of sets $\{X^k\}$. 
	By the instructions of the algorithm, a point is removed from the current set of solutions only when a new point is added that dominates it. In other words, if $\xi\in X^k$ and $\xi\notin X^{k+1}$, there exists $\xi_1\in X^{k+1}$ such that $F(\xi_1)\lneqq F(\xi)$. Therefore, $(F(X^{k+1}\cup X^k))_\text{nd} = (F(X^{k+1}))_\text{nd} = F(X^{k+1})$, where the last equality follows from the nondominance property of $\{X^k\}$ (\cref{lemma:prelims_N}); moreover, it is straightforward to observe that, since $X^k\subseteq(X^{k+1}\cup X^k)$, we have $\Lambda_F(X^k)\subseteq\Lambda_F(X^{k+1}\cup X^k)$.
	Thus, by the properties of the dominated region, we have:
	$$\Lambda_F(X^{k+1}) = \Lambda((F(X^{k+1}\cup X^k))_\text{nd}) = \Lambda_F(X^{k+1}\cup X^k) \supseteq \Lambda_F(X^k).$$
	Then, we can also write
	$V_F(X^{k+1})\ge V_F(X^k),$
	i.e., the sequence $\{V_F(X^k)\}$ is monotone nondecreasing and thus \rev{it either diverges to $+\infty$ or} admits limit $\bar{V}$. We can note that \rev{this is, in fact, the second case, with $\bar{V}$ being finite. I}ndeed, similarly as $\bar{\zeta}$, let $\pi\in\mathbb{R}^m$ be such that $\pi_j = \min_{x\in\mathcal{L}(x_0)} f_j(x)$. We then observe that $\Lambda_F(X^k)\subseteq\{y\in\mathbb{R}^m\mid \pi\le y\le \bar{\zeta}\}$, the latter set being a compact set and thus having a finite measure $M$. Therefore, $V_F(X^k)\le M<\infty$ \rev{and the sequence $\{V_F(X^k)\}$ must be then convergent to a finite limit $\bar{V}$.}

	Now, by \cref{lemma:lemma_N}, a point $y^k$ exists, for all $k$, such that $y^k\in X^{k+1}$ with $F(y^k)\le F(z^k)$.
	We have $(X^{k+1}\cup X^k)\supseteq X^k\cup \{y^k\}\supseteq X^k$, so that,
	by the properties of the dominated region, we can also write:
	$$\Lambda_F(X^{k+1})=\Lambda_F(X^{k+1}\cup X^k)\supseteq \Lambda_F(\{y^{k}\}\cup X^k)\supseteq \Lambda_F(X^k).$$
	and thus
	\begin{equation}
		\label{eq:hv_inc}
		V_F(X^{k+1}) \ge V_F(\{y^{k}\}\cup X^k) \ge V_F(X^k).
	\end{equation}
	We shall now note that $F(y^k)\le F(z^k)<F(x^k)$; thus, $x^k$ is dominated by $y^k$ and
	\begin{equation}
		\label{eq:hv_same}
		V_F(\{y^{k}\}\cup X^k) = V_F(\{y^{k}\}\cup X^k\setminus\{x^k\}).
	\end{equation} 
	We shall also note that, by \cref{lemma:prelims_N}, for all $k$ the set $X^k$ contains mutually nondominated points, i.e., $F(X^k)$ is a stable set. Recalling \cref{eq:reference}, we have that all the assumptions of \cref{lemma:HV} are satisfied when we evaluate $V_F(\{y^{k}\}\cup X^k\setminus\{x^k\})$. In particular, we get
	\begin{equation}
		\label{eq:hv_chain}
		V_F(\{y^{k}\}\cup X^k\setminus\{x^k\})-V_F(X^k)\ge \prod_{j=1}^m(f_j(x^k)-f_j(y^k))\ge \prod_{j=1}^m(f_j(x^k)-f_j(z^k)).
	\end{equation}
	Putting together \cref{eq:suff_dec_front_eps}, \cref{eq:hv_inc}, \cref{eq:hv_same} and \cref{eq:hv_chain}, we finally obtain that 
	$$V_F(X^{k+1})-V_F(X^k)\ge V_F(\{y^{k}\}\cup X^k\setminus\{x^k\}) - V_F(X^k) \ge \left(\min\left\{1,\Gamma_1\right\}\gamma \alpha_k\eta_2^2\right)^m.$$ 
	Recalling that $V_F(X^k)\to \bar{V}$, we can take the limits for $k\in K_2$, $k\to\infty$ to obtain
	$$\lim_{\substack{k\to\infty\\k\in K_2}}\left(\min\left\{1,\Gamma_1\right\}\gamma \alpha_k\eta_2^2\right)^m \le 0.$$
	Since $\min\left\{1,\Gamma_1\right\}>0$, $\eta_2>0$ and $\gamma>0$, we necessarily have that $\alpha_k\to 0$ for $k\in K_2$, $k\to \infty$.

	Since $\alpha_k$ is defined at step \ref{step:line_search_N} and, for $k\in K_2$, $\alpha_{k}\to 0$, given any $q\in\mathbb{N}$, for all $k\in K_2$ large enough we necessarily have $\alpha_{k}< \alpha_0\delta^q$. The step $\alpha=\alpha_0\delta^q$ hence does not satisfy the Armijo condition $F\left(x^k+\alpha d_k\right)\le F(x^k)+\boldsymbol{1}\gamma \alpha \mathcal{D}(x^k, d_k)$, i.e., for some $\tilde{h}(k)$ we have $$f_{\tilde{h}(k)}\left(x^k+\alpha_0\delta^q d_k\right) > f_{\tilde{h}(k)}(x^k)+\gamma \alpha_0\delta^q \mathcal{D}(x^k, d_k).$$
	Taking the limits along a suitable subsequence such that $\tilde{h}(k)= \tilde{h}$, we get $$f_{\tilde{h}}\left(\bar{x}+\alpha_0\delta^q\bar{d}\right)\ge f_{\tilde{h}}(\bar{x})+\gamma \alpha_0\delta^q\mathcal{D}(\bar{x}, \bar{d}).$$
	Being $q$ arbitrary and recalling \cite[Lemma 4]{fliege2000steepest}, it must be $\mathcal{D}(\bar{x},\bar{d})\ge 0$. 
	Yet, we know that $\mathcal{D}(x^k,d_k)\le -\min\left\{1,\Gamma_1\right\}\|v(x^k)\|^2$. In the limit we obtain
	$\mathcal{D}(\bar{x},\bar{d})\le -\min\left\{1,\Gamma_1\right\}\|v(\bar{x})\|^2<0$. 
	We finally get a contradiction, which completes the proof.	
\end{proof}

\begin{remark}
	\rev{Although  \cref{ass:order} may initially appear strong for establishing results over the entire sequence of sets, it is necessary to prevent a specific (and practically unlikely) degenerate behavior of the algorithm: a point in the population never undergoing the refinement step because it is consistently dominated, yet without exhibiting sufficient decrease, before being considered for optimization. Specifically, let $x^k\in X^k$ be one of the ``least Pareto-stationary'' points in the current set, i.e., $\theta(x^k) = \Theta(X^k)$. Without making further assumptions, during iteration $k$ a point $y'$ dominating $x^k$ might be found while processing some other point $x'\in X^k$ such that $x' \not\in \argmin_{x \in X^k}\theta(x)$. The point $x^k$ would be filtered out of $\hat{X}^k$ before getting processed, i.e., before doing the corresponding refinement step. The new point $y'$, however, might not provide a sufficient decrease of $F$ w.r.t.\ $F(x^k)$. Iterating this situation, we would end up with a sequence of points that is steadily improved, but at a too slow pace to reach stationarity.
		
	Furthermore,  \cref{ass:order} can arguably be considered not restrictive for \cref{alg::F-Newton}. Indeed, once the gradients and values of $\theta$ are computed at the beginning of iteration $k$ to search for the $\argmin_{x\in X^k}\theta(x)$, this information can be stored and reused later in the iteration when processing the points in $X^k$. In the end, the increase in the computational cost is only represented by the computation of the gradients of points that get filtered out of the list before getting processed. In later iterations especially, the latter is a rather marginal circumstance.
	
	We also note that our above claim is supported by the robust numerical evidence presented later in \cref{sec::computational_experiments}, where the experiments were conducted with the assumption embedded in the algorithm implementation.}
\end{remark}

\begin{remark}
	\label{rem:hypervol}
	Point (ii) of \cref{prop:big_theta} \rev{obviously} implies that, when $\sigma_k\to 0$, for any tolerance $\epsilon>0$ we reach $\Theta(X^k)>-\epsilon$ in a finite number of iterations. By point (i) of \cref{prop:big_theta}, this also holds true when $\sigma_k$ is a positive constant $\sigma$, as long as $\epsilon\ge\sigma$. We could thus set a threshold $\epsilon$ on the value of $\Theta(X^k)$ to define a stopping condition for the algorithm. However, such a stopping condition might be activated when there is still room to improve the spread of the solution. 
	We shall thus observe, by similar reasonings as those in the proof of \cref{prop:big_theta}, that the improvement in the hypervolume $V_F(X^k)$ also goes to zero. The stopping condition 
	\begin{equation}
		\label{eq::hypervolume}
		\frac{V_F(X^{k+1})-V_F(X^k)}{V_F(X^k)}<\varepsilon_{hv}
	\end{equation}
	is therefore enough to guarantee finite termination of the algorithm and it might be more suitable in practice. 
\end{remark}

By \cref{prop:big_theta} we immediately deduce that, under the additional requirement of \cref{ass:order}, we can obtain a stronger result concerning the convergence of any sequence of points $\{x^k\}$ such that $x^k\in X^k$, and not just linked sequences.
\begin{corollary}
	Let $\left\{X^k\right\}$ be the sequence of sets of nondominated points produced by \cref{alg::F-Newton} under the assumptions of \cref{prop:big_theta} and with $\sigma_k\to0$. Let $\left\{x^k\right\}$ be any sequence such that $x^k\in X^k$ for all $k$, then it admits accumulation points and every accumulation point is Pareto-stationary for problem \cref{eq:mo_prob}.
\end{corollary}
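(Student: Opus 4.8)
The plan is to derive the corollary almost immediately from \cref{lemma:compact_set} and point (ii) of \cref{prop:big_theta}, via a simple squeezing argument on the stationarity measure $\theta$.

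First I would establish existence of accumulation points: by \cref{lemma:compact_set}, every $x^k\in X^k$ satisfies $x^k\in\mathcal{L}(x_0)$, so the whole sequence $\{x^k\}$ lies in the compact set $\mathcal{L}(x_0)$ and hence admits accumulation points. Next, for any fixed $k$, since $x^k\in X^k$ and $X^k\in\mathcal{X}$, the definition of $\Theta$ gives $\Theta(X^k)=\inf_{x\in X^k}\theta(x)\le \theta(x^k)$; combined with the fact that $\theta$ is always nonpositive, this yields the sandwich $\Theta(X^k)\le\theta(x^k)\le 0$ for all $k$. Applying point (ii) of \cref{prop:big_theta} (valid because $\{X^k\}$ is produced under \cref{ass:order}, $\mathcal{L}(x_0)$ is compact, and $\sigma_k\to0$), we have $\Theta(X^k)\to 0$, and therefore $\theta(x^k)\to 0$ as well.

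Finally, let $\bar x$ be any accumulation point of $\{x^k\}$, say $x^k\to\bar x$ along a subsequence $K$. Using the continuity of the mapping $\theta(\cdot)$ (recalled in \cref{sec:basics}), we pass to the limit along $K$ to obtain $\theta(\bar x)=\lim_{k\to\infty,\,k\in K}\theta(x^k)=0$. Since a point is Pareto-stationary if and only if $\theta$ vanishes there, $\bar x$ is Pareto-stationary for problem \cref{eq:mo_prob}, which concludes the argument.

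I do not anticipate a genuine obstacle here: the corollary is essentially a restatement of \cref{prop:big_theta}(ii) at the level of individual points. The only point requiring a little care is making explicit the two inequalities $\Theta(X^k)\le\theta(x^k)$ and $\theta(x^k)\le 0$ that produce the squeeze, and then invoking continuity of $\theta$ to transfer the limit to the accumulation point; everything else is routine.
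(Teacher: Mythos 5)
Your argument is correct and coincides with the paper's own proof: both establish the existence of accumulation points via \cref{lemma:compact_set} and compactness of $\mathcal{L}(x_0)$, then squeeze $\Theta(X^k)\le\theta(x^k)\le 0$ using \cref{prop:big_theta}(ii) and the continuity of $\theta$ to conclude $\theta(\bar{x})=0$. No gaps; the only cosmetic difference is that you spell out the sandwich inequalities slightly more explicitly than the paper does.
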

\begin{proof}
	From \cref{lemma:compact_set}, we have that $\{x^k\}\subseteq \mathcal{L}(x_0)$ and thus admits cluster points.
	By the definition of $\Theta(\cdot)$, we know that
	$0\ge \theta(x^k)\ge \Theta(X^k)$,
	for all $k$. Taking the limit along any subsequence $K$ such that $x^k\to \bar{x}$ for $k\in K$, $k\to\infty$, recalling \cref{prop:big_theta}, we have $\theta(\bar{x}) =0$. This completes the proof.
\end{proof}

Similarly as for sequences of points, we can accompany the asymptotic convergence result with an interesting worst case complexity bound for the number of iterations required to drive $\Theta(X^k)$ above a given threshold. 
\begin{theorem}
	\label{prop:complexity_set}
	Assume the gradients $\nabla f_1(x),\ldots,\nabla f_m(x)$ are Lipschitz continuous with constants $L_1,\ldots,L_m$ ($L_\text{max} = \max_{j=1,\ldots,m}L_j$). Let $\{X^k\}$ be the sequence generated by \cref{alg::F-Newton} under the assumptions of \cref{prop:big_theta} and let $V^*=V_F(\mathcal{L}(x_0))$ (which is a finite value by the compactness of $\mathcal{L}(x_0)$). 
	Then, $\{X^k\}$ is such that, for any $\epsilon$ such that	
	\begin{enumerate}[(i)]
		\item $\epsilon\ge \sigma$ in the case $\sigma_k=\sigma>0$ for all $k$, or
		\item $\epsilon>0$ in the case $\sigma_k\to 0$,
	\end{enumerate} 
  	at most $k_\text{max}$ iterations are needed to produce an iterate set $X^k$ such that $\Theta(X^k)>-\epsilon$, where 
	$$k_\text{max}\le \frac{V^*-V_F(X^0)}{(2\gamma\min\{1,\Gamma_1\}\min\{\alpha_0,{\Delta_\text{low}}\}\epsilon)^m}= \mathcal{O}\left(\frac{1}{\epsilon^{m}}\right),$$
	where $\Delta_\text{low} = \frac{\Gamma_1(1-\gamma)}{\Gamma_2^2L_\text{max}}$. 
	Moreover, the total number of iterations $N_{\text{it}_\epsilon}$ such that $\Theta(X^k)\le-\epsilon$ is also bounded by
	$$N_{\text{it}_\epsilon}\le \frac{V^*-V_F(X^0)}{(2\gamma\min\{1,\Gamma_1\}\min\{\alpha_0,{\Delta_\text{low}}\}\epsilon)^m}= \mathcal{O}\left(\frac{1}{\epsilon^{m}}\right).$$
\end{theorem}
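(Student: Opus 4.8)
The plan is to use the hypervolume $V_F(X^k)$ as a potential function, turning the qualitative ``the hypervolume keeps growing at non-stationary iterations'' argument from the proof of \cref{prop:big_theta} into a quantitative one. First I would recall from that proof the existence of a reference point $\bar\zeta$ (namely $\bar\zeta_j=\max_{x\in\mathcal{L}(x_0)}f_j(x)$, finite by compactness) with $F(\xi)\le\bar\zeta$ for every $\xi\in X^k$ and every $k$, the fact that $\{V_F(X^k)\}$ (computed w.r.t.\ $\bar\zeta$) is monotone nondecreasing, and that $\Lambda_F(X^k)\subseteq\Lambda_F(\mathcal{L}(x_0))$; hence $V_F(X^0)\le V_F(X^k)\le V_F(\mathcal{L}(x_0))=V^*<\infty$ for all $k$, so the nonnegative increments $V_F(X^{k+1})-V_F(X^k)$ have total sum at most $V^*-V_F(X^0)$.

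Next I would establish a uniform positive lower bound on this increment at any iteration $k$ where $\Theta(X^k)\le-\epsilon$ \emph{and} the refinement step is actually executed on the least-Pareto-stationary point $x^k$ (which, under \cref{ass:order}, is the first point processed). There, $\theta(x^k)=\Theta(X^k)\le-\epsilon$ gives $\|v(x^k)\|^2=-2\theta(x^k)\ge 2\epsilon$, and, exactly as in \cref{prop:big_theta}, the point $z^k=x^k+\alpha_kd_k$ from step \ref{step:z_N} satisfies $F(z^k)\le F(x^k)-\boldsymbol{1}\min\{1,\Gamma_1\}\gamma\alpha_k\|v(x^k)\|^2$ (Armijo condition at step \ref{step:line_search_N} plus $\mathcal{D}(x^k,d_k)\le-\min\{1,\Gamma_1\}\|v(x^k)\|^2$ from step \ref{step:sdr}), while \cref{lemma:lemma_N} yields $y^k\in X^{k+1}$ with $F(y^k)\le F(z^k)<F(x^k)$. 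Applying \cref{lemma:HV} to $X^k$ and $y^k$ (legitimate, since $F(X^k)$ is stable by \cref{lemma:prelims_N} and bounded above by $\bar\zeta$) and using monotonicity of $V_F$ along the insertion, I get
\begin{align*}
	V_F(X^{k+1})-V_F(X^k) &\ge \prod_{j=1}^{m}\bigl(f_j(x^k)-f_j(z^k)\bigr) \ge \bigl(\min\{1,\Gamma_1\}\gamma\alpha_k\|v(x^k)\|^2\bigr)^m \\
	&\ge \bigl(2\gamma\min\{1,\Gamma_1\}\alpha_k\epsilon\bigr)^m .
\end{align*}
The remaining ingredient is the step-size lower bound $\alpha_k\ge\min\{\alpha_0,\Delta_\text{low}\}$ with $\Delta_\text{low}=\Gamma_1(1-\gamma)/(\Gamma_2^2L_\text{max})$: this is the standard Armijo-backtracking estimate under Lipschitz gradient continuity, obtained by feeding the descent lemma $f_j(x^k+\alpha d_k)\le f_j(x^k)+\alpha\nabla f_j(x^k)^\top d_k+\tfrac{L_j}{2}\alpha^2\|d_k\|^2$ into the acceptance test and using $\nabla f_j(x^k)^\top d_k\le\mathcal{D}(x^k,d_k)\le-\min\{1,\Gamma_1\}\|v(x^k)\|^2$ together with $\|d_k\|\le\max\{1,\Gamma_2\}\|v(x^k)\|$ from step \ref{step:sdr} (cf.\ the line-search analyses in \cite{fliege2019complexity,lapucci2024convergence}). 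Substituting, $V_F(X^{k+1})-V_F(X^k)\ge(2\gamma\min\{1,\Gamma_1\}\min\{\alpha_0,\Delta_\text{low}\}\epsilon)^m$.

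Finally I would close with a counting argument: since every increment is nonnegative, each iteration of the above type contributes at least $(2\gamma\min\{1,\Gamma_1\}\min\{\alpha_0,\Delta_\text{low}\}\epsilon)^m$, and the total increment cannot exceed $V^*-V_F(X^0)$, the number of such iterations is at most $\tfrac{V^*-V_F(X^0)}{(2\gamma\min\{1,\Gamma_1\}\min\{\alpha_0,\Delta_\text{low}\}\epsilon)^m}=:k_\text{max}$; this bounds $N_{\text{it}_\epsilon}$, and among the iterate sets $X^0,\dots,X^{k_\text{max}}$ at least one must satisfy $\Theta(X^k)>-\epsilon$, for otherwise the cumulative increment would exceed $V^*-V_F(X^0)$. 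To complete the chain one checks that $\Theta(X^k)\le-\epsilon$ does force the refinement step on $x^k$ (i.e.\ $\theta(x^k)<-\sigma_k$): in case (i) because $\epsilon\ge\sigma=\sigma_k$, and in case (ii) for all $k$ large enough since $\sigma_k\to0$ — the finitely many earlier iterations adding only an $\epsilon$-independent constant, absorbed into the $\mathcal{O}(1/\epsilon^m)$ estimate. The main obstacle is not conceptual, as the potential-function mechanism is already available from \cref{prop:big_theta}; the work lies in pinning down the explicit step-size bound $\Delta_\text{low}$ and in the bookkeeping ensuring that the per-iteration hypervolume gain stays of order $\epsilon^m$ at every relevant iteration.
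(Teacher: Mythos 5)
Your proposal is correct and follows essentially the same route as the paper's proof: hypervolume as a monotone potential bounded by $V^*$, the per-iteration gain $V_F(X^{k+1})-V_F(X^k)\ge\left(2\gamma\min\{1,\Gamma_1\}\min\{\alpha_0,\Delta_\text{low}\}\epsilon\right)^m$ obtained from \cref{ass:order}, \cref{lemma:lemma_N}, \cref{lemma:HV} and the Armijo step-size lower bound, followed by the same counting argument. The only differences are cosmetic: the paper cites \cite[Corollary 5.2]{lapucci2024convergence} for $\alpha_k\ge\min\{\alpha_0,\Delta_\text{low}\}$ where you rederive it, and your explicit remark on why $\Theta(X^k)\le-\epsilon$ forces the refinement step in case (ii) is a point the paper leaves implicit.
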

\begin{proof}
	Similarly as in the proof of \cref{prop:big_theta}, \rev{let $\{x^k\}$ be the sequence such that, for all $k$, $x^k$ is the first point to be processed in the for loop of steps \ref{step:main_for}-\ref{step:end_loop}. By  \cref{ass:order},
	$x^k\in\argmin_{x\in X^k}\theta(x)$.} Let $z^k=x^k+\alpha_kd_k$ be the corresponding point obtained at step \ref{step:z_N}, which is well defined thanks to \cref{ass:order}. We then have
	\begin{equation*}
		F(z^k)\le F(x^k)-\boldsymbol{1}\min\{1,\Gamma_1\}\gamma \alpha_k\|v(x^k)\|^2.
	\end{equation*}
	Repeating the reasoning of the proof of \cref{prop:big_theta},
	we immediately get
	\begin{equation*}
		V_F(X^{k+1})-V_F(X^k)\ge \left(\min\{1,\Gamma_1\}\gamma\alpha_k \|v(x^k)\|^2\right)^m.
	\end{equation*}
	
	By \cite[Corollary 5.2]{lapucci2024convergence}, we are guaranteed that $\alpha_k\ge \min\{\alpha_0,{\Delta_\text{low}}\}$. Moreover, let us assume that for the first $\rev{k_\text{max}}$ iterations we have $\Theta(x^k)\le -\epsilon$ and thus 
	\begin{equation*}
		\|v(x^k)\|^2 = -2\theta(x^k) = -2\Theta(X^k)\ge2\epsilon.
	\end{equation*}
	Then, we obtain
	\begin{equation*}
		V_F(X^{k+1})-V_F(X^k)\ge \left(2\gamma\min\{1,\Gamma_1\}\min\{\alpha_0,{\Delta_\text{low}}\}\epsilon\right)^m.
	\end{equation*}
	
	Since, by \cref{lemma:compact_set}, $X^{k}\subseteq \mathcal{L}(x^0)$ for all $k$, we know that $V_F(X^k)$ is always bounded above by the value $V^*$. We can therefore write
	\begin{align*}
		V^*-V_F(X^0)&\ge V_F(X^{\rev{k_\text{max}}})-V_F(X^0)= \sum_{k=0}^{\rev{k_\text{max}}-1}V_F(X^{k+1})-V_F(X^k)\\&\ge\sum_{k=0}^{\rev{k_\text{max}}-1} \left(2\gamma\min\{1,\Gamma_1\}\min\{\alpha_0,{\Delta_\text{low}}\}\epsilon\right)^m\\& = \rev{k_\text{max}}\left(2\gamma\min\{1,\Gamma_1\}\min\{\alpha_0,{\Delta_\text{low}}\}\epsilon\right)^m.
	\end{align*}
	We finally get that 
	\begin{equation*}
		\rev{k_\text{max}}\le \frac{V^*-V_F(X^0)}{(2\gamma\min\{1,\Gamma_1\}\min\{\alpha_0,{\Delta_\text{low}}\}\epsilon)^m} = \mathcal{O}\left(\frac{1}{\epsilon^m}\right).
	\end{equation*}	
	
	Now, by \cref{prop:big_theta}, we know that both in cases (i) and (ii)  there exists $\bar{k}$ such that $\Theta(X^k)>-\epsilon$ for all $k\ge \bar{k}$. Recalling that $\{V_F(X^k)\}$ is \rev{a nondecreasing} sequence, this time we can write 
	\begin{align*}
		V^*-V_F(X^0)&\ge V_F(X^{\bar{k}})-V_F(X^0)= \sum_{k=0}^{\bar{k}-1}V_F(X^{k+1})-V_F(X^k)
		\\&\ge \sum_{k:\Theta(X^k)\le-\epsilon}^{}V_F(X^{k+1})-V_F(X^k)
		\\&\ge\sum_{k:\Theta(X^k)\le-\epsilon}^{} \left(2\gamma\min\{1,\Gamma_1\}\min\{\alpha_0,{\Delta_\text{low}}\}\epsilon\right)^m
		\\& = N_{\text{it}_\epsilon}\left(2\gamma\min\{1,\Gamma_1\}\min\{\alpha_0,{\Delta_\text{low}}\}\epsilon\right)^m.
	\end{align*}
	We can thus conclude that
	\begin{equation*}
		N_{\text{it}_\epsilon}\le \frac{V^*-V_F(X^0)}{\left(2\gamma\min\{1,\Gamma_1\}\min\{\alpha_0,{\Delta_\text{low}}\}\epsilon\right)^m},
	\end{equation*}
	\rev{which completes the proof.}
\end{proof}
{The interesting aspect with the above result is that complexity increases with the number of objectives of the problem. This appears reasonable, as we are not considering sequences of points, but set of points related to the Pareto front, i.e., a typically $(m-1)$-dimensional object. Also, notice that an $\mathcal{O}(\frac{1}{\epsilon^m})$ complexity result for $\Theta$ corresponds to an $\mathcal{O}(\frac{1}{\epsilon^{2m}})$ bound for the values of $\|v(x)\|$ of points in $X^k$. This is in line with $\mathcal{O}(\frac{1}{\epsilon^2})$ bounds on $\|\nabla f(x^k)\|$ typical of single objective ($m=1$) optimization.}

\begin{remark}
	\label{rem:finite_ref}
	The above theoretical results lead to some very insightful observations concerning the behavior of the algorithm in the computationally reasonable case where $\sigma_k$ is set to a nonzero constant $\sigma>0$. Indeed, point (i) of \cref{prop:big_theta} guarantees us that, if we set a tolerance on Pareto-stationarity of the points in the list of solutions, not only the algorithm at some point will stop carrying out refinement steps, but also that exploration steps will always provide points that are, straight away, sufficiently stationary according to the predefined threshold. \cref{prop:complexity_set} then tells us that the number of iterations where refinement steps are carried out is bounded by an $\mathcal{O}(\frac{1}{\sigma^m})$ quantity.
\end{remark}

\section{Computational Experiments}
\label{sec::computational_experiments}
In this section, we report the results of thorough computational experiments where we tested possible exemplars of the \textit{Front Descent} (FD) method, as well as some state-of-the-art approaches. All the code for the experiments was developed in \texttt{Python3}\footnote{The code of the proposed approaches is available at \href{https://github.com/pierlumanzu/fd_framework}{github.com/pierlumanzu/fd\_framework}.}; the experiments were run on a computer with the following characteristics: Ubuntu 22.04, Intel Xeon Processor E5-2430 v2 6 cores 2.50 GHz, 32 GB RAM. When a solver is needed to handle subproblems of the form \eqref{eq:newt-type-dirs}-\eqref{eq:bb_direction}, we employ \texttt{Gurobi} optimizer (version 10).

We considered the following types of direction in the refinement step of \cref{alg::F-Newton}: steepest common descent direction (FD-SD) \cite{lapucci2023improved}, Newton direction (FD-N) \cite{gonccalves2022globally}, limited memory Quasi-Newton direction (FD-LMQN) \cite{lapucci2023limited} and Barzilai-Borwein direction (FD-BB) \cite{chen2023barzilai}. For all variants of FD, we employed the following experimental setting: $\alpha_0=1$, $\delta=0.5$, $\gamma=10^{-4}$, $\Gamma_1=10^{-2}$, $\Gamma_2=10^2$, $\sigma_k=10^{-7}$ for all $k$. We also employed a heuristic based on crowding distance \cite{deb2002fast} to avoid the generation of too many close points. Using the same mechanism as in \cite{gonccalves2022globally}, at each iteration of FD-N we forced the eigenvalues of the Hessian matrices to be greater than $\rho = 10^{-2}$ so as to have the matrices $B_j$ definite positive for all $j$. In FD-LMQN we considered memory size equal to $5$, while in FD-BB the scalars $a_j$ used to rescale the gradients were bounded in $[10^{-3}, 10^{3}]$. 

As for the state-of-the-art approaches for Pareto front reconstruction in continuous unconstrained MOO, we considered the \rev{very popular evolutionary method NSGA-II \cite{deb2002fast}, the convergent derivative-free DMulti-MADS method \cite{Bigeon2021} (abbreviated here as DM-MADS) and the derivative-based MO Trust-Region approach (MOTR) \cite{mohammadi2024trust}}. For \rev{the three} algorithms, we set the parameters values as indicated in their respective papers. Since NSGA-II is non-deterministic, it was run 5 times with 5 different seeds for the pseudo-random number generator: only the best execution (chosen based on the \textit{Purity} metric \cite{custodio2011direct}) was considered for comparisons.

All algorithms were evaluated on a collection of unconstrained problems, both convex (JOS\_1 \cite{jin2001dynamic,lapucci2023limited}, the rescaled version of MAN\_1 \cite{lapucci2023memetic} proposed in \cite{lapucci2023limited}, SLC\_2 \cite{schutze2011directed}, MOP\_7 \cite{huband06}) and non-convex (MMR\_5 \cite{MIGLIERINA2008662}, the rescaled version of MOP\_2 \cite{huband06} reported in \cite{lapucci2023limited}, MOP\_3 \cite{huband06}, CEC09\_1, CEC09\_2, CEC09\_3, CEC09\_7, CEC09\_8, CEC09\_10 \cite{zhang2008multiobjective}). Most of the problems are bi-objective, while a small portion (MOP\_7, CEC09\_8, CEC09\_10) presents three objective functions; we refer the reader to the cited papers for more details on the formulations. \rev{We} varied the number of variables \rev{in the following way:}
\rev{\begin{itemize}
		\item JOS\_1, MAN\_1, SLC\_2, MMR\_5, MOP\_2: $n \in  \{2, 3, 4, 5, 6, 8, 10, 12, 15, 17,\allowbreak 20, 25, 30, 35, 40, 45, 50, 100, 200\}$;
		\item CEC09\_1, CEC09\_2, CEC09\_3, CEC09\_7: $n \in  \{4, 5, 6, 8, 10, 12, 15, 17, 20,\allowbreak 25, 30, 35, 40, 45, 50, 100, 200\}$;
		\item CEC09\_8, CEC09\_10: $n \in  \{5, 6, 8, 10, 12, 15, 17, 20, 25, 30, 35, 40, 45, 50, 100,\allowbreak 200\}$;
		\item MOP\_3, MOP\_7: $n = 2$.
\end{itemize}}
For each problem, \rev{the selection of starting solutions was deterministic:} $n$ points were uniformly sampled as starting solutions from the hyper-diagonal of a hyperbox, defined by lower and upper bounds suggested in the reference paper of each problem.

Finally, in order to summarize the results of the experiments, we made use of \textit{performance profiles} \cite{dolan2002benchmarking}. 
We mainly considered standard metrics from MOO literature: \textit{Purity}, \textit{$\Gamma$--spread}, \textit{$\Delta$--spread} \cite{custodio2011direct} and \textit{Hyper-volume} \cite{zitzler98}. \rev{For computing \textit{Purity} and \textit{Spreads}, a reference Pareto set is obtained combining the Pareto sets returned by all the solvers on a problem instance and removing the dominated solutions. For \textit{Hyper-volume} computation, on the other hand, the reference point $r_p$ is selected according to \cite{bras2020use} so that for all $i=1,\ldots,m$, $(r_p)_i = \max_{x\in {X}_\text{all}} f_i(x) + 0.01$, where $X_\text{all}$ is the union of all solutions sets returned by the considered algorithms.} Since \textit{Purity} and \textit{Hyper-volume} have higher values for better performance, they were pre-processed before being used for performance profiles; for \textit{Purity}, we considered the inverse of the obtained values. For the \textit{Hyper-volume}, we profiled the value $\rev{V_{\mathcal{R}}}-V_{\text{solver}}+\eta$, where $\rev{V_{\mathcal{R}}}$ is the hypervolume of the reference \rev{Pareto set} on a problem instance and $\eta=10^{-7}$ is for numerical reasons.  

\subsection{Preliminary Assessment of FD Methods Properties}
\label{subsec::prel_assess}

First, we evaluate the behavior of the Front Descent algorithm on two selected problems: MAN\_1 with $n=20$ and CEC09\_2 with $n=10$. In \cref{tab::MAN}, we analyze the progress of FD-SD on the two problems. 
We can note that, as anticipated in \cref{rem:finite_ref}, setting a tolerance $\sigma > 0$, after a problem-specific number of iterations FD-SD stopped carrying out refinement steps\rev{. Let us consider the case of MAN\_1. At iteration 60, no refinement step is carried out. However, there are iterations between the 60th and the 70th where some point is refined, as indicated in the row for iteration 70 by the value of ``(last)'', which shows that the last refinement occurred one iteration earlier. The value of ``(last)'' is not reset in subsequent rows, suggesting that iteration 69 is the final iteration where refinement is performed. A similar analysis can be applied to the CEC09\_2 problem, where the last refinement step appears to occur at iteration 148. From iteration 69 for MAN\_1 and iteration 148 for CEC09\_2} onward, each point generated by an exploration step is $\sigma$-Pareto-stationary.

\begin{table}
	\centering
	\footnotesize
	\caption{Details on the execution of FD-SD on MAN\_1 problem ($n=20$) and CEC09\_2 problem ($n=10$) with $\sigma_k=\sigma = 0.05$ for all $k$. For each considered iteration, we report: the number of initial solutions $|X^k|$ and the percentage of $\sigma$-Pareto-stationary points among them; the number $n_r$ of refinement steps performed during the iteration and the number of iterations passed since any refinement step was done; the number $n_e$ of exploration steps and the percentage of points produced in this phase that are $\sigma$-Pareto-stationary already; the number of points in the next iterate set $|X^{k + 1}|$.}
	\label{tab::MAN}
	\tiny
	\setlength\tabcolsep{2.5pt}
	\renewcommand{\arraystretch}{1.3}%
	\begin{tabular}{|c||c|c|c|c|}%
		\hline%
		\multicolumn{5}{|c|}{MAN\_1, $n = $ 20}\\
		\hline
		$k$&$|X^k|$ (\% stat.)&$n_r$ (last)&$n_e$ (\% stat.)&$|X^{k + 1}|$\\%
		\hline%
		\hline%
		1&10 (0)&8 (0)&4 (25)&12\\%
		\hline%
		10&40 (55)&16 (0)&5 (20)&43\\%
		\hline%
		20&102 (97)&3 (0)&12 (91)&114\\%
		\hline%
		30&259 (99)&2 (0)&27 (92)&280\\%
		\hline%
		40&488 (99)&2 (0)&48 (85)&510\\%
		\hline%
		50&778 (99)&2 (0)&77 (92)&828\\%
		\hline%
		60&1402 (100)&0 (1)&140 (100)&1479\\%
		\hline%
		70&2403 (100)&0 (1)&235 (100)&2551\\%
		\hline%
		80&4133 (100)&0 (11)&394 (100)&4361\\%
		\hline%
		90&6967 (100)&0 (21)&676 (100)&7350\\%
		\hline%
		100&11163 (100)&0 (31)&1054 (100)&11680\\%
		\hline%
	\end{tabular}%
	$\quad$
	\begin{tabular}{|c||c|c|c|c|}%
		\hline%
		\multicolumn{5}{|c|}{CEC09\_2, $n = $ 10}\\
		\hline
		$k$&$|X^k|$ (\% stat.)&$n_r$ (last)&$n_e$ (\% stat.)&$|X^{k + 1}|$\\%
		\hline%
		\hline%
		1&2 (50)&1 (0)&4 (25)&5\\%
		\hline%
		20&99 (98)&1 (0)&11 (90)&107\\%
		\hline%
		40&176 (98)&2 (0)&16 (87)&187\\%
		\hline%
		60&248 (99)&1 (0)&15 (93)&254\\%
		\hline%
		80&440 (99)&2 (0)&28 (92)&447\\%
		\hline%
		100&784 (100)&0 (5)&46 (100)&789\\%
		\hline%
		120&1150 (100)&0 (25)&79 (100)&1185\\%
		\hline%
		140&1325 (100)&0 (4)&94 (100)&1304\\%
		\hline%
		160&1437 (100)&0 (12)&101 (100)&1427\\%
		\hline%
		180&1753 (100)&0 (32)&133 (100)&1802\\%
		\hline%
		200&2015 (100)&0 (52)&140 (100)&2030\\%
		\hline%
	\end{tabular}
\end{table}

In such scenario, we can employ the hyper-volume based stopping condition proposed in \cref{rem:hypervol}. In \cref{fig::MAN_CEC}, we show the Pareto front reconstructions obtained by FD-SD on the two problems with different values for the \textit{hyper-volume} improvement threshold $\varepsilon_{hv}$ in \eqref{eq::hypervolume}. We conclude that properly setting $\varepsilon_{hv}$ allows us to control the quality of the generated Pareto front: in particular, employing smaller values for $\varepsilon_{hv}$ led us to more accurate Pareto front reconstructions, at the expense, reasonably, of an increased computational cost. This behavior is also well observed in the performance profiles (\cref{fig::PP_hypervolume}) on a wider set of bi-objective problems.

\begin{figure}
	\centering
	\subfloat{\includegraphics[width=0.45\textwidth]{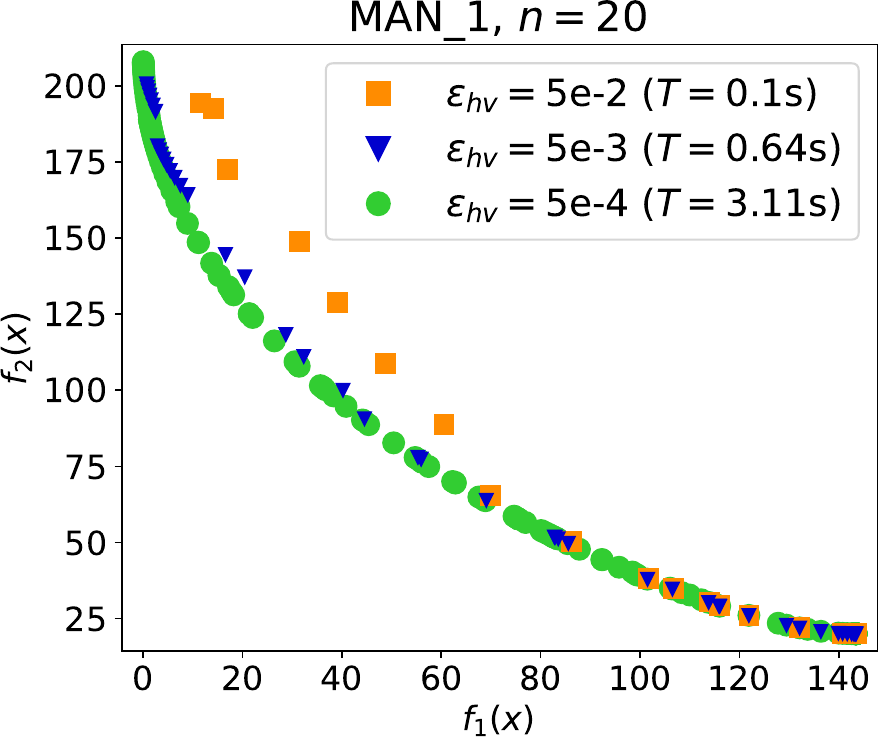}}
	\hfil
	\subfloat{\includegraphics[width=0.45\textwidth]{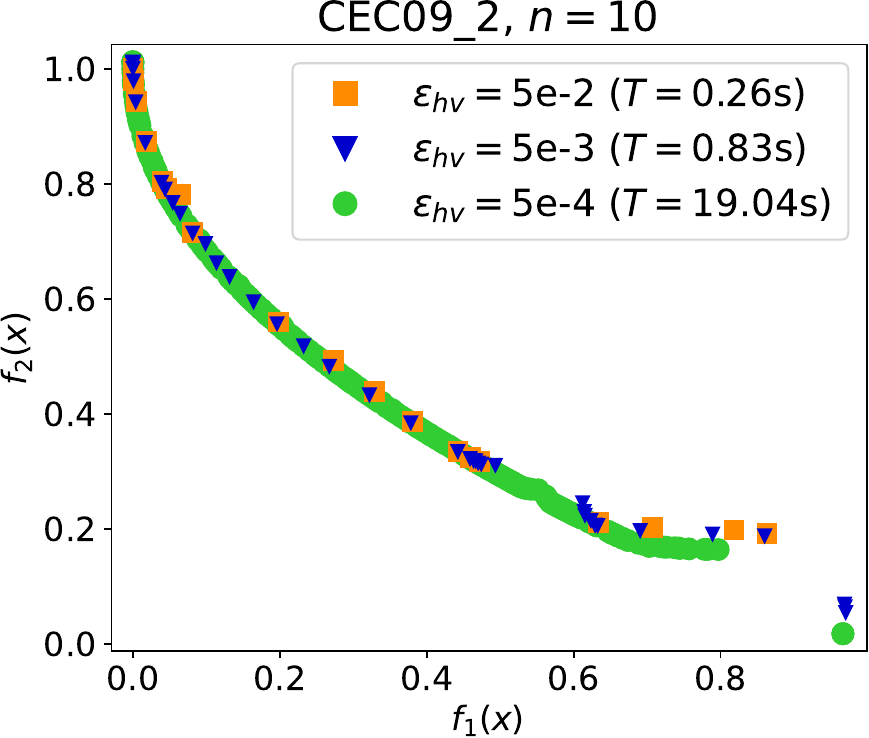}}
	\caption{Pareto front reconstructions by FD-SD on MAN\_1 ($n=20$) and CEC09\_2 ($n=10$) problems with different values for the \textit{Hyper-volume} improvement threshold $\varepsilon_{hv}$ (\cref{rem:hypervol}). For each execution, the runtime $T$ is reported.}
	\label{fig::MAN_CEC}
\end{figure}

\begin{figure}
	\centering
	\subfloat{\includegraphics[width=0.32\textwidth]{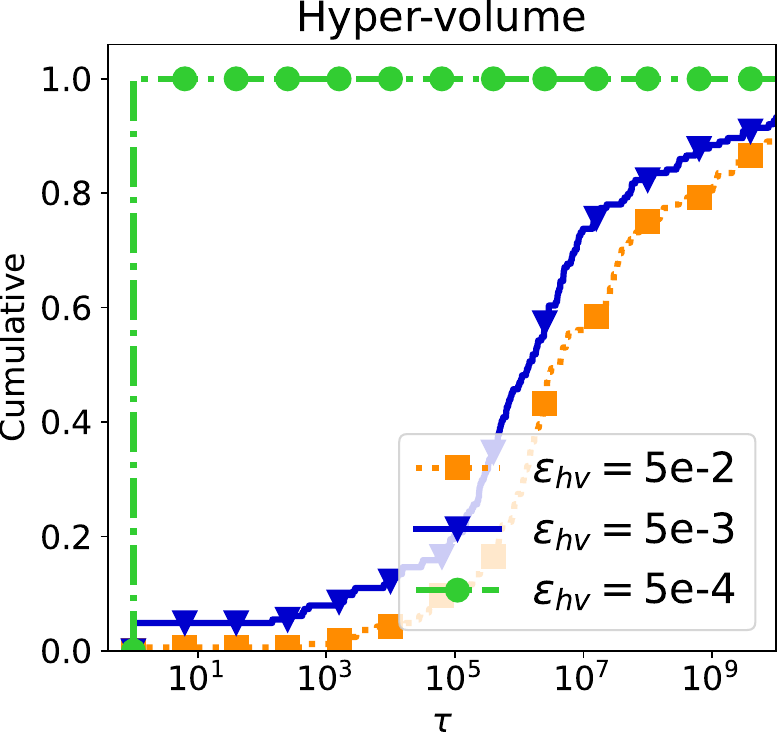}}
	\hfil
	\subfloat{\includegraphics[width=0.32\textwidth]{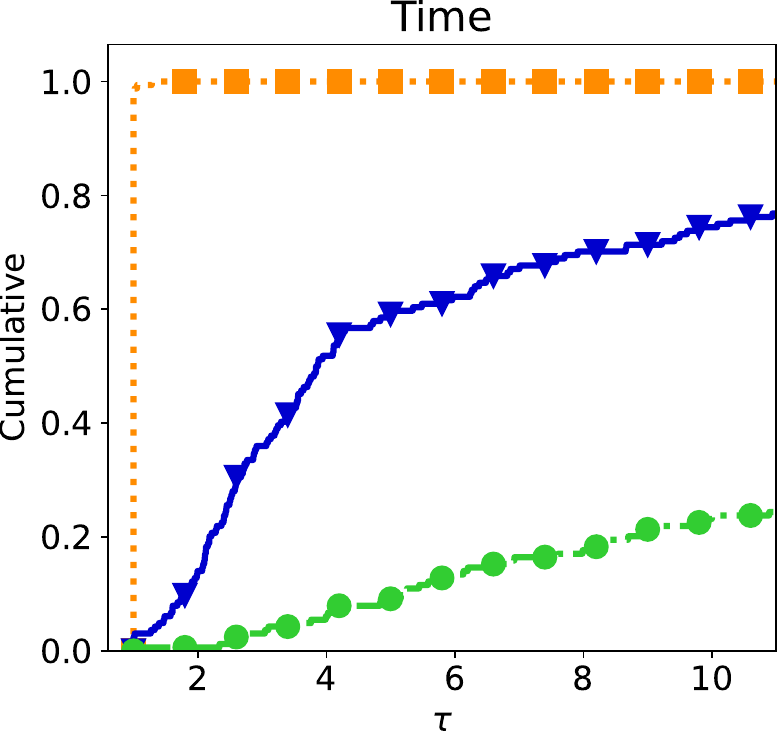}}
	\caption{Performance profiles w.r.t.\ \textit{Hyper-volume} and \textit{Time} for FD-SD on the bi-objective optimization problems listed in \cref{sec::computational_experiments}. The axes were set for a better visualization of the results.}
	\label{fig::PP_hypervolume}
\end{figure}

\subsection{Overall Comparison}

After assessing the properties typical of FD methods, we now compare all the FD variants on the set of bi-objective optimization problems listed in \cref{sec::computational_experiments}. Given the results in \cref{subsec::prel_assess}, here we considered $\varepsilon_{hv} = 5\times10^{-4}$ for all the algorithms. In \cref{fig::PP_FD}, we show the performance profiles w.r.t.\ \textit{Purity}, \textit{Time}, \textit{Hyper-volume} and \textit{Spread} metrics. As for the first metric, we observe that the employment of Newton-type and Barzilai-Borwein directions in the refinement phase actually helps to improve the quality of the Pareto front reconstruction w.r.t.\ the classical use of steepest descent direction. However, except for FD-BB, the use of additional information on the Newton-type directions led to an higher consumption of resources, as can be observed in the performance profiles w.r.t.\ \textit{Time}. Regarding the \textit{Hyper-volume} and $\Gamma$\textit{--spread} metrics, we have that FD-N was the best, with FD-BB having a similar performance, especially in robustness. As for $\Delta$\textit{--spread}, all the methodologies worked equally well.

\begin{figure}
	\centering
	\subfloat{\includegraphics[width=0.32\textwidth]{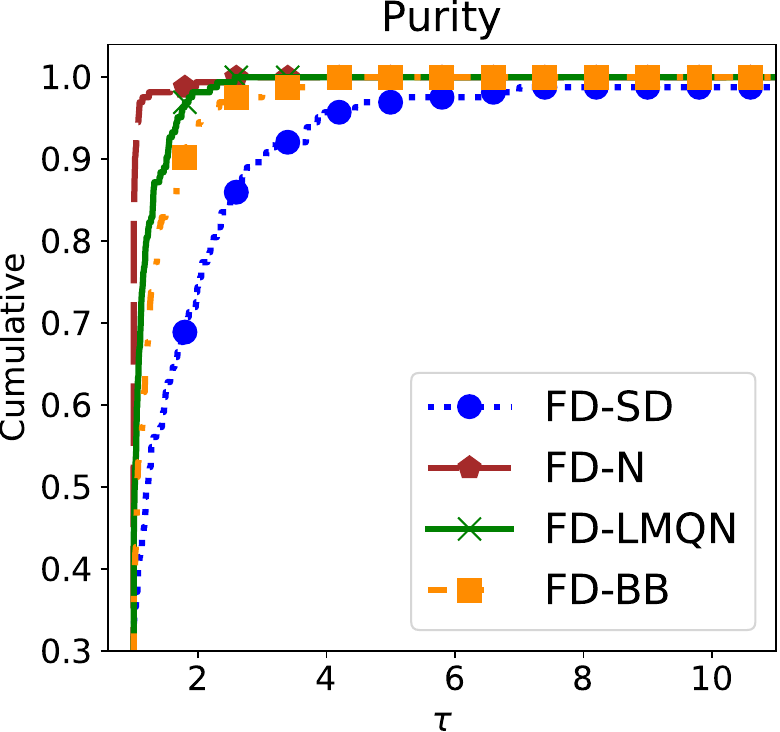}}
	\hfil
	\subfloat{\includegraphics[width=0.32\textwidth]{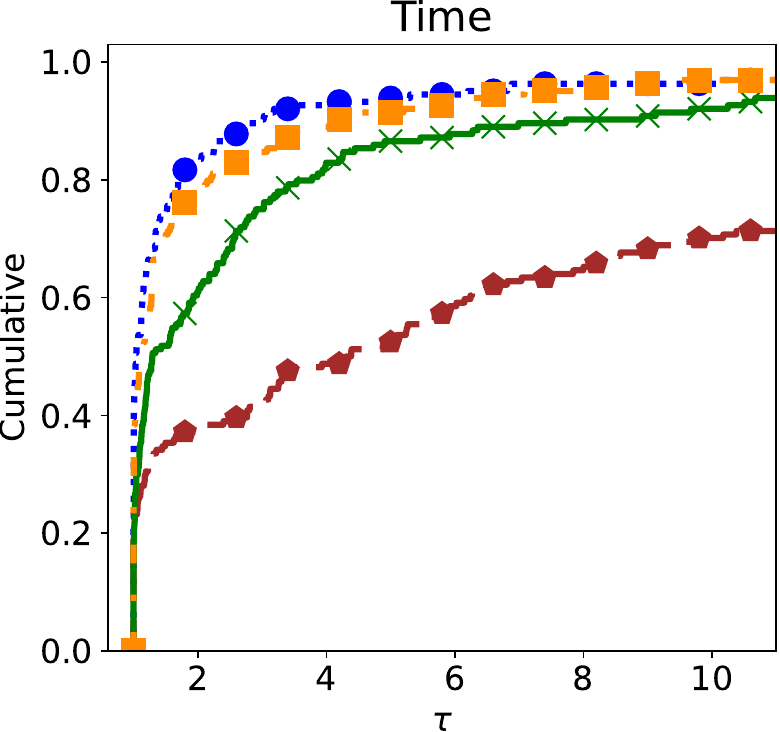}}
	\hfil
	\subfloat{\includegraphics[width=0.32\textwidth]{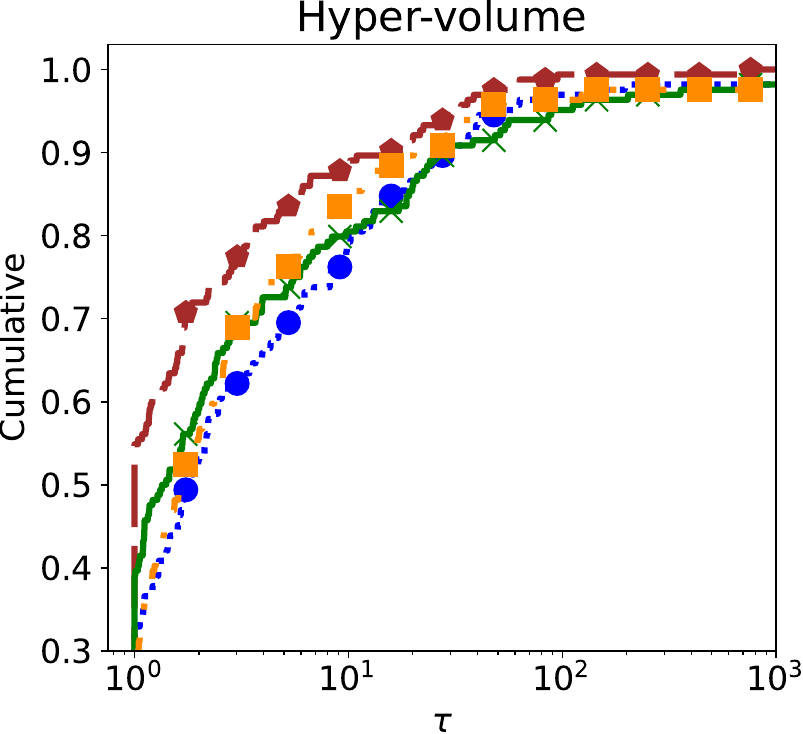}}
	\\
	\subfloat{\includegraphics[width=0.32\textwidth]{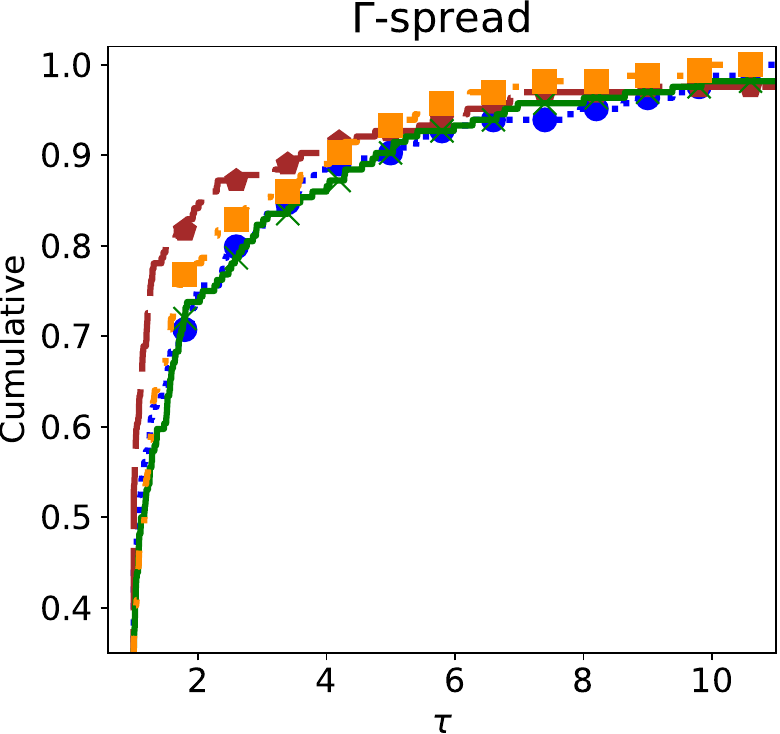}}
	\hfil
	\subfloat{\includegraphics[width=0.32\textwidth]{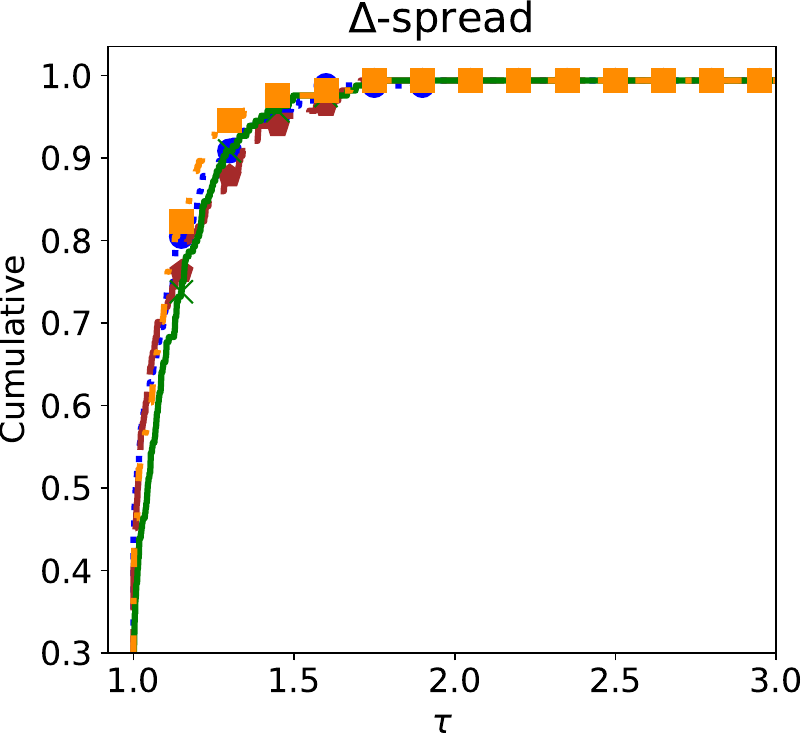}}
	\caption{Performance profiles w.r.t.\ \textit{Purity}, \textit{Time}, \textit{Hyper-volume} and \textit{Spread} metrics for FD-SD, FD-N, FD-LMQN, FD-BB on the bi-objective optimization problems listed in \cref{sec::computational_experiments}. The axes were set for a better visualization of the results.}
	\label{fig::PP_FD}
\end{figure}

For the last comparison, we then decided to use FD-BB, being \rev{competitive in terms of both effectiveness and efficiency}. In the comparisons with NSGA-II\rev{,} MOTR \rev{and DM-MADS} (\cref{fig::PP_state}), we \rev{initially} run all the algorithms with a time limit of 2 minutes on each problem, being the best stopping criterion to compare such structurally different approaches (of course, any additional stopping condition indicating that an algorithm cannot improve the set of solutions anymore was considered). FD-BB was the clear winner on both \textit{Purity} and \textit{Hyper-volume}. \rev{The very poor performance of DM-MADS is arguably explained by the high dimensionality of most of the tested problems: derivative-free methods like DM-MADS -- relying on a set of directions that positively span $\mathbb{R}^n$ to perform search steps at each iteration -- often tend to struggle in this kind of settings (also see, e.g., the discussion in \cite{lapucci2023memetic}).} Regarding the $\Gamma$\textit{--spread}, the proposed approach obtained a similar performance as MOTR on effectiveness and appeared to be the most robust algorithm; on $\Delta$\textit{--spread}, all the algorithms again performed equally well. \rev{Finally, we conducted the same experiments using time limits of 30 seconds and 5 minutes to investigate the sensitivity of our approach performance w.r.t.\ the time budget. The results, presented as performance profiles with respect to \textit{Hyper-volume} in \cref{fig::PP_state_HV_30s_5m}, show that, despite some negligible variations, our method consistently outperformed the others across all three time settings.}

\begin{figure}
	\centering
	\subfloat{\includegraphics[width=0.24\textwidth]{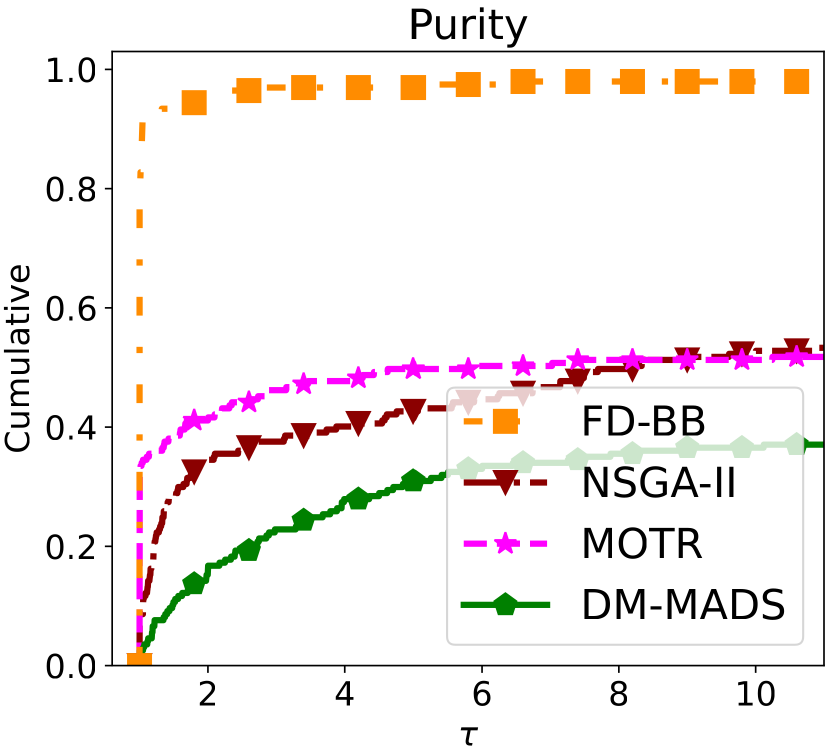}}
	\hfil
	\subfloat{\includegraphics[width=0.24\textwidth]{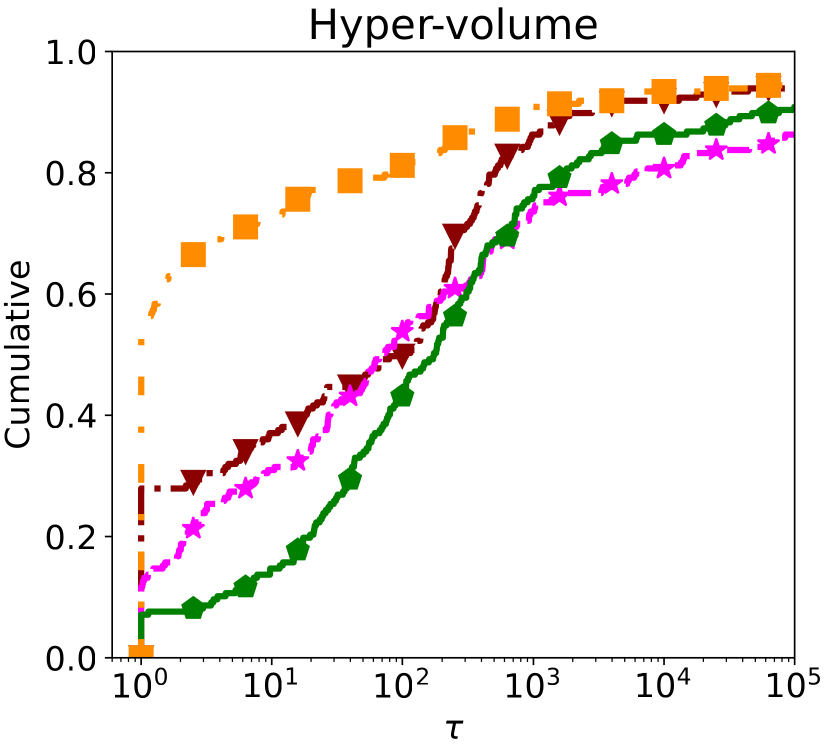}}
	\subfloat{\includegraphics[width=0.24\textwidth]{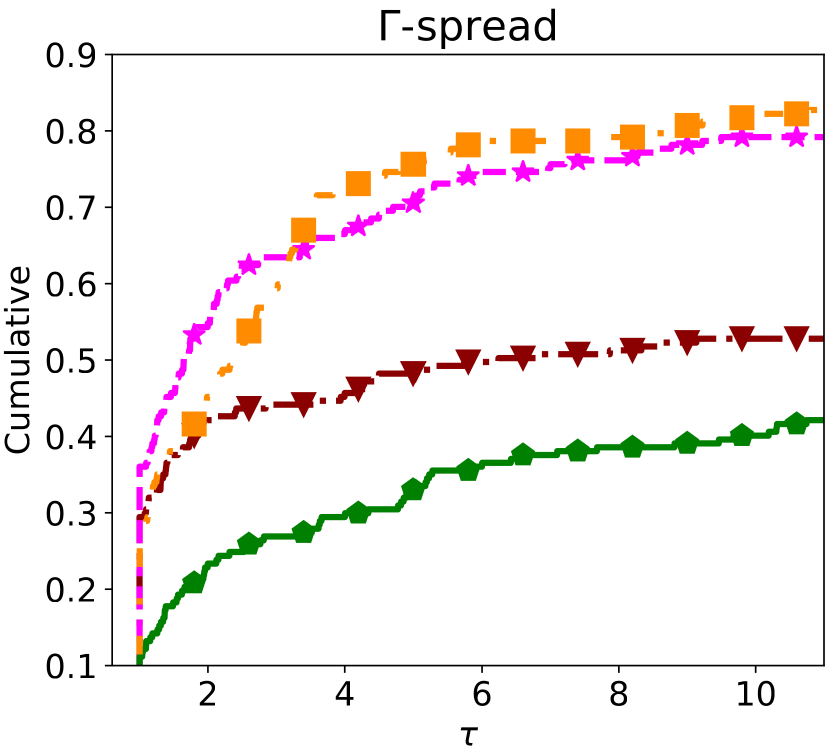}}
	\hfil
	\subfloat{\includegraphics[width=0.24\textwidth]{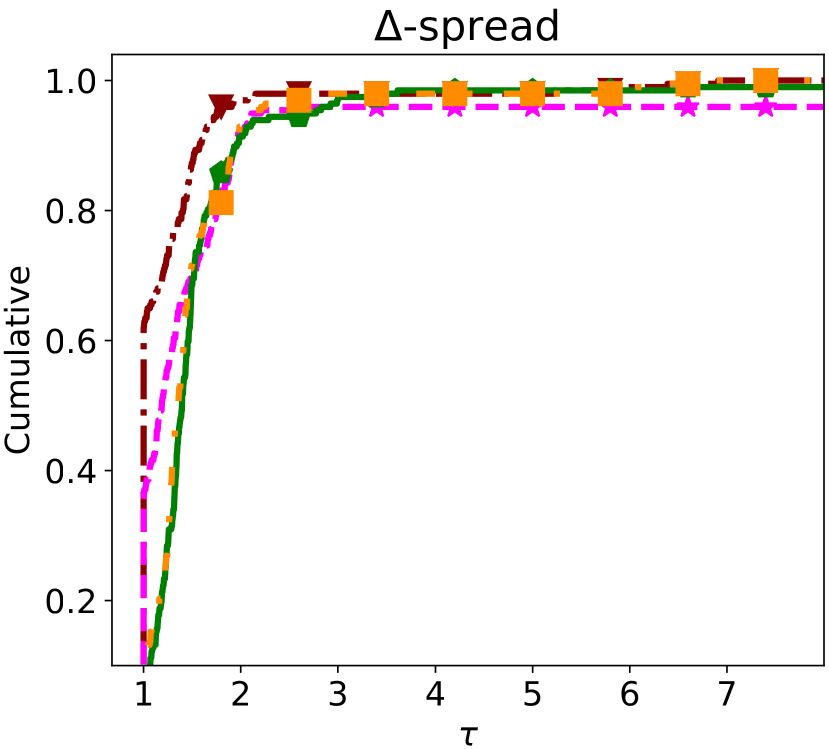}}
	\caption{Performance profiles w.r.t.\ \textit{Purity}, \textit{Hyper-volume} and \textit{Spread} metrics for FD-BB, NSGA-II\rev{,} MOTR \rev{and DM-MADS} on all the problems listed in \cref{sec::computational_experiments}. The axes were set for a better visualization of the results.}
	\label{fig::PP_state}
\end{figure}

\begin{figure}
	\centering
	\subfloat[\rev{Time limit of 30s}]{\includegraphics[width=0.24\textwidth]{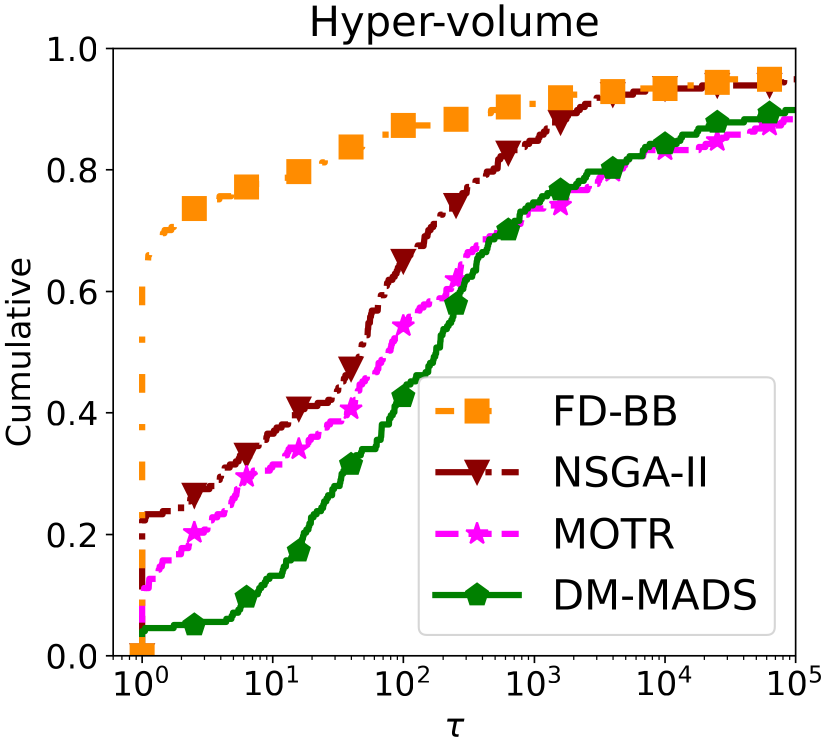}}
	\hfil
	\subfloat[\rev{Time limit of 2m}]{\includegraphics[width=0.24\textwidth]{img/All_profiles_HV}}
	\hfil
	\subfloat[\rev{Time limit of 5m}]{\includegraphics[width=0.24\textwidth]{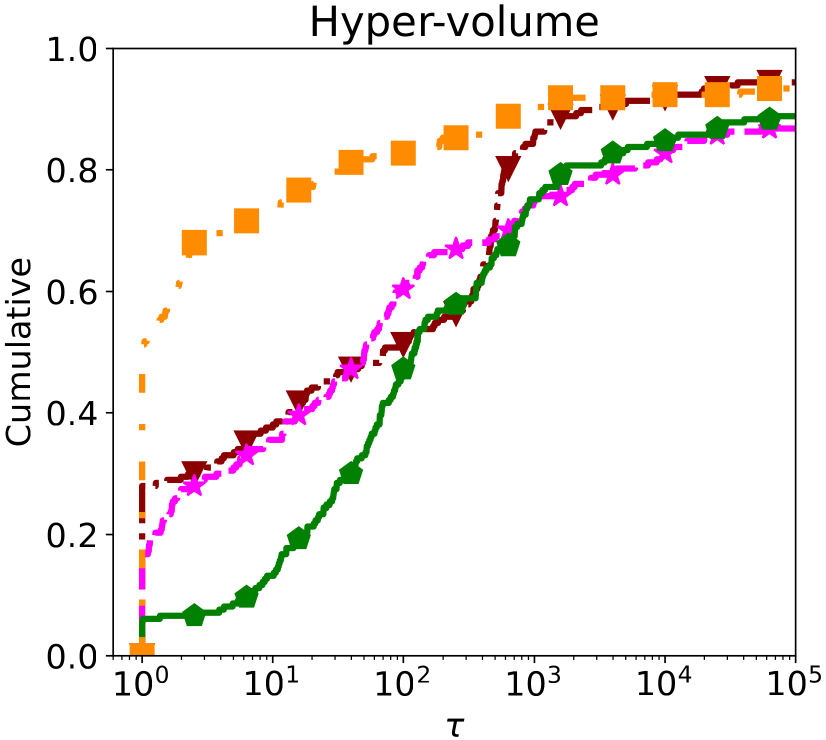}}
	\caption{\rev{Performance profiles w.r.t.\ \textit{Hyper-volume} for FD-BB, NSGA-II, MOTR and DM-MADS on all the problems listed in \cref{sec::computational_experiments}, run with different time limits. The axes were set for a better visualization of the results.}}
	\label{fig::PP_state_HV_30s_5m}
\end{figure}

\section{Conclusions}
\label{sec:conclusions}
In this paper, we introduced the class of Front Descent algorithms for Pareto front reconstruction in smooth multi-objective optimization.
For this general class of algorithms, we provided an insightful characterization of its mechanisms, a thorough theoretical analysis of convergence, with some innovative results concerning the sequence of iterate sets, and the experimental evidence of its superiority w.r.t.\ methods from the state-of-the-art. 

The theoretical analysis somehow sheds some light on the strong performance that the algorithm exhibits in terms of the purity metric. Interesting future research might focus on some formal investigation regarding the observed ability of the algorithm to effectively spread the Pareto front - or at least some of its parts in the nonconvex setting. 
Other research could of course be focused also on adaptations of the framework to the constrained setting.



\bibliographystyle{siamplain}

\end{document}